\theoremstyle{plain}
\newtheorem{theorem}{Theorem}
\newtheorem{proposition}{Proposition}
\newtheorem{lemma}{Lemma}
\newtheorem{remark}{Remark}
\newtheorem{corollary}{Corollary}
\newtheorem{definition}{Definition}
\numberwithin{equation}{section}
\newcommand{\vphi}{\varphi}
\newcommand{\fin}{\hfill$\square$}
\def \RR {{\mathbb R}}
\def \ZZ {{\mathbb Z}}
\def \NN {{\mathbb N}}
\def \cO {{\mathcal O}}
\def \cF {{\mathcal F}}
\def \cG {{\mathcal G}}
\def\cA{{\mathcal A}}
\def\cC{{\mathcal C}}
\def\cH{{\mathcal H}}
\def\-{{\setminus}}
\def\vphi{\varphi}
\begin{document}

\title[Nil-Anosov actions]
      {Nil-Anosov actions}

\author{Thierry Barbot}
\author{Carlos Maquera}
\thanks{The first author thanks the FAPESP and the French consulate in the state of S\~{a}o Paulo (Brazil) for the
financial support grant ``Chaires franco-br\'esiliennes dans l'\'etat de S\~{a}o Paulo".\\
The second author would like to thank CNPq of Brazil for financial
support Grant 200464/2007-8}

\keywords{Anosov action, compact orbit, closing lemma, transitivity,
Anosov flow.}

\subjclass[2000]{Primary: 37C85}

\date{\today}

\address{
Thierry Barbot\\
LMA, Avignon University\\
33 rue Louis Pasteur\\
84000 Avignon\\
France}
 \email{thierry.barbot@univ-avignon.fr}

\address{
Carlos Maquera\\
Universidade de S{\~a}o Paulo - S{\~a}o Carlos \\Instituto de
ci{\^e}ncias
matem{\'a}ticas e de Computa\c{c}{\~a}o\\
Av. do Trabalhador S{\~a}o-Carlense 400 \\
13560-970 S{\~a}o Carlos, SP\\
Brazil}

 \email{cmaquera@icmc.usp.br}

 \begin{abstract}
We consider Anosov actions of a Lie group $G$ of dimension $k$ on a closed manifold of dimension $k+n.$
We introduce the notion of Nil-Anosov action of $G$ (which includes the case where $G$ is nilpotent) and establishes the invariance by the entire group $G$
of the associated stable and unstable foliations. We then prove a spectral decomposition Theorem
for such an action when the group $G$ is nilpotent. Finally, we focus on the case where $G$ is nilpotent and
the unstable bundle has codimension one. We prove that in this case the action is a Nil-extension
over an Anosov action of an abelian Lie group. In particular:
\begin{itemize}
\item if $n \geq 3,$ then the action is topologically transitive,
\item if $n=2,$ then the action is a Nil-extension over an Anosov flow.
\end{itemize}
 \end{abstract}

 \maketitle

 \medskip
 \medskip

 \thispagestyle{empty}

%%%%%%%%%%%%%%%%%%%%%%%%%%%%%%%%%%%%%%%%
%%%%%%%%%%%%%%%%%%%%%%%%%%%%%%%%%%%%%%%%
%\tableofcontents

 \section{Introduction}
 A locally free action $\phi$ of a group $G$ on a closed manifold $M$ is said to be \textit{Anosov} if there exists  $\alpha$ in $\mathcal{G}$, the Lie algebra of $G$, such that  $g:=\phi(\exp \alpha,\cdot)$ is normally hyperbolically with respect to the orbit foliation.
 In our serie of papers, we focus on the case of codimension one Anosov actions, i.e. Anosov actions admitting a normally hyperbolic element
 for which the unstable direction has dimension one. This notion is very classical when the
 group $G$ is $\RR,$ i.e. in the case of Anosov flows. Until recently, there have been many works establishing what can be the behavior of a codimension one
 Anosov flow in a given manifold, even aiming to describe the Anosov flow when the ambient manifold is prescribed, with emphasis in dimension $3$ (see \cite{asaoka, barbot1, barbot2, barbot3, bafe1, bafe2, BBB1, BBB2, BL, Fe1, Fe2, franks, FrWi, ghys, ghys2, ghys3, matsumoto, simic1, verjovsky}).

 In our first paper \cite{paper1}, we intend to extend this analysis in the case where $G$ is still abelian, but of bigger dimension.
 We prove in particular that if the dimension of the ambient manifold exceeds by at least $3$ the dimension of the acting abelian group,
 then the action is topologically transitive. We also proposed the following conjecture, natural extension of a previous well-known conjecture:

  \vspace{.5cm}
 \noindent
 \textbf{Generalized Verjovsky Conjecture.}
 \textit{Let $G$ be an abelian Lie group of dimension $k.$ Every codimension one faithful Anosov action of $G$ on a manifold of dimension $\geq k+3$
 is topologically conjugate the suspension of an Anosov action of $\ZZ^k$ on a closed manifold. }

 \vspace{.5cm}

 Meanwhile, V. Arakawa completely elucidated in his Ph. D Thesis the case of Anosov actions of $\RR^k$ on a closed manifold of dimension $k+2:$
 such an action reduces through a flat bundle to an Anosov flow on a $3$-manifold (\cite{arakawa}).

Afterwards, in our work \cite{algebrique} aiming to generalize Tomter's classification of \textit{algebraic Anosov flows} (\cite{tomter1, tomter2} in the context of actions of abelian groups of higher dimension, it became clear  that the natural setting was the case where the group $G$ is nilpotent. In particular,
the Generalized Verjovsky Conjecture above can be replaced by the similar statement where the group $G$ is nilpotent, not necessarily abelian. Actually,
it follows from the present paper that a nilpotent Lie group admitting a faithful Anosov action of codimension one on a closed manifold is necessarily
abelian (Theorem \ref{thm:abelian}).

We therefore started the analysis of Anosov actions of non-abelian Lie groups, which is the topic of the present paper.
The key point one needs in order to undertake such an analysis is that the Anosov action is good in the sense of \cite{tavares}, i.e. that
the Anosov splitting associated to a partially hyperbolic element
should be preserved by the other elements of $G.$ It is for this reason that previously most authors only considered the case where
the partially hyperbolic element lies in the center of $G,$
for which this condition is obviously satisfied. Here, we observe that this property still hold if $G$ is only nilpotent\footnote{In \cite{hirsch}, M.W. Hirsch already made this observation, but never afterwards published a paper including the proofs of the results stated in \cite{hirsch}.}. Actually,
we point out a more general phenomenon: in order to have the $G$-invariance of the Anosov splitting, we only need the fact that the
partially hyperbolic element belongs to the nilradical of $G:$ the Anosov action is then called \textit{Nil-Anosov} (see Theorem \ref{thm:inv_foliation}).

However, the study of Nil-Anosov actions is limited by the fact that the density of periodic orbits in the nonwandering set may fail in this more
general situation. It is for this reason that we quickly restrict ourselves to the case of nilpotent Lie groups. Nevertheless, we
consider that the study of Nil-Anosov actions is interesting in itself.

In section \ref{sec:nilpotent}, we prove a spectral decomposition Theorem for Anosov actions of nilpotent Lie groups:
the nonwandering set of such an action can be decomposed in a finite number of basic sets,
each of them being topologically transitive  (Theorem \ref{thm:espdecomp}). Observe that such a result is not merely
an adaptation of the arguments involved in the case of diffeomorphisms or flows: the notion of nonwandering set is itself
delicate and our analysis involves for example the fact that the nonwandering set of any element of $G$ coincide with
the nonwandering set of the entire group $G$ (Remark \ref{rk:omegaegal}).

One of the most important result in section \ref{sec:nilpotent} is the non-trivial fact when the Anosov action is faithful,
then the holonomy representations of each leaf of the weak foliations are faithful (Proposition \ref{le:trivial}).

This fact is the principal element of the proof of Theorem \ref{thm:abelian} mentioned above: the study of codimension one actions
of nilpotent Lie groups reduces to the case of abelian Lie groups. Therefore, the topological transitivity of these actions
in higher dimensions follows from \cite{paper1}, but here we propose an alternative simpler proof of this fact (see Theorem \ref{th:transitif}).
Finally, we conclude with an alternative and much simpler proof of Arakawa's Theorem: Anosov actions of a nilpotent Lie group of dimension $k$
on a manifold of dimension $k+2$ is a Nil-extension of an Anosov flow on a $3$-manifold.

 \vspace{.5cm}
 \textbf{Acknowledgments.} This paper was partly written while the second author stayed at Laboratoire de
de Math\'ematiques, Universit\'e d'Avignon et des Pays de Vaucluse. It has been concluded while the second author was
visiting the University of Campinas (UNICAMP) and supported by the french-brazilian program ``Chaires franco-br\'esiliennes dans l'\'etat
de S\~{a}o Paulo". Pr. R\'egis Var\~{a}o contributed to the conclusion of the paper, mainly in the elaboration of Theorem \ref{thm:3}.

\section{Preliminaries}
\label{sec.preli}

\subsection{Definitions and notations}

 Let $G$ be a connected, simply connected Lie group of dimension $k$, let $\mathcal{G}$ be the Lie algebra of $G$, and let $M$ be a $C^\infty$ manifold of dimension $n+k$,
endowed with a Riemannian metric $\|\cdot \|,$ and let $\phi$ be a locally free smooth action of the simply connected Lie group $G$ on $M$.

We will use the following usual simplified notations, denoting $\phi(g, x)$ by $g.x$ or $gx$ for every $x$ in $M$ and every $g$ in $G.$
We will however denote by $\phi^g$ the associated diffeomorphism of $M$, when non evaluated on an element of $M,$ in order to make a distinction with the element $g$ of $G.$
We denote by $G.x$ or  $\mathcal{O}_G(x):=\{gx, g \in G \}$ the orbit of
 $x \in M:$ in the second notation, we keep in mind that the orbit is the leaf of a foliation, the orbit foliation
that we denote by $\cO_G.$ We will also use the simplified notation $\cO$ and $\cO(x)$ when there is no ambiguity on
the group $G.$ We denote by $\Delta_x := \{ g \in G : gx=x \}$
the isotropy group of $x.$ The action can be thought as
 a morphism $G \rightarrow \mbox{Diff}(M)$, the kernel of which is called
  the \textit{kernel of $\phi$:} the kernel is the intersection of all the isotropy groups.
  The action  is said to be \textit{locally free} if the isotropy group of every point is
 discrete.

We can also see this data as an injective Lie morphism from $\cG$ into the Lie algebra of vector fields of $M.$
 For every $\mathfrak g$ in $\cG$, the flow $(t,x) \to \exp(t\mathfrak g)x$ will be called \textit{the flow generated by $\mathfrak g$,} denoted by $\phi_{\mathfrak g},$
and the vector field inducing this flow is denoted by $X_{\mathfrak g}.$

Let $\mathcal{F}$ be a continuous foliation on a manifold $M$. We
denote the leaf that contains $p \in M$ by $\cF(p)$. For an open
subset $U$ of $M$,
 let $\cF|_U$ be the foliation on $U$ such that
 $(\cF|_U)(p)$ is the connected component of $\cF(p) \cap U$
 containing $p \in M$.
A coordinate function $\vphi=(x_1,\cdots, x_n)$ on $U$
 is called {\it a foliation coordinate} of $\cF$
 if $x_{m+1},\cdots,x_n$ are constant functions
 on each leaf of $\cF|_U$, where $m$ is the dimension of $\cF$.
A foliation is of class $C^{r}$ if it is covered by
 $C^{r}$ foliation coordinates.
We denote the tangent bundle of $M$ by $TM$. If $\cF$ is a $C^1$
foliation, then we denote the tangent bundle
 of $\cF$ by $T\cF$.
%For a homeomorphism $h$ from $M$ to another manifold $M'$,
% we define a foliation $H(\cF)$ on $M'$
% by $H(\cF)(p)=H(\cF(H^{-1}(p)))$.

 We fix once for all the Riemannian metric $\|\cdot \|$, and denote by
 $d$ the associated distance map in $M$. We also fix a metric $| \cdot |$
on the Lie algebra $\cG.$

%%%%%%%%%%%%%%%%%%%%%%%%%%%%%%%%%%%%%%%%%%
 \subsection{Anosov actions}
 \label{sub.def}
%%%%%%%%%%%%%%%%%%%%%%%%%%%%%%%%%%%%%%%%%%%
 %Let us recall the definitions and basic properties of Anosov
 %actions.

Let $T\mathcal{O}$ the $k$-dimensional subbundle of $TM$
 that is tangent to the orbits of $\phi$.

 \begin{definition}\hspace{-.5cm}.
\label{defi:splitting}
 \begin{enumerate}
   \item  {\rm A \textit{$G$-splitting of $TM$} is a pair $\xi = (E_1, E_2)$ of subbundles of $TM$ such that:
%\vspace{.3cm}
\begin{equation*}
 TM=E_1 \oplus T\mathcal{O} \oplus E_2
\end{equation*}
%\vspace{.3cm}
\noindent
  }
   \item  {\rm Two $G$-splittings $\xi = (E_1, E_2)$ and $\xi' = (E'_1, E'_2)$ are \textit{transverse one to the other} if $(E_1, E'_2)$ and $(E'_1, E_2)$ are also
$G$-splittings.
   }
  \end{enumerate}
 \end{definition}

Clearly, if $\xi = (E_1, E_2)$ and $\xi' = (E'_1, E'_2)$ are transverse, then dim$E_1 =$ dim$E'_1$ and dim$E_2 =$ dim $E'_2$.
Observe that we don't require the splitting to be $G$-invariant.

 \begin{definition}\hspace{-.5cm}.
 \label{defi:defcodone}
 \begin{enumerate}
   \item  {\rm We say that $\mathfrak{a} \in \mathcal{G}$  is an \textit{Anosov element} for $\phi$ if $g=\phi^{\exp \mathfrak{a}}$
   acts normally hyperbolically with respect to the orbit foliation. That is, there exists real
  numbers $\lambda > 0,\ C > 0$ and a continuous $Dg$-invariant $G$-splitting $\xi = (E^{ss}, E^{uu})$ such that if, $m$ denotes the co-norm operator, then:

  $$
   \|Dg^n|_{E^{ss}}\|, \ \
    \|Dg^{-n}|_{E^{uu}}\|, \ \
    \frac{\|Dg^n|_{E^{ss}}\|}{m(Dg^n|_{T\mathcal{O}})},\ \
    \frac{\|{Dg^n|_{T\mathcal{O}}}\|}{m(Dg^n|_{E^{uu}})}
\leq Ce^{-\lambda n}, \ \  \textrm{for all} \ \ n\geq 0.
    $$
   In the terminology of \cite{hirpush}, it means that $g$ is $1$-normally hyperbolic to $\mathcal{O}$.
    In this case, we say that the element $\mathfrak a$ of $\mathcal G$ and the element $\exp \mathfrak a$ of $G$ are {\it $\xi$-Anosov}.
  The splitting $\xi$ is called a \textit{$g$-hyperbolic splitting.} Observe that given an Anosov element $g$, the $g$-hyperbolic splitting is unique.
  }
   \item  {\rm Call $\phi$ an \textit{Anosov action} if some $\mathfrak{a} \in \mathcal{G}$ is
   an Anosov element for $\phi$.}
   \item {\rm The action $\phi$ is a {\it codimension-one} Anosov action
 if $E^{uu}$ is one-dimensional for some Anosov element $\mathfrak{a} \in \mathcal{G}$.
   }
  \end{enumerate}
 \end{definition}

 Note that the splitting $\xi$ depends on the Anosov element $\mathfrak{a}$. By  Hirsch-Pugh-Shub theory in  \cite{hirpush}  we obtain that the $\xi$ is H\"older continuous
 and the subbundles $E^{ss}$, $E^{uu}$, $T\mathcal{O} \oplus E^{ss}$, $T\mathcal{O} \oplus E^{uu}$
 are integrable.
 The corresponding foliations, $\cF^{ss}[\xi]$, $\cF^{uu}[\xi]$, $\cF^{s}[\xi]$, $\cF^{u}[\xi]$, are called
 \textit{the strong stable foliation, the strong unstable foliation, the weak stable foliation},
 and \textit{the weak unstable foliation}, respectively. The orbits of $\phi$ are the leaves
 of a central foliation $\cO$ that we call \textit{orbit foliation}.

We insist on the fact that these foliations are not necessarily $G$-invariant: consider for example the case the
 affine group acting on the left on the quotient SL$(2, \mathbb R)/\Gamma$
where $\Gamma$ is a cocompact lattice of SL$(2, \mathbb R)$ and $G$ the group Aff of upper triangular matrices. In this case, any non-trivial diagonal matrix is an Anosov element $g$ of Aff for which,
let's say, the unstable bundle $E^{uu}$ is trivial. Then the stable bundle $E^{ss}$ is $g$-invariant, but not Aff-invariant.
% From now we will assume that every Anosov action is a Nil-Anosov and comes with a specified Anosov element $\alpha_0$.
% When $\phi$ is codimension one,
% we will always assume that $\alpha_0$ has one dimensional strong unstable foliation.
% We denote by $\cF^u$, $\cF^s$, $\cF^{ss}$ and $\cF^{uu}$ the foliations associated to $\alpha_0$.
%

   \begin{remark}
   \label{rmk:convex_cone}
      {\rm
    For any splitting $\xi$, let $\mathcal{A}_\xi$ denote the set of $\xi$-Anosov elements. By the Lie product formula we have, for any $\mathfrak a$, $\mathfrak b$ in $\mathcal A_\xi:$
   $$
   \exp(s\mathfrak{a}+t\mathfrak{b})=\lim_{n\to \infty} \big[\exp(\frac{s\mathfrak{a}}{n}) \exp(\frac{t\mathfrak{b}}{n})\big]^n, \ \ \textrm{for any positive real numbers}\ \ s, t.
   $$
Consequently, $s\mathfrak{a}+t\mathfrak{b}$ is an Anosov element in $\mathcal{A}_\xi$ which preserves $\xi$.  It follows that $\mathcal A_\xi$ is a convex cone in $\mathcal G$.
  }
 \end{remark}

 \begin{definition}[Anosov subcone]
 \label{def:subcone}
{\rm
A \textit{subcone} $\cC$ is an open convex cone in $\mathcal G.$ A subcone is \textit{Anosov} if it is made of Anosov elements. It is \textit{strict} if any nonzero element of the closure $\overline{\cC}$ is Anosov.

The \textit{$\cC$-orbit of a point $x,$} denoted by $\cC_x,$ is the set comprising points of the form $\exp(\mathfrak g_1)...\exp(\mathfrak g_N)x$ where the $\mathfrak g_i$'s are elements of $\cC$ in arbitrary numbers.
}
\end{definition}

We consider Anosov subcones as approximations of one parameter subgroups: the $\cC$-orbit of a point $x$ is contained in the orbit $\cO_x$ and
is made of the points of the form $c(1)$ where $c: [0, 1] \to M$ is a differentiable curve  such that $c(0)=x$ and  at any time $t$, the derivative $\dot{c}(t)$ is tangent to the orbit of the flow $\phi_{\mathfrak g}$ where $\mathfrak g$ is an element of $\cC.$ The thiner the cone is, the closer are the $\cC$-orbits to the trajectory of $\phi_{\mathfrak g}$ for any element $\mathfrak g$ of $\cG.$

   The following remark is a consequence of the structural stability of Anosov elements (see  Theorems (7.1), (7.3) and the Remark after Theorem (7.3) in  \cite{hirpush}).

 \begin{remark}
 \label{rk.first}
 The set  $\mathcal{A}=\mathcal{A(\phi)}$ of Anosov elements for $\phi$
  is an open subset of $\mathcal{G}$. More precisely, the map associating to an element $\mathfrak a$ in $\mathcal A$ its Anosov splitting $\xi(\mathfrak a)$ is continuous.
  \end{remark}

 \begin{remark}
 \label{rk.transversesplit}
On the other hand, there is a neighborhood $U$ of $\xi(\mathfrak a)$ in the space of splittings (with prescribed stable/unstable dimensions) such that $\xi(\mathfrak a)$ is the only $\mathfrak a$-invariant splitting in $U$.

More precisely: let $\xi' = (E^1, E^2)$ be any $G$-splitting transverse to $\xi = (E^{ss}, E^{uu})$. Then, at every point $x$ of $M$, the subspace $E^1_x$ is the graph of a linear endomorphism $E_x^{ss} \to E^{uu}_x \oplus T\cO_x$, hence $\xi'$ provides a section $\sigma$ of Hom$(E^{ss}, E^{uu} \oplus T\cO)$. If $\xi'$ is $\exp(\mathfrak a)$-invariant, then $\sigma$ is $\exp(\mathfrak a)$-equivariant. But it follows easily from the Anosov property that the unique  $\exp(\mathfrak a)$-equivariant section of Hom$(E^{ss}, E^{uu} \oplus T\cO)$ is the null one. Therefore we have $E^1 = E^{ss},$ and, similarly, $E^2 = E^{uu}$.
  \end{remark}

  \subsection{Nil-Anosov actions}

    Assume that $G$ has a nontrivial nilradical $N.$
%and that the kernel of the adjoint action of the Levi factor $L$ on the radical $R$ contains no compact simple factor of $L$.
We call $\phi$ a \textit{Nil-Anosov action} if some $\mathfrak{a}\in \mathcal{N},$ the Lie algebra of $N,$ is an Anosov element for $\phi$. In this case $\mathfrak{a}$ is call a \textit{Nil-Anosov element} for $\phi.$

   \begin{theorem}
   \label{thm:inv_foliation}
 If $\phi$ is a Nil-Anosov action, then for any Anosov element $\mathfrak a$ of the nilradical $N$, the hyperbolic splitting $\xi$ for which
  $\mathfrak a$ is Anosov is preserved by the action of $G.$ In particular, every foliation $\cF^{ss}[\xi]$, $\cF^{uu}[\xi]$, $\cF^{s}[\xi]$, $\cF^{u}[\xi]$ is
 $G$-invariant.
 \end{theorem}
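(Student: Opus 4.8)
The plan is to reduce everything to a statement about how a single one-parameter drift in the Lie algebra interacts with the hyperbolicity. Since $G$ is connected, it is generated by the elements $\exp\mathfrak b$, $\mathfrak b\in\mathcal G$, so it suffices to show that $D\phi^{\exp\mathfrak b}$ preserves $\xi=(E^{ss},E^{uu})$ for each fixed $\mathfrak b$. The first observation is that the hyperbolic splitting is natural under conjugation: since $\phi^{\exp\mathfrak b}$ permutes the orbits of $\cO$, the diffeomorphism $\phi^{\exp\mathfrak b}\circ\phi^{\exp(t\mathfrak a)}\circ\phi^{\exp(-\mathfrak b)}=\phi^{\exp(t\,\mathrm{Ad}(\exp\mathfrak b)\mathfrak a)}$ is again $1$-normally hyperbolic, and $D\phi^{\exp\mathfrak b}$ carries $\xi(\mathfrak a)$ onto $\xi(\mathrm{Ad}(\exp\mathfrak b)\mathfrak a)$. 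Hence, by the uniqueness of the splitting, the whole theorem is equivalent to \textbf{Goal}: $\mathrm{Ad}(g)\mathfrak a$ lies in $\mathcal A_\xi$ for every $g\in G$, i.e. $\xi(\mathrm{Ad}(g)\mathfrak a)=\xi(\mathfrak a)$.

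The \emph{key algebraic input} is that $\mathfrak a$ belongs to the nilradical: by the standard characterization of the nilradical (the elements of the radical whose adjoint action on all of $\mathcal G$ is nilpotent), the operator $\mathrm{ad}\,\mathfrak a$ is nilpotent on $\mathcal G$. Conjugating in the other order, $\phi^{\exp(t\mathfrak a)}\circ\phi^{\exp\mathfrak b}\circ\phi^{\exp(-t\mathfrak a)}=\phi^{\exp\mathfrak b(t)}$ with $\mathfrak b(t)=e^{t\,\mathrm{ad}\,\mathfrak a}\mathfrak b=\sum_{k\ge0}\tfrac{t^k}{k!}(\mathrm{ad}\,\mathfrak a)^k\mathfrak b$, which is now a \emph{polynomial} curve in $\mathcal G$ because the series terminates. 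Next I would quotient by the orbit direction: $T\cO$ is automatically $G$-invariant, so $G$ acts on the normal bundle $\nu=TM/T\cO$, where the induced cocycle $\bar F^t=\overline{D\phi^{\exp(t\mathfrak a)}}$ is \emph{genuinely} hyperbolic, $\bar\nu=\bar E^{ss}\oplus\bar E^{uu}$ with $\|\bar F^{t}|_{\bar E^{ss}}\|,\ \|\bar F^{-t}|_{\bar E^{uu}}\|\le Ce^{-\lambda t}$. Writing $\bar A^g=\overline{D\phi^g}$, the commutation above becomes $\bar F^{t}\bar A^{g}=\bar A^{g(t)}\bar F^{t}$. Lifting invariance of $\bar E^{ss},\bar E^{uu}$ back through the $G$-invariant $T\cO$ then yields invariance of $E^{ss},E^{uu}$ and of all four foliations $\cF^{ss}[\xi],\cF^{uu}[\xi],\cF^{s}[\xi],\cF^{u}[\xi]$.

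To show $\bar A^{g}(\bar E^{uu}_x)=\bar E^{uu}_{gx}$ I would use the backward dynamical criterion coming from genuine hyperbolicity on $\nu$: a vector $w\in\nu_{y}$ lies in $\bar E^{uu}_{y}$ if and only if $\sup_{t\ge0}\|\bar F^{-t}w\|<\infty$. Applying this to $w=\bar A^{g}(v)$ with $v\in\bar E^{uu}_x$ and using the commutation relation gives $\bar F^{-t}w=\bar A^{g(-t)}\bar F^{-t}v$, and since $\|\bar F^{-t}v\|\le Ce^{-\lambda t}$ it suffices to prove the transverse cocycle grows sub-exponentially along the polynomial drift, namely $\|\bar A^{\exp\mathfrak b(-t)}\|=o(e^{\lambda t})$. \textbf{This growth estimate is the main obstacle}: a crude Gr\"onwall bound for the time-one map of the polynomially large field $X_{\mathfrak b(t)}$ only gives growth like $\exp(\mathrm{poly}(t))$, which is far too weak, so the nilpotency must be exploited more carefully than merely to make $\mathfrak b(t)$ polynomial. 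I would resolve it by induction on the filtration $V_j=\ker(\mathrm{ad}\,\mathfrak a)^j$ of $\mathcal G$. The base case $\mathfrak b\in V_1$, i.e. $[\mathfrak a,\mathfrak b]=0$, is exactly the classical centralizer situation: the drift is constant, $\|\bar A^{\exp\mathfrak b(-t)}\|$ is a fixed constant, and the criterion applies immediately, so $\xi$ is $\phi^{\exp\mathfrak b}$-invariant. In the inductive step, for $\mathfrak b\in V_{j+1}$ one has $(\mathrm{ad}\,\mathfrak a)^k\mathfrak b\in V_j$ for all $k\ge1$, so $\mathfrak b(t)$ differs from $\mathfrak b$ by a polynomial curve taking values in directions whose transverse cocycles are, by the inductive hypothesis, splitting-preserving and hence of bounded norm on $\bar E^{ss}$ and $\bar E^{uu}$. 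Factoring $\bar A^{\exp\mathfrak b(-t)}$ accordingly into the fixed factor $\bar A^{\exp\mathfrak b}$ and lower-filtration factors then reduces its norm growth to at most polynomial in $t$, which the exponential hyperbolic rate $e^{\lambda t}$ dominates, closing the induction and establishing the Goal and hence the theorem. \fim
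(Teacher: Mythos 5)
Your reduction to the statement $\mathrm{Ad}(g)\mathfrak a\in\mathcal A_\xi$, the passage to the normal bundle $\nu=TM/T\cO$, the bounded-backward-orbit characterization of $\bar E^{uu}$, and the base case $[\mathfrak a,\mathfrak b]=0$ (the classical fact that the centralizer of an Anosov element preserves its splitting) are all correct. The gap sits exactly at the point you flag as the main obstacle, and your proposed resolution does not close it. The inductive step rests on the assertion that the lower-filtration factors are ``splitting-preserving and hence of bounded norm on $\bar E^{ss}$ and $\bar E^{uu}$.'' That ``hence'' is a non sequitur: invariance of the splitting gives no bound at all on the norms. The element $\mathfrak a$ itself lies in $V_1$ and preserves $\xi$, yet $\|\bar A^{\exp(s\mathfrak a)}|_{\bar E^{uu}}\|\geq C^{-1}e^{\lambda s}$; more to the point, the brackets $(\mathrm{ad}\,\mathfrak a)^k\mathfrak b$ that actually occur in the drift $\mathfrak c(t)=\mathfrak b(-t)-\mathfrak b$ may themselves have nonzero transverse expansion rates (already for Anosov actions of $\RR^k$, $k\geq 2$, whole cones of elements commuting with $\mathfrak a$ are Anosov). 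So along a drift of polynomial size $\|\mathfrak c(t)\|\sim t^m$ the factors you produce have norms of order $\exp(\mu\, t^m)$, not polynomial in $t$ --- precisely the $\exp(\mathrm{poly}(t))$ growth you set out to beat --- and the induction does not close. A genuine proof along these lines would have to show that the elements appearing in the drift have transverse expansion rates $<\lambda$ (in fact zero), which nothing in the proposal addresses. There is also a secondary unproven step: $V_j=\ker(\mathrm{ad}\,\mathfrak a)^j$ is not a subalgebra, and $\cG$ (unlike $\mathcal N$) need not be nilpotent, so the factorization of $\exp(\mathfrak b+\mathfrak c(t))$ into $\exp(\mathfrak b)$ times exponentials of lower-filtration elements is not available through a terminating Baker--Campbell--Hausdorff computation.

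For contrast, the paper's proof avoids every growth estimate by a soft, local-in-the-group mechanism. It inducts along the ascending chain of normalizers $\cH_0=\RR\mathfrak a$, $\cH_{i+1}=N_{\mathcal N}(\cH_i)$, which terminates at $\mathcal N$ by the normalizer condition for nilpotent Lie algebras (Lemma \ref{le:ascendingchain}), and then takes one more step using $N_\cG(\mathcal N)=\cG$. The inductive step (Lemma \ref{lem:normalizer}) is: for $\mathfrak h$ small in the normalizer of $\cH$, the conjugated splitting $\exp(\mathfrak h)(\xi)$ is close, hence transverse, to $\xi$; it is the hyperbolic splitting of the conjugate $\exp(\mathfrak h)a\exp(-\mathfrak h)$, which lies in $\exp(\cH)$ and therefore, by the inductive hypothesis, also leaves $\xi$ invariant; uniqueness of the invariant splitting among splittings transverse to $\xi$ (Remark \ref{rk.transversesplit}) then forces $\exp(\mathfrak h)(\xi)=\xi$, and large elements are powers of small ones. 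This transversality-plus-uniqueness rigidity is the idea your argument is missing: it converts ``close'' into ``equal'' without ever estimating the cocycle along large group elements.
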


 The proof of this Theorem is based on this fundamental lemma, that we will use several time in this paper:

\begin{lemma}\textbf{(Ascending chains of normalizers stabilize)}
\label{le:ascendingchain}
Let $\cH_0$ be a Lie subalgebra in a nilpotent Lie algebra  $\mathcal N$. Let $(\cH_i)_{(i \in \NN)}$ be the infinite sequence obtained by taking the successive normalizers:
for every integer $i$,  $\cH_{i+1}$ is the normalizer of $\cH_i$ in $\mathcal N$. Then,
this sequence eventually stabilizes to $\mathcal N$: there is an integer $k$ such that $\cH_k = \mathcal N$.
\end{lemma}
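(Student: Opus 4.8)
The plan is to reduce the statement to the classical \emph{normalizer condition} for nilpotent Lie algebras: every proper subalgebra $\cH$ of $\mathcal N$ is strictly contained in its normalizer $N_{\mathcal N}(\cH)$. Granting this, the conclusion is immediate. Each normalizer $\cH_{i+1} = N_{\mathcal N}(\cH_i)$ is again a subalgebra containing $\cH_i$, so $(\cH_i)_{i\in\NN}$ is an ascending chain of subalgebras of the finite-dimensional space $\mathcal N$. As long as $\cH_i \neq \mathcal N$, the normalizer condition forces $\dim \cH_{i+1} > \dim \cH_i$; since the dimension is bounded by $\dim \mathcal N$, there must be an index $k$ with $\cH_k = \mathcal N$, at which point $N_{\mathcal N}(\mathcal N) = \mathcal N$ makes the sequence constant.

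To establish the normalizer condition, I would argue with the lower central series $\mathcal N = \mathcal N^1 \supseteq \mathcal N^2 \supseteq \cdots$, where $\mathcal N^{i+1} = [\mathcal N, \mathcal N^i]$. Nilpotency gives an integer $c$ with $\mathcal N^{c+1} = 0$. Let $\cH \subsetneq \mathcal N$ be a proper subalgebra. Since $\mathcal N^1 = \mathcal N \not\subseteq \cH$ while $\mathcal N^{c+1} = 0 \subseteq \cH$, there is a largest index $j$, with $1 \le j \le c$, such that $\mathcal N^j \not\subseteq \cH$; by maximality of $j$ one has $\mathcal N^{j+1} \subseteq \cH$. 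Choose any $x \in \mathcal N^j \setminus \cH$. Then for every $h \in \cH \subseteq \mathcal N$ we get $[x,h] \in [\mathcal N^j, \mathcal N] = \mathcal N^{j+1} \subseteq \cH$, so $x$ normalizes $\cH$. Thus $x \in N_{\mathcal N}(\cH) \setminus \cH$, which proves $\cH \subsetneq N_{\mathcal N}(\cH)$.

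The only points requiring care are the bookkeeping facts that $N_{\mathcal N}(\cH)$ is always a subalgebra containing $\cH$ (this is standard, and it is what guarantees that the chain is ascending and consists of subalgebras) and that the index $j$ is well defined. The latter is exactly where nilpotency enters, through the termination $\mathcal N^{c+1} = 0$. The essential idea, and the only genuinely nontrivial step, is the choice of the top nonzero term of the lower central series that is not contained in $\cH$: elements there automatically normalize $\cH$ because one further bracket lands inside $\mathcal N^{j+1} \subseteq \cH$. Everything else is the finite-dimensional ascending chain argument, which cannot terminate before reaching $\mathcal N$ precisely because a proper subalgebra can never coincide with its own normalizer.
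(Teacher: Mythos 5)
Your proof is correct and follows essentially the same route as the paper: the normalizer condition for nilpotent Lie algebras combined with the finite-dimensional ascending chain (dimension-counting) argument. The only difference is that the paper invokes the normalizer condition as a known fact, whereas you prove it via the lower central series (choosing the largest $j$ with $\mathcal N^j \not\subseteq \cH$ and noting $[\mathcal N^j,\mathcal N]\subseteq \mathcal N^{j+1}\subseteq \cH$), which is the standard argument and makes your write-up self-contained.
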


\begin{proof}
%The Lie subalgebra $\mathcal H_{i+1}$ tangent to $H_{i+1}$ is the normalizer of the Lie subalgebra $\mathcal H_i$ tangent to $H_i$.
Nilpotent groups satisfies the \textit{normalizer condition:} for every Lie subalgebra $\mathcal H$, the normalizer $N(\mathcal H)$  contains strictly $\mathcal H$, unless
$\mathcal H$ is already the entire Lie algebra $\mathcal N$. It follows that the dimension of $\mathcal H_i$ increases with $i$, until the equality $\mathcal H_k = \mathcal N$ holds.
\end{proof}

 \begin{lemma}
 \label{lem:normalizer}
 Let $\cH$ be a Lie subalgebra of $\cG$ containing a $\xi$-Anosov element and assume that for every $\mathfrak a$ in $\cH$ the splitting $\xi$ is $\exp(\mathfrak a)$-invariant.
Then, for every $\mathfrak h$ in the normalizer $N_\cG(\cH)$  in $\cG$ the splitting $\xi$ is $\exp(\mathfrak h)$ invariant.
 \end{lemma}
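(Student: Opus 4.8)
The plan is to prove the invariance of $\xi$ under the full one-parameter subgroup generated by $\mathfrak h$ by a connectedness argument, reducing the global statement to an infinitesimal one. Write $h_t = \exp(t\mathfrak h)$ and set
\[
S = \{\, t \in \RR : (\phi^{h_t})_* \xi = \xi \,\},
\]
where $(\phi^{h_t})_*\xi$ denotes the image $G$-splitting $\big(D\phi^{h_t}(E^{ss}), D\phi^{h_t}(E^{uu})\big)$; this is indeed a $G$-splitting because every $\phi^g$ permutes the orbits of $\phi$ and therefore $D\phi^g$ preserves $T\cO$. Since $t \mapsto h_t$ is a homomorphism, $(\phi^{h_{s+t}})_* = (\phi^{h_s})_* \circ (\phi^{h_t})_*$ and $(\phi^{h_{-t}})_* = ((\phi^{h_t})_*)^{-1}$, so $S$ is a subgroup of $(\RR, +)$. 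As an open subgroup of the connected group $\RR$ is all of $\RR$, it suffices to prove that $S$ contains a neighborhood of $0$; then $1 \in S$, which is exactly the asserted $\phi^{\exp \mathfrak h}$-invariance of $\xi$.

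Next I would exploit the normalizer hypothesis. Because $\mathfrak h$ normalizes $\cH$, the subspace $\cH$ is invariant under $\mathrm{ad}(\mathfrak h)$, hence under $\mathrm{Ad}(h_t) = \exp(t\,\mathrm{ad}\,\mathfrak h)$ for every $t$. Thus, for the fixed $\xi$-Anosov element $\mathfrak a \in \cH$, the conjugate $\mathfrak a_t := \mathrm{Ad}(h_t)\mathfrak a$ again lies in $\cH$, and correspondingly $\phi^{a_t} = \phi^{h_t}\phi^{a}\phi^{h_t^{-1}}$ with $a_t = \exp \mathfrak a_t$. This gives two facts. First, $\phi^{a_t}$ is Anosov, being conjugate to the Anosov diffeomorphism $\phi^a$, and its hyperbolic splitting is precisely $\zeta_t := (\phi^{h_t})_*\xi$; transporting the contraction estimates through $D\phi^{h_t}$ and $D\phi^{h_t^{-1}}$ only changes the constant $C$, not the rate $\lambda$, since $M$ is compact. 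Second, since $\mathfrak a_t \in \cH$, the standing hypothesis says that $\phi^{a_t}$ preserves $\xi$.

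I would then invoke the uniqueness in Remark \ref{rk.transversesplit}: if an element is $\eta$-Anosov, the only $G$-splitting transverse to $\eta$ that is invariant under it is $\eta$ itself. Here $\mathfrak a_t$ is $\zeta_t$-Anosov and $\xi$ is $\phi^{a_t}$-invariant, so as soon as $\xi$ is transverse to $\zeta_t$ we may conclude $\xi = \zeta_t = (\phi^{h_t})_*\xi$, i.e. $t \in S$. For $t$ near $0$ we have $\zeta_0 = \xi$ and, transversality being an open condition (uniformly over the compact $M$), $\xi$ is transverse to $\zeta_t$ for all small $t$. Hence $S$ contains a neighborhood of $0$, and the connectedness argument of the first paragraph finishes the proof.

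The hard point is exactly the transversality requirement in Remark \ref{rk.transversesplit}. For a general $\mathfrak h$ the pushed-forward splitting $\zeta_t = (\phi^{h_t})_*\xi$ need not be transverse to $\xi$, so the uniqueness cannot be applied to $\mathfrak h$ directly; this is why I localize to small $t$, where transversality is automatic, and then propagate the conclusion to $t = 1$ using that $S$ is a subgroup. The only other technical point to check carefully is that conjugation by $\phi^{h_t}$ genuinely transports the normally hyperbolic splitting of $\phi^a$ to $\zeta_t$, which rests on the uniform bounds for $D\phi^{h_t}$ afforded by compactness of $M$.
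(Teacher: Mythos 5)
Your proof is correct and follows essentially the same route as the paper's: conjugating the $\xi$-Anosov element $\mathfrak a$ by $\exp(t\mathfrak h)$, noting the conjugate lies in $\cH$ (so preserves $\xi$ by hypothesis) and is Anosov for the pushed-forward splitting, applying the transversality/uniqueness statement of Remark \ref{rk.transversesplit} for small $t$, and then propagating via the group property (your subgroup-of-$\RR$ argument is just a repackaging of the paper's observation that every $\exp(\mathfrak h)$ is an iterate of $\exp(\mathfrak h/n)$ with $\mathfrak h/n$ small).
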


  \begin{proof}
  Let $a = \exp(\mathfrak a)$ be an Anosov element in $H = \exp(\cH).$ Then, for all $\mathfrak h$ in $N_\cG(\cH)$ the conjugate $\exp(\mathfrak h)a\exp(-\mathfrak h)$
  is $\exp(\mathfrak h)(\xi)$-Anosov. If $\mathfrak h$ is small, the splittings $\xi$ and $\exp(\mathfrak h)(\xi)$ are close one to the other, in particular, transverse one to the other.
Since by hypothesis $\xi$ is $\exp(\mathfrak h)a\exp(-\mathfrak h)$-invariant (since $\exp(\mathfrak h)a\exp(-\mathfrak h)$ lies in $\cH$), it follows from Remark \ref{rk.transversesplit} that we have:
  $$
  \exp(\mathfrak h)(\xi)=\xi,
  $$
 The lemma follows since every element of $\exp(N_\cG(\cH))$ is an iterate of some $ \exp(\mathfrak h)$ with  $\mathfrak h$ small.
  \end{proof}

   \begin{proof}[Proof of Theorem \ref{thm:inv_foliation}]
   Let $\cH_0$ be the subalgebra of $\mathcal N$ generated by the Nil-Anosov element $\mathfrak a.$ We define, inductively, $\cH_{i+1}=N_{\mathcal N}(\cH_i)$ for all $i$ in $\mathbb{N}.$
By Lemma \ref{lem:normalizer} we have that $\xi$ is preserved by $\cH_i$ for every $i$ in $\mathbb{N}.$ On the other hand, by Lemma \ref{le:ascendingchain}, there exists $k$ in $\mathbb{N}$ such that $\mathcal N=\cH_k.$

 Finally, by applying Lemma \ref{lem:normalizer} to $N_\cG(\mathcal N)$ and
using the fact that $N_\cG(\mathcal N)=\cG$, because $\mathcal N$ is the nilradical of $\cG$,
we obtain that $G$ preserves $\xi.$
 \end{proof}

\begin{proposition}
\label{pro:opencone}
Let $\mathfrak a_0$ be an element of $\mathcal N$ which is $\xi$-Anosov. Then, the set $\mathcal A_\xi$ of $\xi$-Anosov elements of $G$ (not necessarily in the nilradical)
is an open convex cone, which is a connected component of the set $\mathcal A$ of Anosov elements of $G$.
\end{proposition}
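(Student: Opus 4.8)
The plan is to establish the three assertions---convex cone, openness, and being a connected component---in turn, the decisive ingredient being the $G$-invariance of $\xi$. Since $\mathfrak a_0 \in \mathcal N$ is $\xi$-Anosov, Theorem \ref{thm:inv_foliation} applies and guarantees that the splitting $\xi$ is invariant under the whole action of $G$; in particular $\xi$ is $\exp(\mathfrak c)$-invariant for \emph{every} $\mathfrak c \in \mathcal G$, and this is what I will exploit repeatedly. The convex cone property is already contained in Remark \ref{rmk:convex_cone}: if $\mathfrak a, \mathfrak b$ are $\xi$-Anosov, the Lie product formula shows that $s\mathfrak a + t\mathfrak b$ is again $\xi$-Anosov for all $s, t > 0$, so $\mathcal A_\xi$ is a convex cone, and in particular it is connected.

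For the openness of $\mathcal A_\xi$, I would fix $\mathfrak b \in \mathcal A_\xi$. By Remark \ref{rk.first} the set $\mathcal A$ is open and the map $\mathfrak c \mapsto \xi(\mathfrak c)$ is continuous, so there is a neighborhood $V$ of $\mathfrak b$ contained in $\mathcal A$ on which $\xi(\mathfrak c)$ stays as close as we wish to $\xi(\mathfrak b) = \xi$. Shrinking $V$, I may assume that for every $\mathfrak c \in V$ the splitting $\xi$ is transverse to $\xi(\mathfrak c)$, transversality being an open condition. Now $\xi$ is $\exp(\mathfrak c)$-invariant by the $G$-invariance recalled above, and it is transverse to the hyperbolic splitting $\xi(\mathfrak c)$ of $\mathfrak c$; Remark \ref{rk.transversesplit} then forces $\xi = \xi(\mathfrak c)$. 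Hence every $\mathfrak c \in V$ lies in $\mathcal A_\xi$, and $\mathcal A_\xi$ is open.

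It remains to see that $\mathcal A_\xi$ is a connected component of $\mathcal A$. Consider again the continuous splitting map $\mathfrak c \mapsto \xi(\mathfrak c)$ of Remark \ref{rk.first}: the set $\mathcal A_\xi$ is exactly the preimage of the single splitting $\xi$, and since the space of splittings is Hausdorff, $\mathcal A_\xi$ is closed in $\mathcal A$. Thus $\mathcal A_\xi$ is a nonempty (it contains $\mathfrak a_0$), connected (being convex), and relatively clopen subset of $\mathcal A$; a standard argument then identifies it with a single connected component of $\mathcal A$.

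I expect the openness step to be the main obstacle, since it is the only place where the special nilradical hypothesis---entering through the $G$-invariance of $\xi$ provided by Theorem \ref{thm:inv_foliation}---is genuinely used. Without it the Anosov splitting $\xi(\mathfrak c)$ would generically drift as $\mathfrak c$ varies, and $\mathcal A_\xi$ could be a lower-dimensional cone rather than an open one. The only care required is to make $V$ small enough that $\xi$ and $\xi(\mathfrak c)$ are transverse, so that Remark \ref{rk.transversesplit} is applicable; the closedness and the component conclusion are then formal.
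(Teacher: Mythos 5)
Your proof is correct and follows essentially the same route as the paper: convexity via Remark \ref{rmk:convex_cone}, openness by combining the $G$-invariance of $\xi$ from Theorem \ref{thm:inv_foliation} with the transversality rigidity of Remark \ref{rk.transversesplit} (which is exactly what the paper means by the splitting map being locally constant on $\mathcal A_\xi$), and the clopen-plus-connected argument for the component statement. You merely spell out the transversality and closedness steps that the paper leaves implicit.
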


\begin{proof}
By Remark \ref{rmk:convex_cone}, $\mathcal A_\xi$ is a convex cone.
%Let $\mathcal{A}_0$ be the connected component of $\mathcal{A}$ containing $\mathcal A_\xi.$
According to Remark \ref{rk.first}, $\mathcal A$ is an open neighborhood of $\mathcal A_\xi.$
The map associating to an element $\mathfrak a$ in $\mathcal A$ its Anosov splitting $\xi(\mathfrak a)$ is continuous; and it follows from Theorem \ref{thm:inv_foliation} and Remark \ref{rk.transversesplit} that
it is locally constant at every point in $\cA_\xi$. It follows that $\cA_\xi$ is open in $\cA$. The proposition follows since $\cA_\xi$ is clearly closed in $\cA.$
\end{proof}

Connected components of $\mathcal{A}$ are called \textit{chambers}.
From now we will assume that every Nil-Anosov action comes with a specified Nil-Anosov  element $\mathfrak{a}_0$.
We denote by $\xi$ the splitting for which $\mathfrak a_0$ is $\xi$-Anosov, and by $\cA_0 = \cA_\xi$ the chamber containing the preferred Anosov element $\mathfrak{a}_0$.
The splitting $\xi$ defines the stable/unstable foliations common to every element in $\mathcal{A}_0$, that we will denote by $\mathcal{F}^s$, $\mathcal{F}^u$, $\mathcal{F}^{ss}$ and $\mathcal{F}^{uu},$
dropping the notation $[\xi].$ According to Theorem \ref{thm:inv_foliation}, all these foliations are $G$-invariant.

%More generally, a \textit{regular subcone} is an open convex cone    contained in a chamber.

 For all $\delta >0$, $\cF_{\delta}^{i}(x)$
 denotes the open ball in $\cF^i(x)$ centered at $x$ with radius
 $\delta$ with respect to the restriction of $\|\cdot \|$ to $\cF^i(x)$, where $i=ss,uu,s,u.$

 \begin{theorem}[local product structure]
 \label{thm:local product}
 Let $\phi:G \times M \to M$ be a Nil-Anosov action. There exists a
 $\delta_0 >0$ such that for all $\delta \in (0,\delta_0)$ and for
 all $x\in M,$ the applications
 $$
 [\cdot,\cdot]^u:\cF^s_\delta(x) \times \cF^{uu}_\delta(x)\to M; \ \
 [y,z]^u=\cF^s_{2\delta}(z)\cap \cF^{uu}_{2\delta}(y)
 $$
 $$
 [\cdot,\cdot]^s:\cF^{ss}_\delta(x)\times \cF^{u}_\delta(x)\to M; \ \
 [y,z]^s=\cF^{ss}_{2\delta}(z)\cap \cF^{u}_{2\delta}(y)
 $$

 are homeomorphisms onto their images.
 \end{theorem}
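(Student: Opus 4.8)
The plan is to treat both bracket maps by a single transverse-intersection argument, exploiting that each is the intersection of a strong foliation with the complementary weak foliation. For the first map, note that $T\cF^s = E^{ss}\oplus T\cO$ and $T\cF^{uu}=E^{uu}$, so that $T\cF^s \oplus T\cF^{uu} = TM$ at every point; likewise $T\cF^{ss}\oplus T\cF^u = TM$ for the second map. Hence in both cases we intersect two transverse foliations whose leaf dimensions add up to $\dim M$. By the H\"older continuity of the splitting $\xi$ (Remark \ref{rk.first}) and the compactness of $M$, the angle between the corresponding leaves is bounded below by a positive constant independent of the base point.

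First I would record the regularity supplied by Hirsch--Pugh--Shub theory \cite{hirpush}: each of the four foliations has $C^1$ leaves, and the plaque $\cF^i_\delta(x)$ depends continuously on $x$ in the $C^1$ topology. Combined with the uniform transversality above, this yields, for $\delta_0$ small enough and any $x\in M$, a product neighborhood in which a weak-stable plaque and a strong-unstable plaque of radius $\le 2\delta$ meet in exactly one point whenever their base points lie within $\delta$ of $x$. Concretely, in a chart adapted to $\cF^s$ the plaques of $\cF^{uu}$ are $C^1$ graphs that stay uniformly close to their tangent spaces once $\delta<\delta_0$; the transverse intersection with a fixed $\cF^s$-plaque is then the unique fixed point of a contraction, provided by the implicit function theorem, and it lies inside the $2\delta$-balls. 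This gives existence and uniqueness of $[y,z]^u$ and $[y,z]^s$, and continuity of $(y,z)\mapsto [y,z]^u$ (resp.\ $[y,z]^s$), since the intersecting plaques vary continuously.

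It then remains to produce a continuous inverse. For the first map, given $p=[y,z]^u$, the point $y$ is recovered as the unique intersection $\cF^{uu}_{2\delta}(p)\cap \cF^s_\delta(x)$ and $z$ as $\cF^s_{2\delta}(p)\cap \cF^{uu}_\delta(x)$: both are again transverse intersections of complementary plaques, hence well defined and continuous in $p$ by the previous paragraph. This exhibits $[\cdot,\cdot]^u$ as a continuous bijection onto its image with continuous inverse, i.e.\ a homeomorphism onto its image. The map $[\cdot,\cdot]^s$ is handled identically after exchanging the roles of $\cF^{ss},\cF^u$ with $\cF^s,\cF^{uu}$, and a single $\delta_0$ works for all $x\in M$ by compactness.

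The main obstacle is the uniform existence, uniqueness, and continuity of the transverse intersection in the second step: because the splitting is only H\"older, one cannot appeal to smooth foliation charts, so the argument must instead rely on the uniform $C^1$ behaviour of individual leaves and their continuous variation furnished by \cite{hirpush}, together with the uniform lower bound on the transversality angle, in order to run the implicit-function/contraction argument with constants independent of the base point.
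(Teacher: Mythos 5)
The paper itself gives no proof of this theorem: it is stated, immediately after the appeal to Hirsch--Pugh--Shub theory, as a standard consequence of the transversality of the invariant foliations, so the only comparison possible is with the classical argument the paper implicitly invokes. Your proof is exactly that classical argument --- the splittings $T\cF^s\oplus T\cF^{uu}=TM$ and $T\cF^{ss}\oplus T\cF^{u}=TM$, a uniform lower bound on the transversality angle by compactness and continuity of $\xi$, continuously varying $C^1$ plaques from \cite{hirpush} feeding a contraction/implicit-function argument for existence, uniqueness and continuity of the intersection point, and the recovery of $y$ and $z$ from $p=[y,z]^u$ as transverse intersections to produce the continuous inverse --- and it is correct as written, with $\delta_0$ uniform in $x$ by compactness of $M$.
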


 \begin{remark}\hspace{-.2cm}
 \label{rk.zero}
 \begin{enumerate}
 \item  {\rm
Every leaf of $\cF^{ss}$ or $\cF^{uu}$ is a plane, i.e. diffeomorphic to $\RR^\ell$ for some $\ell   $.
  This is a straightforward observation, since every compact domain in a leaf
 of $\cF^{ss}$ (respectively
 of $\cF^{uu}$) shrinks to a point under positive (respectively negative) iteration by $\phi^{\exp \mathfrak{a}_0}$.
 \item Let $F$ be a weak leaf, let us say a weak stable leaf. For every strong stable leaf $L$ in $F$, let
 $\Gamma_L$ be the subgroup of $G$ comprising elements $a$ such that $\phi^a(L)=L$,
 and let $\omega_L$ be the saturation of $L$ under $\phi$. Thanks to Theorem~\ref{thm:local product} we have:
 \begin{itemize}
 \item $\omega_L$ is open in $F$,
 \item $\Gamma_L$ is discrete.
 \end{itemize}
 Since $F$ is connected, the first item implies $F=\omega_L$: the $\phi$-saturation of a strong leaf  is an entire weak leaf.
 %Therefore, $\Gamma_L$ does not depend on $L$, only on $F$.
 The second item implies that the quotient $P=G/\Gamma_L$ is a manifold. For every $x$ in $F$, define $p_F(x)$ as the equivalence class $a\Gamma_L$ such that
 $x$ belongs to $\phi^a(L)$.  The map $p_F: F \to P$ is a locally trivial fibration
 and the restriction of $p_F$ to any $\phi$-orbit in $F$ is a covering map.
 Since the fibers are contractible (they are leaves of $\cF^{ss}$, hence planes), the fundamental group
 of $F$ is the fundamental group of $P$, i.e. $\Gamma_L$ for any strong stable leaf $L$ inside $F$.

 Observe that if $\cF^{ss}$ is oriented, then the fibration $p_F$ is trivial: in particular, $F$ is diffeomorphic to
 $P \times \RR^{p}$, where $p$ is the dimension of $\cF^{ss}$.
 Of course, analogous statements hold for the strong and weak unstable leaves.
 }
\item {\rm
In the continuation of the previous items, let $F$ be a weak stable leaf, $x$ a point in $F$ and $L$ the strong stable leaf through $x$. Since $p_F: F \to P$
is a locally trivial fibration and the fibers $L$ are contractible, loops in $F$ based at $x$ are all homotopic to a loop $\omega_x^{\mathfrak h}$ obtained by composing a path from $[0,1]$ into
$\cO_x$ of the form $t \mapsto \exp(t\mathfrak h)$ where $\mathfrak h \in \cG$ satisfies $(\exp\mathfrak h)L=L$ with a path joining $\exp(\mathfrak h).x$ to $x.$
The holonomy of such a loop does not depend on the choice of the final path joining $\exp(\mathfrak h).x$ to $x$ since $L$ is contractible; this holonomy
defines a germ at $x$ of homeomorphism of $\cF^{uu}(x)$ which only depends on $x$ and $\mathfrak h.$ We denote it by $h_x^{\mathfrak a}.$
}
 \end{enumerate}
 \end{remark}

   \begin{remark}
   \label{rk:periodiclocal}
   {\rm
   We will also need a local description of fixed points of Anosov elements:
   let $a = \exp(\mathfrak a)$ be an $\xi$-Anosov element of an Anosov action of $G$. Let $x$ be a fixed point
   of $\phi^a$. Let $D$ be a small disk  containing $x$ transverse to the orbit foliation $\cO$.
   There is a small neighborhood $U$ of
   $x$ in $M$ and a neighborhood $V$ of $e$ (the identity element) in $G$ such that, for every $z$ in $U$, there is one and only one element $b$ in $V$ such that $bz$ belongs to $D$. Moreover,
   reducing $U$ if necessary, we can assume that for every $b$ in $V$ the product $ba$ is Anosov.
   Let $D' \subset D$ be a subdisk such that $\phi^{a}(D')$ is contained in $U$. Then there is a ``first return map'' $\varphi: D' \rightarrow D$ and a ``variation of time of first return map''
   $\alpha: D' \rightarrow V$ uniquely defined by:
   $\varphi(z)={\alpha(z)a}.z \in D$. Observe that the map $\varphi$ could also be defined
   as the holonomy of $\cO$ along the periodic orbit of the flow $\phi_{\mathfrak a}$
   containing $x$.

   The weak foliations $\cF^{s}$ and $\cF^u$ are transverse to $D'$ and thus define two foliations
   $\cG^s$ and $\cG^u$, transverse to one another, of supplementary dimensions, and preserved by the holonomy map $\varphi$. Let $s_0$ (respectively $u_0$) the leaf of $\cG^s$ (resp. $\cG^u$)
   through $x$. They are both $\varphi$-invariant, the action of $\varphi$ on $s_0$ is contracting, and the action on $u_0$ is expanding. By considering the map from $D'$ into $s_0 \times u_0$ associating to a point
   $y$ the pair $(p, q)$, where $p$ is the intersection between $\cG^u(y)$ and $s_0$, and $q$ the intersection between $\cG^s(y)$ and $u_0$ (cf. Theorem~\ref{thm:local product}), we get that $x$ is a saddle fixed point of $\varphi$. In particular, $x$ is an isolated fixed point of $\varphi$.
   }
   \end{remark}

 \begin{remark}
 \label{rmk:lattice_anosov}
 If $H$ is a Lie subgroup of $N$ containing a Nil-Anosov element, then
 every uniform lattice in $H$ contains Anosov elements. In particular, the isotropy subgroup of a point in $M$ whose $H$-orbit is compact contains Anosov elements.
 {\rm
 Indeed, if $\Delta$ is an uniform lattice of $H$, then, by a result in \cite[Theorem (5.1)]{loglattice}, $\Delta$ contains a log-lattice, i.e. a finite index sub-lattice $\Delta' \subset \Delta$ such that $\log \Delta'$
is a lattice of the vector space $\mathcal{H}$. On the other hand, let $\mathfrak{h}$ be a Nil-Anosov element in $\mathcal{H}$ and let $\mathcal{A}_0$ be the chamber in $\mathcal{G}$ containing the Anosov element $\mathfrak{h}$. Then, since Proposition \ref{pro:opencone} implies that $\mathcal{H}\cap \mathcal{A}_0$ is an open convex cone in $\mathcal{H},$ the set
$$
(\mathcal{H}\cap \mathcal{A}_0)\cap \log \Delta'
$$ is not empty. Consequently, $\Delta'\subset \Delta$ contains Anosov elements, and thus, our assertion follows.
}
 \end{remark}

\subsection{Examples}
\label{sec:exemple}

In this subsection, we give examples of general Anosov actions, not necessarily Nil-Anosov. We start showing how to produce new Anosov actions from the data of Anosov actions.
%LEFT TRANSLATION OF A GROUP ON ITSELF...

\subsubsection{Products of Anosov actions} If  $\phi_1: G_1 \times M_1 \to M_1$ and  $\phi_2: G_2 \times M_2 \to M_2$ are two Anosov actions,
then the product action of $G_1 \times G_2$ on $M_1 \times M_2$ is clearly Anosov.

The nilradical of $G_1 \times G_2$ is the product of the nilradicals of $G_1$ and $G_2,$ and an element $(g_1, g_2)$ of $G_1 \times G_2$ is Anosov for $\phi_1 \times \phi_2$ if and only if $g_1$ is Anosov for $\phi_1$ and $g_2$ is Anosov for $\phi_2.$ It follows that the product action is Nil-Anosov if and only if
$\phi_1: G_1 \times M_1 \to M_1$ and  $\phi_2: G_2 \times M_2 \to M_2$ are both Nil-Anosov.

This remark includes the limit case where one of the action, let's say $\phi_2: G_2 \times M_2 \to M_2$, is transitive, i.e. an action of $G_2$ by left translations on
a compact quotient $G_2/\Lambda_2.$  This is partially generalized in the next section.

\subsubsection{Nil-extensions}
\label{sub:central}
 We consider any extension:
$$0 \to H \to \widehat{G}   \overset{p}\rightarrow G \to 0$$
where $H$ is a nilpotent Lie group. A locally free action $\hat{\phi}: \widehat{G} \times \widehat{M} \to \widehat{M}$ is a \textit{Nil-extension} of $\phi: G \times M \to M$
if there is a $\widehat{G}$-equivariant map $\pi: \widehat{M} \to M$ whose fibers are precisely the $H$-orbits:
$$ \pi(\hat{g}.x) = p(\hat{g}).\pi(x).$$
We furthermore require that the map $x \mapsto H_x$ where $H_x$ is the stabilizer of $x$ for the action of $H$ is continuous as map from
$\widehat M$ into the space of lattices of $H.$

In other words, $\pi$ is a locally trivial fibration over $M,$ with fibers homeomorphic to compact quotients of $H.$
It induces an identification between the orbit space $H\backslash\widehat{M}$ and $M$, and $\phi$ is the induced action.

A Nil-extension introduces no additional dynamical feature. In particular, elements of $H$ have a ``trivial'' transversal action;
they cannot be Anosov relatively to the action $\hat{\phi}.$ Hence, clearly, $\phi$ is (Nil-)Anosov if and only if $\hat{\phi}$ is (Nil-)Anosov.

A fundamental type of Nil-extensions are the \textit{central extensions,} i.e. Nil-extensions for which $N$ is in the center of $G.$
Observe that central extensions are not necessarily product; see Starkov's example described  in \cite[Section $3.2.3$]{algebrique}.

\subsubsection{Affine group of the real line}
Let Aff be the subgroup of SL$(2, \RR)$ made of upper diagonal matrices: we have already considered previously the
action of Aff on compact quotients SL$(2, \RR)/\Gamma$ as an example of Anosov action for which the splitting of Anosov elements is not preserved by the entire group.

This Anosov action is not Nil-Anosov. Observe that it admits no compact orbits.

\subsubsection{Engel subalgebras}
Let $G$ be a real Lie group, not necessarily semisimple, and let $\cG$ be its Lie algebra. For any element $\mathfrak h$ of
$\cG$ we denote by ad $\mathfrak h$ the adjoint action of $\mathfrak h$ on $\cG.$ We denote by $L_0(\mathfrak h)$
the $0$-characteristic subspace of  ad $\mathfrak h:$ it is a subalgebra of $\cG$, called a \textit{Engel subalgebra.}
We actually have a splitting of $\cG$ by characteristic subspaces:
$$\cG = L_0(\mathfrak h) \oplus L^s(\mathfrak h) \oplus L^u(\mathfrak h) \oplus L^i(\mathfrak h)$$
where $L^s(\mathfrak h)$ (resp. $L^u(\mathfrak h)$) is the sum of characteristic subspaces associated to eigenvalues
with negative real part (resp. positive real part), and where $L^i(\mathfrak h)$ is the sum of characteristic subspaces associated
to purely imaginary eigenvalues.

The sum $\mathcal H = L_0(\mathfrak h) \oplus L^i(\mathfrak h)$ is a subalgebra. It follows immediatly that, for any cocompact lattices $\Gamma$ of $G$ the right action on $\Gamma\backslash G$
of the Lie group $H$ admitting $\cH$ as Lie algebra is Anosov.

Moreover, if $L_0(\mathfrak h)$ is \textit{minimal} (i.e. does not contain a proper Engel subalgebra) then it is a \textit{Cartan subalgebra} (abrev. CSA) i.e. is nilpotent and equal to its own normalizer (see \cite[Theorem page $80$]{humphreys}).
It happens - for example if $G$ is real semisimple and $L_0(\mathfrak h)$ a hyperbolic CSA, see \cite[Appendix $A.3$]{algebrique} - that $L^i(\mathfrak h)$ is trivial, then we obtain in this case an Anosov action of a nilpotent Lie group.

More generally, one can construct from CSA's many examples of \textit{algebraic Anosov actions,} i.e. actions on quotients
$\Gamma\backslash G/K$ of Lie subalgebras $\cH$ of $\cG$ where $K$ is a compact subgroup commuting with $\exp(\cH).$
In \cite{algebrique}, the authors study in detail these examples when $\cH$ is nilpotent.

\subsection{Nil-Anosov actions are Nil-faithful up to Nil-extensions}

We can reduce any action $\phi: G \times M \to M$ to a faithful one simply by replacing $G$ by $G/\ker\phi$ and considering the induced action on $M.$ On the other hand, many aspects of nilpotent Lie groups are simplified when they are simply connected: the exponential map is a diffeomorphism, and one can more easily manipulate subgroups of $N$ when it is simply connected.
By taking the universal covering if necessary, we can easily reduce to the case where $G$ and its nilradical are simply connected, but then we may loose the faithfulness of the action.

Nevertheless, the following proposition shows that, up to Nil-extensions, we can keep the  fact that the nilradical is simply connected.

\begin{definition}
Let $\phi: G \times M \to M$ be an action of a simply connected Lie group $G.$ The action is \textit{Nil-faithful} if the induced action of $N$ is faithful.
\end{definition}

\begin{lemma}
\label{le:centralextension}
Every Nil-Anosov action of a simply connected Lie group is a Nil-extension over a Nil-faithful Nil-Anosov action of a simply connected Lie group.
\end{lemma}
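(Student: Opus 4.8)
The plan is to locate the failure of Nil-faithfulness inside the nilradical, thicken it to a central vector subgroup $H$, and then present $\phi$ as the quotient by the induced compact torus action. First I would record the basic structure of the kernel. Since $\phi$ is locally free, $K:=\ker\phi$ is contained in every (discrete) isotropy group $\Delta_x$, hence is discrete; since $G$ is connected, the continuous map $g\mapsto gkg^{-1}$ into the discrete set $K$ is constant, so $K$ is central. Set $\Gamma_0:=K\cap N$, a discrete central (hence abelian) subgroup of the simply connected nilpotent group $N$. Because each $\gamma\in\Gamma_0$ is central, $\mathrm{Ad}(g)\log\gamma=\log(g\gamma g^{-1})=\log\gamma$ for all $g$, so $\log\Gamma_0\subseteq\mathfrak z(\mathcal G)\subseteq\mathcal N$. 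I then let $\mathcal H:=\mathbb R\text{-span}(\log\Gamma_0)\subseteq\mathfrak z(\mathcal G)$ and $H:=\exp\mathcal H$: this is a central, hence normal, vector subgroup of $G$ isomorphic to $\mathbb R^d$ in which $\Gamma_0$ is a lattice.

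Next I would build the base action. As $\Gamma_0\subseteq K$ acts trivially, the $H$-action on $M$ factors through the compact torus $T=H/\Gamma_0$; this torus action is locally free with compact orbits, and $x\mapsto H_x=\Delta_x\cap H$ is a continuous family of lattices of $H$ (one checks the action is in fact free by a short dynamical argument, so that $M':=H\backslash M=T\backslash M$ is a closed manifold). The central extension $0\to H\to G\to G':=G/H\to 0$ has simply connected quotient $G'$, since $G$ is simply connected and $H$ is connected, and its nilradical is $N'=N/H$. The projection $\pi:M\to M'$ is $G$-equivariant with fibers exactly the $H$-orbits, so by construction $\phi$ is a Nil-extension over the induced action $\psi:G'\times M'\to M'$. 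Moreover $\psi$ is Nil-Anosov: the chosen element $\mathfrak a_0\in\mathcal N$ cannot lie in $\mathcal H$, because $H$ acts through the equicontinuous torus $T$ and so cannot contain an element generating a hyperbolic flow; hence its image $\bar{\mathfrak a}_0\in\mathcal N'$ is nonzero, and since a Nil-extension introduces no transverse dynamics, $\bar{\mathfrak a}_0$ is an Anosov element of $\psi$.

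The heart of the argument is verifying that $\psi$ is \emph{Nil-faithful}, i.e. $\ker\psi\cap N'=\{e\}$. Since $\psi$ is locally free and $G'$ is connected, $\ker\psi$ is discrete central. Suppose $\bar n\in\ker\psi\cap N'$ lifts to $n\in N\setminus H$. Triviality of $\bar n$ on $M'$ means that for each $x$ there is $h\in H$ with $nx=hx$; centrality of $H$ makes $h$ well defined modulo $\Gamma_0$, and the assignment descends to a continuous map $\bar c:M'\to T$. Using that $\bar n$ is central in $G'$, so that $[n,g]\in H$ for all $g$, a direct computation yields the cocycle relation $\bar c(g\cdot)=\rho(g)^{-1}\,\bar c(\cdot)$ for a continuous homomorphism $\rho:G'\to T$. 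The crucial step is then to prove that $\bar c$ is constant. If $x,y$ lie on a common strong stable leaf, then $d(g_tx,g_ty)\to 0$ under the Anosov element $g_t=\exp(t\bar{\mathfrak a}_0)$, whereas the relation forces $\bar c(g_tx)\,\bar c(g_ty)^{-1}=\bar c(x)\,\bar c(y)^{-1}$ to be independent of $t$; by continuity of $\bar c$ this common value must be $\bar e$, so $\bar c(x)=\bar c(y)$. Thus $\bar c$ is constant along $\cF^{ss}$, and symmetrically along $\cF^{uu}$; combined with the local product structure (Theorem~\ref{thm:local product}) and the homomorphism relation along the orbit directions, this forces $\bar c$ to be globally constant. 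Hence $h$ may be chosen independent of $x$, so $h^{-1}n$ fixes every point, giving $h^{-1}n\in K\cap N=\Gamma_0\subseteq H$ and therefore $n\in H$, that is $\bar n=\bar e$.

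I expect the last paragraph to be the main obstacle: everything up to Nil-Anosovness is essentially bookkeeping about central subgroups, but showing that the transition map $\bar c$ is constant is where the dynamics genuinely enter. The contraction of strong stable leaves under the Anosov element, together with the invariance of the foliations guaranteed by Theorem~\ref{thm:inv_foliation} and the local product structure, is precisely what rules out a nontrivial ``twisting'' and thereby forces $n$ back into $H$. The freeness of the torus action (needed to ensure $M'$ is a manifold rather than merely an orbifold) is a secondary technical point that I would settle by the same commuting-with-the-Anosov-element mechanism.
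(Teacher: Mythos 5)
Your first two paragraphs reproduce the paper's construction step for step: the kernel is discrete and central, $\Gamma_0=\ker\phi\cap N$ spans a central vector subgroup $H=\exp(\mathcal H)$, and the quotient by the torus $H/\Gamma_0$ is a central Nil-extension over a Nil-Anosov action of the simply connected group $G/H$. The divergence, and the gap, is in your third paragraph: the paper never claims that one quotient suffices. Its proof ends by \emph{iterating} the construction (``repeating the argument until the action on the quotient space is Nil-faithful''), the iteration terminating because $\dim N$ drops at each nontrivial step. You instead claim the quotient $\psi$ is already Nil-faithful, and the decisive step of that claim is false: from ``$\bar c$ is constant along $\cF^{ss}$ and $\cF^{uu}$'' together with the relation $\bar c(g\xi)=\rho(g)^{-1}\bar c(\xi)$ one cannot conclude that $\bar c$ is constant. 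A continuous map that is constant on both strong foliations and $\rho$-equivariant along orbits is exactly the structure of a fibration of $M'$ over the torus $T$ intertwining $\psi$ with a translation action -- the model being the fibration of a suspension Anosov flow over the base circle -- and nothing in your argument excludes this; you would need $\rho\equiv e$, which is never proved (and cannot be, in general).

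This is not a reparable slip: one quotient genuinely does not suffice. Take $f$ a hyperbolic automorphism of $\TT^2$, let $G$ be the Heisenberg group with law $(t,u,v)(t',u',v')=(t+t',u+u',v+v'-t'u)$, acting on $\widehat M=\TT^2\times\RR\times(\RR/\ZZ)^2$ by $\phi^{(t,u,v)}(x,s,\theta,\eta)=(x,s+t,\theta+u,\eta+v-su)$. One checks this is a left action and that it commutes with $\tau(x,s,\theta,\eta)=(fx,s-1,\theta,\eta+\theta)$, so it descends to the closed $5$-manifold $M=\widehat M/\langle\tau\rangle$. The element $(1,0,0)$ is normally hyperbolic to the orbit foliation (its return map is $(x,\theta,\eta)\mapsto(fx,\theta,\eta+\theta)$, with $E^{ss},E^{uu}$ the stable and unstable line fields of $f$, the orbit directions carrying only polynomial growth), so this is an Anosov action of a nilpotent group. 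Its kernel is $\{(0,0,v):v\in\ZZ\}$, whence $H$ is the center $\{(0,0,v):v\in\RR\}$; the quotient action is $\RR^2$ acting on $M_0\times\RR/\ZZ$ ($M_0$ the mapping torus of $f$) by suspension flow times rotation, whose kernel $\{0\}\times\ZZ$ is nontrivial: $\psi$ is \emph{not} Nil-faithful. Concretely, your $n=(0,1,0)$ translates the $\eta$-fibre over $[x,s,\theta]$ by $\bar c=-s \bmod \ZZ$, which is nonconstant, with $\rho(t,u)=t \bmod \ZZ$: precisely the suspension-type behaviour your last step overlooks. (A second quotient, by the $\theta$-circle, does reach the faithful suspension flow on $M_0$, in accordance with the paper's iterative proof; note also that both you and the paper assert, without proof, the freeness of the torus action -- a shared but secondary gap.)
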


\begin{proof}
Let $\phi: G \times M \to M$ be a  Nil-Anosov action which is not Nil-faithful. Let $\Lambda$ be the intersection between the nilradical $N$ and the kernel of $\phi.$ Then $\Lambda$ is discrete in $N$ and stable by conjugacy: it follows that $\Lambda$ is in the centralizer $Z(G)$ of $G$. Since $N$ is nilpotent and simply connected, $Z(N) = Z(G) \cap N$ is isomorphic to $\RR^m$ for some $m;$ let
$H$ be the minimal connected Lie subgroup of $Z(N)$ containing $\Lambda.$ Then the torus $T := \Lambda\backslash H$ acts freely and properly on $M,$ and the orbit space $\overline{M} := T\backslash M$ is equipped with an Anosov action of  $H\backslash G$ whose nilradical $H\backslash N$ is simply connected.
The quotient map $M \to \overline{M}$ is a central extension.

The Lemma is obtained by repeating the argument until the action on the quotient space is Nil-faithful.
\end{proof}

\section{Anosov actions of nilpotent Lie groups}\label{sec:nilpotent}
From now on, we restrict to the case where the Lie group $G$ is nilpotent, i.e. equal to its nilradical $N$.
In this section, we will observe that compact orbits play a central role in the dynamical behavior of Anosov actions of nilpotent Lie groups.

\medskip

\begin{center}
\textbf{Convention:} We denote by Comp$(\phi)$ the set of compact $\phi$-orbits.
\end{center}
\medskip

By Lemma \ref{le:centralextension}, we can assume without loss of generality that $N = G$ is simply connected (all our results are insensitive to Nil-extensions).

\subsection{Compact orbits}

 Most of the content of this section and of the following section \ref{sec:spectral} is announced in \cite{hirsch}, but the proofs have not been published anywhere.

\begin{theorem}
\label{thm:compact_orbit}
Let $\phi$ be an Anosov action of a nilpotent Lie group $N$. A $\phi$-orbit is compact if and only if it contains a point fixed by an Anosov element.
\end{theorem}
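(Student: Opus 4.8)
The statement splits into two implications, of which one is essentially immediate and the other carries all the content.

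For the direction ``compact orbit $\Rightarrow$ fixed point of an Anosov element'' I would simply invoke Remark~\ref{rmk:lattice_anosov} with $H=N$, which is legitimate since $N$, being nilpotent and equal to its own nilradical, contains the Nil-Anosov element $\mathfrak a_0$. If $\cO(x)$ is compact then $\cO(x)=N/\Delta_x$ with $\Delta_x$ a uniform lattice of $N$, and the Remark guarantees that $\Delta_x$ contains an Anosov element $a=\exp(\mathfrak a)$; by definition $\phi^a(x)=x$. Nothing more is needed here.

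The substance is the converse, so assume $\phi^a(x)=x$ for some Anosov $a=\exp(\mathfrak a)$ with $\mathfrak a\in\mathcal N$, and let us prove that $\cO(x)$ is compact. Since $M$ is compact, it suffices to show that $\cO(x)$ is a \emph{closed} subset of $M$: a closed orbit inside a compact manifold is compact. Two preliminary observations drive the argument. First, every point $hx$ of the orbit is again fixed by an Anosov element, namely the conjugate $hah^{-1}=\exp(\mathrm{Ad}(h)\mathfrak a)$, which is Anosov for the same splitting by the $G$-invariance of $\xi$ (Theorem~\ref{thm:inv_foliation}); thus $\cO(x)$ consists entirely of fixed points of Anosov elements. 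Second, the derivative of $g=\phi^a$ along the orbit at $x$ is $Dg|_{T_x\cO}=\mathrm{Ad}(a)$, which is unipotent because $\mathcal N$ is nilpotent, so the orbit direction is genuinely neutral (sub-exponential), consistent with the normal hyperbolicity. Finally, by Remark~\ref{rk:periodiclocal}, $x$ is a \emph{saddle}, hence isolated, fixed point of the first-return/holonomy map on a transversal, and the uniform local product structure (Theorem~\ref{thm:local product}) provides a neighborhood of $x$ in which the orbit is organised by the contracting direction $\cF^{ss}$, the expanding direction $\cF^{uu}$, and the neutral orbit direction.

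If $\cO(x)$ were not closed it would be non-compact, so $N/\Delta_x$ is non-compact and, $M$ being compact, there exist $h_m\in N$ with $h_m\Delta_x\to\infty$ and $h_mx\to y\in\overline{\cO(x)}\setminus\cO(x)$. Each $h_mx$ is a saddle point fixed by the Anosov element $\exp(\mathrm{Ad}(h_m)\mathfrak a)$, whose local strong stable and strong unstable leaves are the \emph{same} foliations $\cF^{ss},\cF^{uu}$. I would then place these accumulating saddle points in local product coordinates near $y$ and use the uniform hyperbolicity of $g$ — iterating $g^{\pm n}$ to slide the points $h_mx$ onto the strong stable and strong unstable leaves — to contradict the fact that each is an isolated Anosov-fixed point; equivalently, to force the offending points to lie on a single local orbit plaque, which would bound $h_m$ modulo $\Delta_x$ and contradict $h_m\Delta_x\to\infty$. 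The main obstacle is precisely this limiting step: converting the abstract recurrence furnished by compactness of $M$ into the coincidence of accumulating saddle points with the local orbit plaque. This is a shadowing-type argument in which uniform hyperbolicity is indispensable, and it cannot be replaced by pure nilpotent group theory, since a single nonzero element of a discrete subgroup of $N$ need not generate a cocompact lattice — the compactness must come from the global dynamics.
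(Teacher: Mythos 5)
Your first direction is fine and matches the paper: Remark~\ref{rmk:lattice_anosov} with $H=N$ gives an Anosov element in the isotropy lattice of any compact orbit. But in the converse direction, which carries the whole content of the theorem, your argument has a genuine gap, and you say so yourself: the ``limiting step'' you describe as the main obstacle is exactly what needs to be proved, and the strategy you outline for it does not work as stated. The accumulating points $h_mx$ are fixed by the Anosov elements $a_m=\exp(\mathrm{Ad}(h_m)\mathfrak a)$, but $\mathrm{Ad}(h_m)\mathfrak a$ is in general unbounded as $h_m\to\infty$ (unipotence of $\mathrm{Ad}$ only means the blow-up is polynomial in $\log h_m$, not that it is absent), so these are fixed points of a sequence of maps with no uniform hyperbolicity constants and no limit: there is no single return map, nor a convergent family of them, whose isolated-fixed-point property the accumulation could contradict. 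Iterating the one fixed map $g=\phi^a$ does not help either, since $g$ does not fix $h_mx$ (indeed $g^nh_mx=h_m\exp(n\,\mathrm{Ad}(h_m^{-1})\mathfrak a)x$). Without some compactness statement for the family of isotropy groups, the saddle picture at each $h_mx$ gives no uniform information near $y$.

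The paper supplies exactly the missing structure, and it is worth seeing why it cannot be bypassed. Instead of attacking the full $N$-orbit at once, it runs an induction along the ascending chain of normalizers $H_0\subset N(H_0)\subset N(N(H_0))\subset\cdots$, which terminates at $N$ by nilpotence (Lemma~\ref{le:ascendingchain}). The inductive step (Lemma~\ref{lem:orbit_normalizer}) starts from a subgroup $H$ whose orbit of $x_0$ is \emph{already known to be compact}: this yields a uniform lattice $\Delta_0\subset H$, and the isotropy groups of the points $z_nx_0$, $z_n\in N(H)$, are the conjugates $z_n\Delta_0z_n^{-1}$, lattices of the same co-volume (unimodularity of the adjoint action on a nilpotent group) whose elements are uniformly bounded away from the identity (local freeness). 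Mahler's criterion for nilpotent groups then gives a limit lattice $\Delta_\infty$ fixing the limit point $x_\infty$; Remark~\ref{rmk:lattice_anosov} produces an Anosov element \emph{in} $\Delta_\infty$; and only then can the isolated-saddle argument of Remark~\ref{rk:periodiclocal}, combined with a bound on the number of intersections of the equal-area $H$-orbits with a transversal disk, be run with a single limiting map to force $z_nx_0=x_\infty$. Your closing observation that ``a single nonzero element of a discrete subgroup of $N$ need not generate a cocompact lattice'' identifies the right difficulty, but it is precisely why your one-shot approach has nothing to feed a compactness argument with: the intermediate compact $H$-orbits are what create the lattices. As written, the proposal reduces the theorem to an unproved claim rather than proving it.
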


The Theorem will be proved by induction involving the following Lemma.

 \begin{lemma}
 \label{lem:orbit_normalizer}
 Let $H$ be a subgroup of $N$ containing an Anosov element $a=\exp(\mathfrak a)$ and  let $x_0$ be an element of $M.$ Let $N(H)$ denote the normalizer in $N$ of $H.$ If  the $H$-orbit of  $x_0$ is compact, then
the $N(H)$-orbit of $x_0$ is compact.
 \end{lemma}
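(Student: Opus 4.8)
The plan is to deduce the statement from two ingredients already available: the existence of an Anosov element in the stabilizer (Remark \ref{rmk:lattice_anosov}), and the isolation of fixed points of Anosov elements transverse to the orbit foliation (Remark \ref{rk:periodiclocal}). The whole argument would then be organized as an induction on $\dim N$ through central Nil-extensions. First I would record the basic data. By Remark \ref{rmk:lattice_anosov}, $\Delta:=H\cap\Delta_{x_0}$ is a uniform lattice of $H$ and contains a $\xi$-Anosov element $b=\exp(\mathfrak b)$; in particular $b\cdot x_0=x_0$. Since $H$ is normal in $N(H)$ and $\mathrm{Ad}$ is unipotent (so conjugation by an element of $N$ preserves covolume), for every $h\in N(H)$ the stabilizer $h\Delta h^{-1}$ is again a uniform lattice of $H$. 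Hence every $H$-orbit inside $N(H)x_0$ is compact, and $N(H)x_0$ fibers over $N(H)/H$ with compact nilmanifold fibers; it is therefore enough to prove that $N(H)x_0$ is closed in $M$.

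The dynamical engine I would use is the following consequence of Remark \ref{rk:periodiclocal}: a single Anosov element $a$ cannot fix an infinite family of points lying on pairwise distinct $\phi$-orbits and accumulating on a point that $a$ also fixes, since otherwise the first-return map of $a$ on a transversal to $\mathcal O$ would have non-isolated fixed points. I would apply this to $b$ along central directions. Let $\mathfrak z=Z(\mathcal N)$ be the center and $Z=\exp(\mathfrak z)$. For $\mathfrak c\in\mathfrak z$ the element $\mathfrak c$ commutes with $\mathfrak b$, so $b$ fixes every point $\exp(s\mathfrak c)x_0$; consequently $\overline{\{\exp(s\mathfrak c)x_0:s\in\RR\}}$ is a compact subset of $\mathrm{Fix}(\phi^b)$. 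By the engine this closure cannot meet any orbit other than $\mathcal O(x_0)$: such an accumulation point $y$, together with the nearby points of $\mathcal O(x_0)$ crossing a transversal to $\mathcal O(y)$ (all of them fixed by $b$, hence giving fixed points of the first-return map of $b$), would violate the isolation at $y$. Thus each central one-parameter orbit is relatively compact in $\mathcal O(x_0)$, whence $Z\cap\Delta_{x_0}$ is a uniform lattice of $Z$ and $Zx_0$ is compact. Setting $H^{+}:=HZ$, we obtain a subgroup of $N(H)$ with compact orbit, containing $b$ and containing the center.

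I would then run the induction on $\dim N$. Passing to the central Nil-extension $\overline M:=Z\backslash M$ over $\overline N:=N/Z$ is legitimate because $Zx_0$ is compact, so $Z$ acts freely and properly, and Nil-extensions preserve the Anosov condition (cf. Section \ref{sub:central}). The image $\overline{H^{+}}=H^{+}/Z$ has compact orbit and inherits an Anosov element, and $\dim\overline N<\dim N$, so the inductive hypothesis yields that $N_{\overline N}(\overline{H^{+}})\,\overline{x_0}$ is compact; pulling back through the central extension (the normalizer commutes with the quotient by the central $Z\subseteq H^{+}$) shows that $N(H^{+})x_0$ is compact. Finally I would descend from $H^{+}$ to $H$: one checks $N(H)\subseteq N(HZ)=N(H^{+})$ and $N(H)=N_{N(H^{+})}(H)$, while $H$ is rational inside the compact nilmanifold $N(H^{+})x_0$ because it contains the lattice $\Delta$. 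Since the normalizer of a rational subgroup is rational, $N(H)$ has compact orbit in $N(H^{+})x_0$, which is exactly the assertion.

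The step I expect to be the main obstacle is the exclusion of ``escape to infinity'' of the orbit, and this is precisely why the whole argument is routed through the center. The engine of Remark \ref{rk:periodiclocal} requires a single, fixed Anosov element fixing the entire accumulating family; but along a non-central direction $\exp(s\mathfrak c)$ the natural element fixing $\exp(s\mathfrak c)x_0$ is $\exp(\mathrm{Ad}(\exp s\mathfrak c)\mathfrak b)$, which varies with $s$ and may leave every compact set under the unipotent action of $\mathrm{Ad}$, so the isolation statement cannot be applied directly. Choosing $\mathfrak c$ central keeps this element equal to $b$, so the isolation argument applies verbatim; the non-central directions are absorbed only afterwards, once the dimension has been lowered by the central Nil-extension and, at the very end, through the rationality of normalizers inside a compact nilmanifold.
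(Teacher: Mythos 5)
Your reduction to showing that $N(H)x_0$ is closed, and your ``engine'' itself (isolation of the hyperbolic fixed point of the return map of Remark \ref{rk:periodiclocal}), are both sound; the engine does prove exactly what you first claim, namely that every accumulation point of $\{\exp(s\mathfrak c)x_0\}$ lies on $\cO(x_0)$, and even in the plaque of the limit point. The proof breaks at the very next inference: ``thus each central one-parameter orbit is relatively compact in $\cO(x_0)$, \emph{whence} $Z\cap\Delta_{x_0}$ is a uniform lattice of $Z$ and $Zx_0$ is compact.'' Having $\overline{Zx_0}\subset\cO(x_0)$ does not make $Zx_0$ closed: the central orbit can wind densely inside a compact piece of $\cO(x_0)$ of larger dimension, exactly as an irrational line winds in a torus, and the engine is blind to this because all the accumulation then happens \emph{inside a single $\phi$-orbit}, where the would-be extra fixed points of the return map all project to the same point of the transversal, so no contradiction with isolation ever arises. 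Concretely, let $N$ be the Heisenberg group with $[X,Y]=Z_1$, let $H=\exp(\RR X)$, and suppose the stabilizer were $\Delta_{x_0}=\{\exp((n+m\alpha)X+mZ_1):n,m\in\ZZ\}$ with $\alpha$ irrational: then $H\cap\Delta_{x_0}$ is a uniform lattice in $H$, every conclusion your engine yields is satisfied, and yet $Z\cap\Delta_{x_0}=\{e\}$ and $Zx_0$ is a line winding densely in the $2$-torus $\exp(\RR X\oplus\RR Z_1)x_0$. (Such a stabilizer cannot in fact occur for an Anosov action --- but that is a \emph{consequence} of the lemma being proved, so you may not invoke it.) This hole is precisely what the paper's proof fills with an ingredient you do not use: Mahler's compactness criterion for lattices in $H$, applied to the conjugated stabilizers $z_n\Delta_0z_n^{-1}$ of the approximating points $z_nx_0$. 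That step produces an Anosov element fixing the limit point $x_\infty$ itself; hyperbolic isolation at $x_\infty$, combined with a uniform area bound on the number of intersections of the compact $H$-orbits with a transversal, then forces $z_nx_0=x_\infty$ for large $n$. Your own diagnosis --- that along non-central directions the fixing element $\exp(\mathrm{Ad}(z_n)\mathfrak b)$ escapes to infinity --- identifies exactly the difficulty that Mahler's criterion is there to resolve; routing around it through the center does not resolve it.

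The second gap is the inductive framework. Even granting $Zx_0$ compact, you cannot form the ``central Nil-extension $\overline M=Z\backslash M$'': a Nil-extension in the sense of Section \ref{sub:central} requires \emph{every} $Z$-orbit in $M$ to be compact, with continuously varying stabilizer lattices, and compactness of the single orbit $Zx_0$ gives nothing of the sort. In general the center does not act with compact orbits: already for $N$ abelian one has $Z=N$, and a generic orbit of an Anosov $\RR^k$-action (a suspension, say) is non-compact, so $Z\backslash M$ is not even a Hausdorff space, let alone a manifold carrying an Anosov action of $N/Z$. Thus the induction collapses at its first step whenever $N$ is non-abelian, and is vacuous when $N$ is abelian --- the one case where your step on central orbits genuinely works (there every point of $Nx_0$ is fixed by $b$, so the engine shows $Nx_0$ itself is closed), but where it already constitutes the entire statement. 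By contrast, the paper's argument is a direct, quotient-free analysis at the accumulation point of the $N(H)$-orbit, with no induction on $\dim N$; the only induction (via Lemma \ref{le:ascendingchain}) occurs afterwards, in the deduction of Theorem \ref{thm:compact_orbit} from the present lemma.
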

 \begin{proof}
 Assume that $\mathcal{A}_{0}$ is the chamber containing the Anosov element $\mathfrak a$ in $\mathcal{H}$. Let $\mathcal{O}_H(x_0)$ be the $H$-orbit of $x_0$, and  let $\Delta_0$ be the isotropy subgroup of $x_0$ by the $H$-action, that is $\Delta_0$ is the uniform lattice in $H$ which is given by $\Delta_0=\{h\in H:\ \phi^{h}(x_0)=x_0\}.$ Let $x_{\infty}$ be a point in the closure of the $N(H)$-orbit of $x_0,$ then there exists a sequence of elements $z_n$ of $N(H)$ such that
 $$
 \lim_{n\to \infty}\phi^{z_n}(x_0)=x_{\infty}.
 $$
 As  $z_nH=Hz_n$, then the $H$-orbit of $\phi^{z_n}(x_0)$ is exactly equal to $\phi^{z_n}(\mathcal{O}_H(x_0))$, in particular, it is compact. Hence, for every $n$ the isotropy subgroup of $\phi^{z_n}(x_0)$ by the $H$-action, which we denote by $\Delta_n$, is a uniform lattice in $H.$  Moreover, for every $n$ we have
 $$
 \Delta_n=z_n\Delta_0z_n^{-1}.
 $$

Since $N$ is nilpotent, the adjoint action of $z_n$ on $H$ is unimodular.
In particular, all the lattices $\Delta_n$ have the same co-area in $H$. Moreover, since the action on the compact manifold $M$ is locally free, there is a uniform bound from below for the size of elements
in $\Delta_n$.
 By Mahler's criterion for nilpotent Lie groups with a $\mathbb Q$-structure (which is the case for $H$ since it contains a cocompact lattice) (see \cite{mahler}), up to a subsequence, we can suppose that the sequence $\Delta_n$ converges to some uniform lattice $\Delta_{\infty}$ in $H$.  In particular, for every $h_{\infty}$ in $\Delta_{\infty},$ there is a sequence of elements  $h_n$ of $\Delta_n$ converging in $H$ to $h_{\infty}.$  Let $\gamma_n$ be the element in $\Delta_0$ such that $h_n=z_n\gamma_nz_n^{-1}$. Then, by continuity of the map $\phi: N \times M \to M$:

 $$
 \begin{array}{ccl}
h_{\infty}.x_{\infty}& = & \lim_{n\to \infty} h_n.({z_n}.x_0)\\
 & =& \lim_{n\to \infty} z_n\gamma_nz_n ^{-1}z_n. x_0\\
& =& \lim_{n\to \infty} z_n\gamma_n.x_0\\
 &=& \lim_{n\to \infty} z_n.x_0\\
 & = & x_{\infty}.
 \end{array}
 $$
 This shows that $\Delta_{\infty}$ lies in the isotropy subgroup of $x_{\infty}$ by the $H$-action and, consequently, the  $H$-orbit of $x_{\infty}$ is compact.

 We claim that
 $x_{\infty}$ is in the $\phi$-orbit of $x_0$.
 In fact, by Remark \ref{rmk:lattice_anosov}, there is  an Anosov element $a$ in $H$ such that: $\phi^{a}(x_{\infty})=x_{\infty}$ (in other words, $a$ is in $\Delta_{\infty}$) and $\log a$ is an element of the chamber $\mathcal{A}_0$.
 As defined in Remark~\ref{rk:periodiclocal},  we consider
   the local return map $\varphi: D' \rightarrow D$ along the $\phi^a$-orbit of $x_\infty$, where $D'$ and $D$ are small disks transverse to
   the $\phi$-orbit of $x_{\infty}$, and $x_{\infty}$ is the unique fixed point for $\varphi$, of hyperbolic type.

Let $V_H$ be a small neighborhood of the identity in $H$ such that the map $h \mapsto hx$ from $V_H$ to $M$ is injective for every element $x$ of $D$.
When $x$ describes $D'$, the pieces of orbits $V_H.x$  are disjoint one from the other, and have an area uniformly bounded from below.
Since the orbits $\cO_H(z_n.x_0)$ have all the same area, it follows that the number of intersections between $D$
and every  $\cO_H(z_n.x_0)$ is bounded from above, uniformly in $n$, by some integer $m$.

On the other hand, the points $\phi^{z_n}(x_0)$ accumulate on $x_{\infty}$, hence belong to $D'$ for $n$ sufficiently big.
Then, if $n$ is sufficiently big,  the iterates $\varphi^k(\phi^{z_n}(x_0))$ belongs to $D$ for all intergers $k$ between $0$ and $m+1$. By definition of $m$,
two of these iterates must be equal, but  since $x_\infty$ is a fixed point of $\varphi$ of hyperbolic type,  it is possible only if  $\phi^{z_n}(x_0) = x_\infty$.
Hence, $x_{\infty}$ belongs  to the $\phi$-orbit of $x_0$, and thus, our claim is verified.

 Let $g$ be an element of $N$ such that $x_{\infty}={g}.x_0$. This implies that $\Delta_{\infty}=g\Delta_0g^{-1}$, hence $g$ is an element of $N(H)$ since $\Delta_\infty$ and $\Delta_0$ are both lattices in $H$.

Thus, we have proved that every point $x_\infty$ in the closure of the $N(H)$-orbit of $x_0$ is in the $N(H)$-orbit of $x_0$: this orbit is compact.

%HEUM, LA SUITE N4EST PAS CORRECTE....
 %Finally, let $\Gamma_0$ be the isotropy subgroup of $x_0$ for the $N_G(H)$-action. Since the kernel of the adjoint action of the Levi factor $L$ on the radical $R$ contains no compact simple factor of $L$,  then %$\Gamma_0 \cap N$ is a uniform lattice for $N_N(H)$. Consequently the $N_N(H)$-orbit of $x_0$ is compact.
 \end{proof}

  \begin{proof}[Proof of Theorem \ref{thm:compact_orbit}]
One of the implication is a direct consequence of Remark \ref{rmk:lattice_anosov}. Let us prove the other implication: let $x_0$ be  an element of $M$ fixed by an Anosov  element $a$ in $N.$ We will show that the $\phi$-orbit of $x_0$ is compact.
 In fact, let $\mathfrak a$ be the element in $\mathcal{N}$ such that $\exp \mathfrak a=a$. We consider $H_0$, the subgroup of $N$ which is the one-parameter group associated to  $\mathfrak a.$ It is clear that the $H_0$-orbit of $x_0$ is compact (a circle). We define, inductively, $H_{i+1}=N(H_i)$ for all $i$ in $\mathbb{N}.$ By Lemma \ref{lem:orbit_normalizer} we have that the $H_i$-orbit of $x_0$ is compact for every $i$ in $\mathbb{N}.$ Hence, by Lemma \ref{le:ascendingchain}, there exists $k$ in $\mathbb{N}$ such that $N=H_k$ and the Theorem follows.
 \end{proof}

Let us fix a Haar measure on $N$, so that one can estimate the area of every $\phi$-orbit.

\begin{proposition}
\label{le:raghu}
For every $C>0$, there is only a finite number of compact $\phi$-orbits of area $\leq C$.
\end{proposition}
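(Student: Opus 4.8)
The plan is to argue by contradiction: assuming that there are infinitely many pairwise distinct compact orbits $\cO_n$ of area at most $C$, I will extract from them a limiting compact orbit and then use the local saddle picture around it to force a collision, contradicting distinctness. The argument runs in close parallel with the final ``claim'' in the proof of Lemma \ref{lem:orbit_normalizer}; the new ingredient is Mahler compactness applied to the isotropy lattices directly.

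First I would choose a point $x_n$ on each $\cO_n$ and, by compactness of $M$, pass to a subsequence with $x_n \to x_\infty$. Since $\cO_n$ is compact, the isotropy group $\Delta_n := \Delta_{x_n}$ is a cocompact lattice of $N$, and its covolume for the fixed Haar measure differs from the area of $\cO_n$ by a factor bounded above and below uniformly (the metric has bounded distortion on the compact manifold $M$); hence the covolumes are bounded by a constant depending only on $C$. Local freeness together with compactness of $M$ also provides a uniform lower bound on the size of the nontrivial elements of every isotropy group, for otherwise one would find $g_n \to e$ with $g_n \neq e$ and $\phi^{g_n}(x_n)=x_n$ while $x_n$ converges, contradicting local freeness at the limit. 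Since $N$ carries a $\QQ$-structure (it contains a cocompact lattice), Mahler's criterion \cite{mahler} applies, and after a further subsequence the $\Delta_n$ converge to a cocompact lattice $\Delta_\infty$ of $N$. Exactly as in Lemma \ref{lem:orbit_normalizer}, for any $h_\infty \in \Delta_\infty$ I pick $h_n \in \Delta_n$ with $h_n \to h_\infty$; continuity of the action gives $h_\infty.x_\infty = \lim_n h_n.x_n = \lim_n x_n = x_\infty$, so $\Delta_\infty$ fixes $x_\infty$ and the orbit $\cO_\infty := \cO_N(x_\infty)$ is compact.

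To conclude I would exploit hyperbolicity. By Remark \ref{rmk:lattice_anosov} the lattice $\Delta_\infty$ contains an Anosov element $a=\exp(\mathfrak a)$ with $\mathfrak a$ in the chamber $\cA_0$, and $x_\infty$ is a fixed point of $\phi^a$. Following Remark \ref{rk:periodiclocal}, I set up the first-return map $\varphi : D' \to D$ of the flow $\phi_{\mathfrak a}$ along $\cO_\infty$ on a small transversal disk $D$ through $x_\infty$, so that $x_\infty$ is its unique fixed point, of saddle type, and therefore an isolated periodic point of $\varphi$. Because the areas of the $\cO_n$ are bounded by $C$ while small pieces of orbit have area bounded below, the number of intersections of each $\cO_n$ with $D$ is bounded above by an integer $m$ independent of $n$, just as in the proof of Lemma \ref{lem:orbit_normalizer}. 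Replacing $x_n$ by a point of $\cO_n$ lying in $D'$ (possible for large $n$ since $x_n \to x_\infty \in D$), the iterates $\varphi^0(x_n),\dots,\varphi^{m+1}(x_n)$ all lie in $D$ and all belong to $\cO_n$, so by the bound $m$ two of them coincide; thus $x_n$ is a periodic point of $\varphi$ near the saddle $x_\infty$, which forces $x_n=x_\infty$. Hence $\cO_n=\cO_\infty$ for all large $n$, contradicting distinctness.

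As in Lemma \ref{lem:orbit_normalizer}, the main obstacle is the control of the return map: one must guarantee that the iterates $\varphi^k(x_n)$ remain defined and return to $D$, which rests on the compactness of the orbits $\cO_n$ and on the fact that near the saddle $x_\infty$ the only recurrence permitted is the fixed point itself. Everything else is a routine combination of Mahler's compactness for lattices in nilpotent Lie groups with the local hyperbolic normal form already established in Remark \ref{rk:periodiclocal}.
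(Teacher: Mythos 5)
Your proof is correct, and its first two thirds coincide with the paper's own argument: both proceed by contradiction, both obtain uniform discreteness of the isotropy lattices from local freeness and a covolume bound from the area bound (the paper leaves this second point implicit), both invoke the Mahler--Moskowitz criterion to produce a limit lattice $\Delta_\infty$ fixing the limit point $x_\infty$, and both use Remark \ref{rmk:lattice_anosov} together with Remark \ref{rk:periodiclocal} to exhibit $x_\infty$ as an isolated, saddle-type fixed point of a return map $\varphi$ on a small transversal. Where you genuinely diverge is the endgame. The paper exploits the element-wise convergence that Mahler already provides: choosing $a_n \in \Delta_n$ with $a_n \to a_\infty$ Anosov, the relations $\phi^{a_n}(x_n)=x_n$ show (after pushing the $x_n$ onto the transversal) that each $x_n$ is itself a \emph{fixed} point of the single return map $\varphi$, so infinitely many distinct fixed points accumulate at the isolated fixed point $x_\infty$ --- contradiction, with no counting needed. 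You instead discard the approximating elements $a_n$ and rerun the counting/pigeonhole trick of Lemma \ref{lem:orbit_normalizer}: the area bound caps $\#(\cO_n \cap D)$ by some $m$, continuity of $\varphi$ keeps the first $m+1$ iterates of $x_n$ defined and near $x_\infty$, and pigeonhole makes $x_n$ a $\varphi$-periodic point trapped near the saddle, hence equal to $x_\infty$. Both endgames are sound; the paper's is shorter because lattice convergence hands it the fixed points for free, while yours makes the role of the area hypothesis more transparent (it is used twice: for the covolume bound in Mahler and for the return count) and needs from Mahler only the existence of the limit compact orbit, not the approximation of its stabilizer by the $\Delta_n$. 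Two small remarks: the paper measures the area of orbits directly with the fixed Haar measure, so area and covolume coincide and your bounded-distortion comparison, though correct, is unnecessary; and to pass from ``two iterates $\varphi^i(x_n)=\varphi^j(x_n)$ coincide'' to ``$x_n$ is periodic'' you should either invoke injectivity of $\varphi$ (true, since it is a first-return/holonomy map of the orbit foliation), or simply observe that $\varphi^i(x_n)$ is then a periodic point near the saddle, hence equals $x_\infty$, which already forces $\cO_n=\cO_\infty$.
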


\begin{proof}
Assume by contradiction the existence of an infinite sequence of distinct compact orbits $\cO_n$
of area $\leq C$. For each of them, let $\Delta_n$ be the isotropy group of $\cO_n$:
it is a lattice in $N$.

Since $\phi$ is locally free, the length of elements of $\Delta_n$ is uniformly bounded from below,
independantly from $n$. Once more, by the Mahler's criterion extended to nilpotent Lie groups (\cite{mahler}), it ensures that, up to a subsequence,
the $\Delta_n$ converges to some lattice $\Delta_\infty$. In particular, for every $a_\infty$
in $\Delta_\infty$, there is a sequence
of elements $a_n$ of $\Delta_n$ converging in $N$ to $a_\infty$. Furthermore, according
to Remark \ref{rmk:lattice_anosov}, we can select $a_\infty$ to be Anosov.
Up to a subsequence, we can also pick up a sequence of elements $x_n$ in each
$\cO_n$ converging to some $x_\infty$ in $M$.
Then, since $\phi^{a_n}(x_n)=x_n$, at the limit we have $\phi^{a_\infty}(x_\infty)=x_\infty$.
Since $a_\infty$ is Anosov, the $\phi$-orbit $\cO_\infty$ of $x_\infty$ is compact.
Consider a local section $\Sigma$ to $\phi$ containing $x_\infty$: the first return map on $\Sigma$
along the orbit of $\phi^{a_\infty}$ is hyperbolic, admitting $x_\infty$ as an isolated fixed point.
On the other hand, by pushing slightly along $\phi$, we can assume without loss of generality
that every $x_n$ belongs to $\Sigma$. Since the $a_n$ converges to $a_\infty$, the $\phi^{a_n}$-orbit
of $x_n$ approximates the $\phi^{a_\infty}$-orbit of $x_\infty$, showing that the $x_n$ are also
fixed points of the first return map. It is a contradiction, since they accumulate to the isolated
fixed point $x_\infty$.
\end{proof}

\subsection{Spectral decomposition}
\label{sec:spectral}

 \begin{definition}[Nonwandering set of an Anosov element]
 \label{def:nonwandering}
 {\rm
 A point $x\in M$ is \textit{nonwandering} with respect to an Anosov element $\mathfrak a$
 if for any open set $U$ containing $x$ there is a real number $t>1,$
 such that $\phi^{\exp(t\mathfrak a)}(U)\cap U \neq \emptyset.$
 The set of all nonwandering points, with respect to $\mathfrak a$, is denoted by
 $\Omega(\mathfrak a)$.
 }
 \end{definition}

Recall that we have fixed a metric $| \cdot |$ on $\mathcal N = \cG,$ and than an Anosov subcone $\cC$ is strict if nonzero elements of $\overline{\cC}$ are Anosov (Definition \ref{def:subcone}).
 \begin{definition}[Nonwandering set of a strict Anosov subcone]
 \label{def:nonwandering}
 {\rm
 A point $x\in M$ is \textit{nonwandering} with respect to a strict Anosov subcone $\cC$
 if for any open set $U$ containing $x$ there is an element $\mathfrak a$ of $\cC$ such that $| \mathfrak a | \geq 1$ and $\phi^{\exp(\mathfrak a)}(U)\cap U \neq \emptyset.$
 The set of all nonwandering points, with respect to $\cC$, is denoted by
 $\Omega(\cC)$.
 }
 \end{definition}

Let $\cC$ be a strict Anosov subcone containing $\mathfrak a$. Observe that $\Omega(\mathfrak a)$ and $\Omega(\cC)$ are closed, and
the inclusion $\Omega(\mathfrak a) \subset \Omega(\cC)$ is obvious.
If the $\phi$-orbit of a point $x$ is compact, then the restriction of the action of $\mathfrak a$ on this orbit is a left translation on
a compact right quotient of $N$ by a lattice; hence this action preserves a volume form on the orbit and every point is nonwandering. The inclusions Comp$(\Phi) \subset \Omega(\mathfrak a) \subset \Omega(\cC)$ follow.
Inversely:

%The proof of the following proposition is classical.

  \begin{proposition}
 \label{prop:closCompactO}
  For any strict Anosov subcone $\cC$, the union Comp$(\phi)$ of compact orbits of $N$ is dense in $\Omega(\cC)$.
 \end{proposition}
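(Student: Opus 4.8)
The plan is to prove density of compact orbits in $\Omega(\cC)$ by a closing-lemma argument. Given a nonwandering point $x$ for the strict Anosov subcone $\cC$, I want to produce compact orbits accumulating on $x$. The key tool is Theorem~\ref{thm:compact_orbit}: a $\phi$-orbit is compact if and only if it contains a point fixed by an Anosov element. So it suffices to find, arbitrarily close to $x$, a point fixed by some Anosov element of $\cC$ (or of $N$). The definition of $\Omega(\cC)$ gives, for every neighborhood $U$ of $x$, an element $\mathfrak a \in \cC$ with $|\mathfrak a| \geq 1$ and $\phi^{\exp(\mathfrak a)}(U) \cap U \neq \emptyset$; that is, a point $y \in U$ with $\exp(\mathfrak a).y \in U$. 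The orbit of $y$ under $\phi^{\exp(\mathfrak a)}$ returns close to itself, and the hyperbolicity of $\exp(\mathfrak a)$ (it is $\xi$-Anosov, so its first-return map on a transversal is a hyperbolic saddle, as in Remark~\ref{rk:periodiclocal}) should let me correct $y$ to a genuine fixed point nearby.

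\textbf{First I would} set up the local picture. Fix $x \in \Omega(\cC)$ and a small transversal disk $D$ to the orbit foliation $\cO$ through a point near $x$, together with the local product structure of Theorem~\ref{thm:local product} giving transverse foliations $\cG^s$, $\cG^u$ on $D$ induced by $\cF^s$, $\cF^u$. For an element $\mathfrak a \in \cC$ producing a return $\exp(\mathfrak a).y$ close to $y$, I would express the associated return on $D$ as a map that contracts the $\cG^s$-direction and expands the $\cG^u$-direction, with contraction/expansion rates controlled by the Anosov constants $C, \lambda$ uniformly over the strict cone $\cC$ (using compactness of the set of unit-norm elements of $\overline{\cC}$, all of which are Anosov by strictness, and continuity of the splitting from Remark~\ref{rk.first}). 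The almost-returning point $y$ gives a point whose image under this return map is close to itself; a hyperbolic fixed-point argument (graph transform, or the standard contraction-expansion intersection argument) then yields a true fixed point $p$ of the return map close to $y$, hence close to $x$. This $p$ is fixed by $\exp(\mathfrak a')$ for an Anosov element $\mathfrak a'$ close to $\mathfrak a$, so by Theorem~\ref{thm:compact_orbit} its orbit is compact.

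\textbf{The hard part will be} making the closing construction uniform and ensuring the correcting element stays Anosov. Two subtleties demand care. First, in the flow case closing uses a single time; here the ``time'' is a Lie-algebra element $\mathfrak a$, and the return data involve both a transverse displacement in $D$ \emph{and} a displacement along the orbit direction (the ``variation of time'' map $\alpha$ of Remark~\ref{rk:periodiclocal}). I must arrange that after correcting the transverse coordinates, the orbit-direction discrepancy is absorbed by adjusting $\mathfrak a$ to a nearby $\mathfrak a'$ inside the cone $\cC$; this uses the openness of $\cA_\xi$ (Proposition~\ref{pro:opencone}) and the condition $|\mathfrak a| \geq 1$, which keeps $\mathfrak a$ bounded away from $0$ so the correction is a genuinely hyperbolic return rather than a near-identity one. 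Second, I need the hyperbolicity estimates to be uniform as $\mathfrak a$ ranges over the cone, so that the size of the produced fixed point's neighborhood does not degenerate; strictness of $\cC$ is exactly what guarantees this uniformity. With these uniform estimates in hand, the closing lemma produces, for each neighborhood $U$ of $x$, a compact orbit meeting $U$, which is precisely the asserted density.
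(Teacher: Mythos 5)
Your overall strategy -- use the almost-return supplied by the definition of $\Omega(\cC)$, close it to a genuine fixed point of a nearby Anosov element via the local product structure of Theorem~\ref{thm:local product}, and then conclude by Theorem~\ref{thm:compact_orbit} -- is exactly the strategy of the paper's proof. But your closing step has a genuine gap: it tacitly assumes the returning elements $\mathfrak a_n$ are \emph{long enough} for hyperbolicity to act as a one-step contraction. The uniform constants $C, \lambda$ that you correctly extract from strictness (compactness of the unit sphere of $\overline{\cC}$) only give that $d\phi^{\exp(\mathfrak a)}$ contracts $E^{ss}$ by the factor $Ce^{-\lambda|\mathfrak a|}$, which is smaller than $1$ only when $|\mathfrak a| > \log C/\lambda$. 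The definition of $\Omega(\cC)$ merely provides returns with $|\mathfrak a_n| \geq 1$, and nothing prevents \emph{all} of them from having bounded norm (think of a point fixed by $\exp(\mathfrak a)$ with $|\mathfrak a|=1$). In that regime the return map need not contract anything at the scale of the transversal, so the graph-transform/contraction argument you invoke does not produce a fixed point; the condition $|\mathfrak a| \geq 1$, which you cite as keeping the return ``genuinely hyperbolic,'' does not repair this.

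The missing idea is a dichotomy on the sequence of return norms, and it is precisely where strictness of $\cC$ is really used in the paper. If (after passing to a subsequence) the $|\mathfrak a_n|$ stay bounded, one does not close at all: one takes limits $\mathfrak a_n \to \mathfrak a \in \overline{\cC}^{\geq 1}$ and $g_n \to g_\infty$, and the almost-return relations converge to the exact statement that $x$ itself is fixed by $g_\infty^{-1}\exp(\mathfrak a)$; strictness guarantees that the limit $\mathfrak a$, which lies only in the \emph{closure} of the cone, is still Anosov, so $x \in \operatorname{Comp}(\phi)$ and density at $x$ is trivial. Only in the complementary case $|\mathfrak a_n| \to \infty$ does the paper run the contraction argument (first fixing a local stable leaf using the forward returns, then a local unstable leaf using the reversed cone $-\cC$, and intersecting the two). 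So your proposal needs this case distinction added, and your attribution of the role of strictness (uniformity of estimates) misses its essential use (Anosov-ness of limits in $\overline{\cC}$). Your other point of concern -- absorbing the displacement along the orbit direction -- is handled in the paper just as you suggest in spirit: the fixed point is fixed by a product $g\exp(\mathfrak a_n)$ with $g$ small, arranged to be Anosov after shrinking the flow-box neighborhood, rather than by an exponential of a corrected cone element.
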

 \begin{proof}
 Let $x\in \Omega(\cC).$ We denote by $\cC^{\geq 1}$ the subset of $\cC$ comprising elements with $|$-norm $\geq 1.$ For every $\varepsilon > 0$ let $D_\varepsilon$ be a small disc of size $\varepsilon$
 centered at $x$ and transverse to the orbit foliation $\cO,$ so that
 the restrictions $\cG^s$ and $\cG^u$ of the weak foliations to $D$ are product; any leaf of
 $\cG^s$ intersects every leaf of $\cG^u$ in one and only one point - the existence of such a disc is a corollary of Theorem \ref{thm:local product}.
We can assume that these discs are nested, i.e. that $D_\varepsilon \subset D_{\varepsilon'}$ if $\varepsilon \leq \varepsilon'.$
Let $U$ be a relatively compact symetric neighborhood
 of the identity in $G$ such that the application $U \times D_\varepsilon \to M$ mapping $(g, y)$ to $\phi^g(y)$ is a
 diffeomorphism onto its image.

Since $\mathcal C$ is an open  convex cone, reducing $U$ if necessary, one can assume that  for any $\mathfrak a \in \cC^{\geq 1}$ and any $g$ in the closure of $U,$ the product $g\exp(\mathfrak a)$ is Anosov.
Moreover, there is a uniform bound $C>0$ such that the restriction of $d\phi^{g\exp(\mathfrak a)}$ to $E^{ss}$ (respectively to $E^{uu}$) is $Ce^{-\lambda | \mathfrak a |}$-contracting (respectively $C^{-1}\exp(\lambda | \mathfrak a |)$-expanding).

 By hypothesis, for any $n$, there is an element $x_n$ in $D_{1/n}$ and an element $\mathfrak a_n$ in $\cC$ such that $\phi^{\mathfrak a_n}(x_n)$ lies in the neighborhood described above, i.e.
has the form $\phi^{g_n}(y_n)$ where $y_n$ lies in $D_{1/n}$ and $g_n$ in $U$.

We consider first the case where the real numbers $| \mathfrak a_n|$
are bounded from above, i.e. that, up to a subsequence, $\mathfrak a_n$ converge to some element $\mathfrak a$ of $\overline{\cC}^{\geq 1}.$ Since $\cC$ is strict, $\mathfrak a $ is Anosov. Up to a subsequence, $x$ is a fixed point
of  $g_\infty^{-1}\exp(\mathfrak a)$ where $g_\infty$ is a limit of the $g_n$'s. Hence, $x$ fixed by an Anosov element, and thus contained in Comp$(\phi)$. The Lemma is proved in this case.

We are thus reduced to the other case, i.e. the case where, up to a subsequence, the real numbers $| \mathfrak a_n|$ converge to $+\infty$. Consider the local unstable leaf $\cG^{s}(x_n)$: its image
by the first return map from a neighborhood of $x_n$ in $D_{1/n}$ into a neighborhood of $y_n$ in $D_{1/n}$ is contracted, and therefore this first return map is well-defined. The composition of this map with the holonomy along $\cG^u$ defines a contracting map from  $\cG^{s}(x_n)$ into itself, which therefore admits a fixed point. It means that there is an element $h_n$ of $U$ such that $h_n\exp(\mathfrak a_n)$
maps a point $z_n$ of $D_{1/n}$ to a point $p_n$ in its unstable leaf $\cG^{u}(z_n).$

Now apply the same argument now to the reversed subcone $-\cC$, and replacing $x_n$ by $p_n$: since the stable/unstable foliations are switched when we replace $\cC$ by its opposite,
we obtain that the first return map preserves a local stable leaf.

The intersection of theses local stable and unstable leaves  is now a point fixed by an element of the form $g\exp(\mathfrak a_n)$ with $g$ in $\bar{U}$, hence Anosov.
Therefore $D_{1/n}$ contains a point fixed by an Anosov element, hence lying in Comp$(\phi)$. The Proposition follows since $n$ is arbitrary.
 \end{proof}

 \begin{remark}
\label{rk:omegaegal}
 It follows by proposition \ref{prop:closCompactO} that we have:
$$\Omega(\mathfrak a) = \Omega(\cC) = \overline{\operatorname{Comp}(\phi)}$$
In particular, the nonwandering sets $\Omega(\mathfrak a)$  and $\Omega(\cC)$ are independent
 from the choice of the Anosov element $\mathfrak a,$ the Anosov subcone $\cC,$ and even of the preferred splitting $\xi$ and preferred Anosov chamber $\mathcal A_0.$
 Hence we can denote the nonwandering set simply by $\Omega(\phi)$ or $\Omega.$
 \end{remark}

%We now prove a spectral decomposition theorem for Anosov actions of nilpotent Lie groups; however we have to adapt
%the notion of topological transitivity for basic sets: it is not reasonable to expect that the action of the

We will need the following refinement of Theorem \ref{thm:compact_orbit}. Recall Definition \ref{def:subcone}:

 \begin{lemma}
 \label{le:compactcausal}
For any Anosov subcone $\cC$, and any compact orbit $\cO_x$, there is an element of $\cC$ fixing an element in $\cO_x$.
 \end{lemma}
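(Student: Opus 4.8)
The plan is to reduce the statement to an elementary fact: a full lattice in a vector space must meet every nonempty open cone. First I would record the structure of a compact orbit. Since $\phi$ is locally free and $\cO_x$ is compact, the isotropy group $\Delta_x=\{g\in N:\ gx=x\}$ is discrete and cocompact, i.e. a uniform lattice of $N$, and $g\Delta_x\mapsto gx$ identifies $N/\Delta_x$ with $\cO_x$ $N$-equivariantly. Because $N$ is simply connected nilpotent, $\exp:\mathcal N\to N$ is a diffeomorphism with inverse $\log$, and an element $\exp(\mathfrak a)$ fixes the point $x$ exactly when $\exp(\mathfrak a)\in\Delta_x$, that is, when $\mathfrak a\in\log\Delta_x$.

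Next I would pass to a log-lattice. By \cite[Theorem (5.1)]{loglattice}, $\Delta_x$ contains a finite-index subgroup $\Delta'$ for which $L:=\log\Delta'$ is a lattice of the vector space $\mathcal N=\mathcal G$; being cocompact it is of full rank and spans $\mathcal N$ over $\RR$. Every $\ell\in L$ then satisfies $\exp(\ell)\in\Delta'\subset\Delta_x$, so the time-one map $\phi^{\exp\ell}$ fixes $x\in\cO_x$. Hence it suffices to produce a single point of $L$ lying in the given cone $\cC$, and the whole question becomes: does $\cC$ meet $L$?

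The core of the argument is to answer this affirmatively. As $\cC$ is open and nonempty, there exist $\mathfrak a_0\in\cC$ and $\rho>0$ with the $|\cdot|$-ball $B(\mathfrak a_0,\rho)$ contained in $\cC$; the cone property $t\cC=\cC$ then gives $B(t\mathfrak a_0,t\rho)=t\,B(\mathfrak a_0,\rho)\subset\cC$ for every $t>0$, so $\cC$ contains balls of arbitrarily large radius. Let $R$ be the covering radius of $L$, finite because $\mathcal N/L$ is a compact torus, so that every ball of radius $>R$ contains a point of $L$. Choosing $t$ with $t\rho>R$ yields a point $\ell\in L\cap\cC$. Then $\mathfrak a:=\ell$ is an element of $\cC$ whose time-one map $\phi^{\exp\mathfrak a}$ fixes $x$, which proves the Lemma. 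This is the same mechanism already invoked in Remark \ref{rmk:lattice_anosov}.

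The only delicate point, which I would flag as the potential obstacle, is precisely this last step: a priori an Anosov subcone may be arbitrarily thin, so one might fear it avoids $L$ entirely. The resolution is that the inradius of an open cone grows linearly with the distance to its apex, whereas the covering radius of $L$ is bounded; thus the lattice cannot be evaded. Note that the Anosov hypothesis on $\cC$ plays no role in the existence of the fixed point — it only guarantees, as a bonus, that the element $\mathfrak a$ we obtain is itself an Anosov element.
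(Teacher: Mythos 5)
Your proof is correct and follows essentially the same route as the paper: the paper's proof of this lemma simply repeats the argument of Remark \ref{rmk:lattice_anosov} with the chamber $\cA_0$ replaced by $\cC$, namely passing to a log-lattice $\log\Delta'$ of the isotropy group of a point of $\cO_x$ and intersecting it with the open cone $\cC$. Your covering-radius argument merely makes explicit the step the paper leaves implicit (that a nonempty open cone must meet a full-rank lattice), so the two proofs coincide in substance.
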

\begin{proof}
It follows immediatly by repeating the argument used in the proof of Remark \ref{rmk:lattice_anosov}, replacing $\cA_0$ by $\cC$.
\end{proof}

\begin{proposition}
\label{pro:lambdastable}
Let $x,$ $y$ be two elements of $\Omega$. Assume that the intersections $\cF^u(x)\cap \cF^s(y)$ and $\cF^s(x)\cap \cF^u(y)$ are non-empty.
Then, these intersections are both contained in $\Omega$.
\end{proposition}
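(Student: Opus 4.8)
The plan is to exploit the symmetry of the hypothesis. The fact that \emph{both} $\cF^u(x)\cap\cF^s(y)$ and $\cF^s(x)\cap\cF^u(y)$ are nonempty produces a heteroclinic cycle between the orbits of $x$ and $y$, and this cycle, glued at its two corners by the recurrence coming from $x,y\in\Omega$, is exactly what manufactures compact orbits near the intersection points. Since $\Omega=\overline{\operatorname{Comp}(\phi)}$ by Remark~\ref{rk:omegaegal}, and both weak foliations are $G$-invariant and orbit-saturated, it suffices to fix one point $z\in\cF^u(x)\cap\cF^s(y)$ and show that every neighbourhood of $z$ contains a point fixed by an Anosov element; such a point lies on a compact orbit by Theorem~\ref{thm:compact_orbit}, whence $z\in\overline{\operatorname{Comp}(\phi)}=\Omega$. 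A point $w\in\cF^s(x)\cap\cF^u(y)$ is then handled by the symmetric argument, exchanging $x\leftrightarrow y$ and stable $\leftrightarrow$ unstable. Using the preferred Anosov element $\mathfrak a_0\in\cA_0$, the inclusion $z\in\cF^u(x)$ says that the negative $\phi_{\mathfrak a_0}$-orbit of $z$ is asymptotic to $\cO(x)$, while $z\in\cF^s(y)$ says the positive one is asymptotic to $\cO(y)$; symmetrically the orbit of $w$ runs from $\cO(y)$ in the past to $\cO(x)$ in the future. One thus reads off a directed cycle $\cO(x)\rightsquigarrow\cO(y)\rightsquigarrow\cO(x)$, traversed \emph{entirely in positive time}, whose two transition legs are the orbits of $z$ and of $w$, and whose two corners sit over $\cO(x)$ and $\cO(y)$.

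Next I would build a saddle first-return map near $z$. At the corner over $y$ the cycle arrives along the weak stable direction (via $z$) and must leave along the weak unstable direction (via $w$); the local product structure of Theorem~\ref{thm:local product}, together with a near-return of $y$ to itself — available because $y\in\Omega$ — allows this turn, and likewise at the corner over $x$ using $x\in\Omega$. Concretely, choose a small disc $D$ through $z$ transverse to $\cO$ as in Remark~\ref{rk:periodiclocal}, so that the restrictions $\cG^s,\cG^u$ of the weak foliations form a product. Following the cycle — flow forward along the $z$-leg into a neighbourhood of $y$, turn the corner there using a near-return $\exp(\mathfrak c_y)$ with $\mathfrak c_y\in\cC$, flow forward along the $w$-leg into a neighbourhood of $x$, and turn that corner using $\exp(\mathfrak c_x)$ — defines on a subdisc a first-return map to $D$ realized by an element $g\exp(\mathfrak b)$ with $g$ near the identity and $\mathfrak b\in\cC$ large. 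Since every leg and every corner is traversed forward in $\cC$, the accumulated element $\mathfrak b$ is a large element of $\cC$, so the uniform hyperbolicity estimates force this map to contract $\cG^s$ and expand $\cG^u$: it is a saddle in the sense of Remark~\ref{rk:periodiclocal}.

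I would then produce its fixed point exactly as in the proof of Proposition~\ref{prop:closCompactO}. The return map composed with the $\cG^u$-holonomy is a contraction of a local $\cG^s$-leaf and hence has a fixed leaf; the same applied to the reversed cone $-\cC$ gives the transverse invariant $\cG^u$-leaf; and their intersection is a point $z'\in D$, arbitrarily close to $z$, fixed by the Anosov element $g\exp(\mathfrak b)$. By Theorem~\ref{thm:compact_orbit} the orbit of $z'$ is compact, so $z'\in\operatorname{Comp}(\phi)$, and letting the disc shrink yields $z\in\overline{\operatorname{Comp}(\phi)}=\Omega$.

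The main obstacle is not the idea but the bookkeeping of the multidimensional ``time''. One must verify that the first-return map along the whole cycle is well defined on a genuine subdisc of $D$ and that the \emph{net} effect of the two long transition legs and the two corner near-returns is uniformly hyperbolic, so that the contraction argument of Proposition~\ref{prop:closCompactO} applies verbatim. This is where the local product structure of Theorem~\ref{thm:local product} and the uniform estimates underlying the Anosov property do the real work, and it is here that the hypotheses $x,y\in\Omega$ enter in an essential way: they supply precisely the recurrence that lets the cycle turn each of its two corners.
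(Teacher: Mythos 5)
Your overall architecture (heteroclinic cycle, strips stretched along the two legs, composition of the displacements into a single large element of the cone $\cC$, then the contraction argument of Proposition \ref{prop:closCompactO} to manufacture the returning point) is the right one, and it is essentially the paper's. But there is a genuine gap exactly at the point you yourself flag as where the hypothesis $x,y\in\Omega$ enters: the corners cannot be turned with ``a near-return of $y$ to itself'' coming from $y\in\Omega$. Nonwandering only gives, for each neighbourhood $V$ of $y$, \emph{some} point of $V$ returning to $V$ under a large cone element; it gives no return of $y$ itself, and the returning points and their images bear no relation to the specific leaves $\cF^s(y)$ and $\cF^u(y)$ on which $z$ and $w$ lie. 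The inclination-lemma mechanism you need at the corner over $y$ --- forward images of a disc transverse to $\cF^s(y)$ sweeping across a neighbourhood of $w$ --- requires an element of $N$ that fixes $y$ (or at least a point of $\cO(y)$) and expands $\cF^{uu}(y)$; that is periodicity, not nonwandering. Note also that $\cO(y)$ need not be compact, so ``the $z$-leg is forward asymptotic to $\cO(y)$'' and ``the $w$-leg is backward asymptotic to $\cO(y)$'' do not even place the end of one leg close to the start of the other: the two legs may approach far-apart regions of that non-compact orbit. So the cycle you describe does not close up as stated.

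The missing step is the paper's first move: reduce to the case $x,y\in{\rm Comp}(\phi)$. Compact orbits are dense in $\Omega$ (Proposition \ref{prop:closCompactO}), and by continuity of the local intersection maps $(x,y)\mapsto \cF^u(x)\cap\cF^s(y)$ coming from Theorem \ref{thm:local product}, replacing $x,y$ by nearby periodic points $x',y'$ produces intersection points converging to $z$ and $w$; since $\Omega$ is closed, it suffices to treat periodic corners. Then Lemma \ref{le:compactcausal} provides Anosov elements $\exp(\mathfrak a),\exp(\mathfrak b)$ of the strict cone $\cC$ fixing points of $\cO(x)$ and $\cO(y)$, the corners really are hyperbolic fixed points, and your strip/saturation argument goes through: the total displacement is $g\exp(p\mathfrak b)\exp((k+\ell)\mathfrak a)$ with $g$ bounded and $k,\ell,p$ large, which lies in $\exp(\cC)$ because $\cC$ is open. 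With that repair the rest of your plan is correct --- indeed slightly stronger than needed, since once a point of $U$ returns to $U$ under an element of $\cC$ you already have $z\in\Omega(\cC)=\Omega$ by Remark \ref{rk:omegaegal}, without producing an actual fixed point in $U$.
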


\begin{proof}
The argument is classical, but the proof more oftenly involves the notion of pseudo-orbits, that we prefer to avoid here (see for exemple \cite[Proposition $8.11$]{shubook}).
Since ${\rm Comp}(\phi)$  is dense in $\Omega,$ and by continuity of the maps $(x, y) \mapsto \cF^u(x)\cap \cF^s(y)$ (cf. Theorem \ref{thm:local product})
on can assume that $x$ and $y$ are in ${\rm Comp}(\phi)$.

Let $\cC$ be a strict Anosov subcone in our preferred Anosov chamber $\cA_0$ (hence elements of $\cC$ preserve the foliations $\cF^{ss}$ and $\cF^{uu}$).
By Lemma \ref{pro:lambdastable}, one can assume that $x$ is fixed by some element $\mathfrak a$ of $\cC,$ and $y$ by some element $\mathfrak b$ of $\cC.$

Let $p$ be a point in $\cF^{uu}(x)\cap \cF^s(y)$ and let $U$ be a small product neighborhood of $p.$ We aim to show that some non trivial element
of $\cC$ maps a point in $U$ into $U.$

For $k$ big enough, $D' = \phi_{-k\mathfrak a}(D)$ contains the point $p$. Reducing $U$ if necessary, one can assume that $U$
is the saturation under some arbitrarly small neighborhood $V$ of $e$ in $N$ of an open neighborhood $\overline{U}$ of $p$ in $D'$ (see Figure \ref{default}).

Let now $q$ be a point in $\cF^{ss}(x)\cap \cF^u(y)$. Similarly, some iterate $D'' = \phi_{\ell\mathfrak a}(D)$ with $k>0$ contains $q$. Let $W$ be a neighborhood of
$q$ as depicted in Figure \ref{default}. In this figure, we draw two hatched regions $W'$ and $W''$ which have the following properties:

-- $W' \subset \overline{U},$

-- $W'' \subset W,$

-- $W'' = \phi_{(k+\ell)\mathfrak a}(W'),$

-- $W'$ is saturated by the restriction $\cG_U^{s}$ of $\cF^s$ to $\overline{U},$

-- $W''$ is saturated by the restriction $\cG_W^{u}$ of $\cF^u$ to $W.$

We furthermore select $W$ so that it is a product neighborhood, i.e. such that every leaf of $\cG^s_W$ intersects every leaf of $\cG^u_W.$

\begin{figure}[htb]
\begin{center}
\includegraphics[scale=0.33]{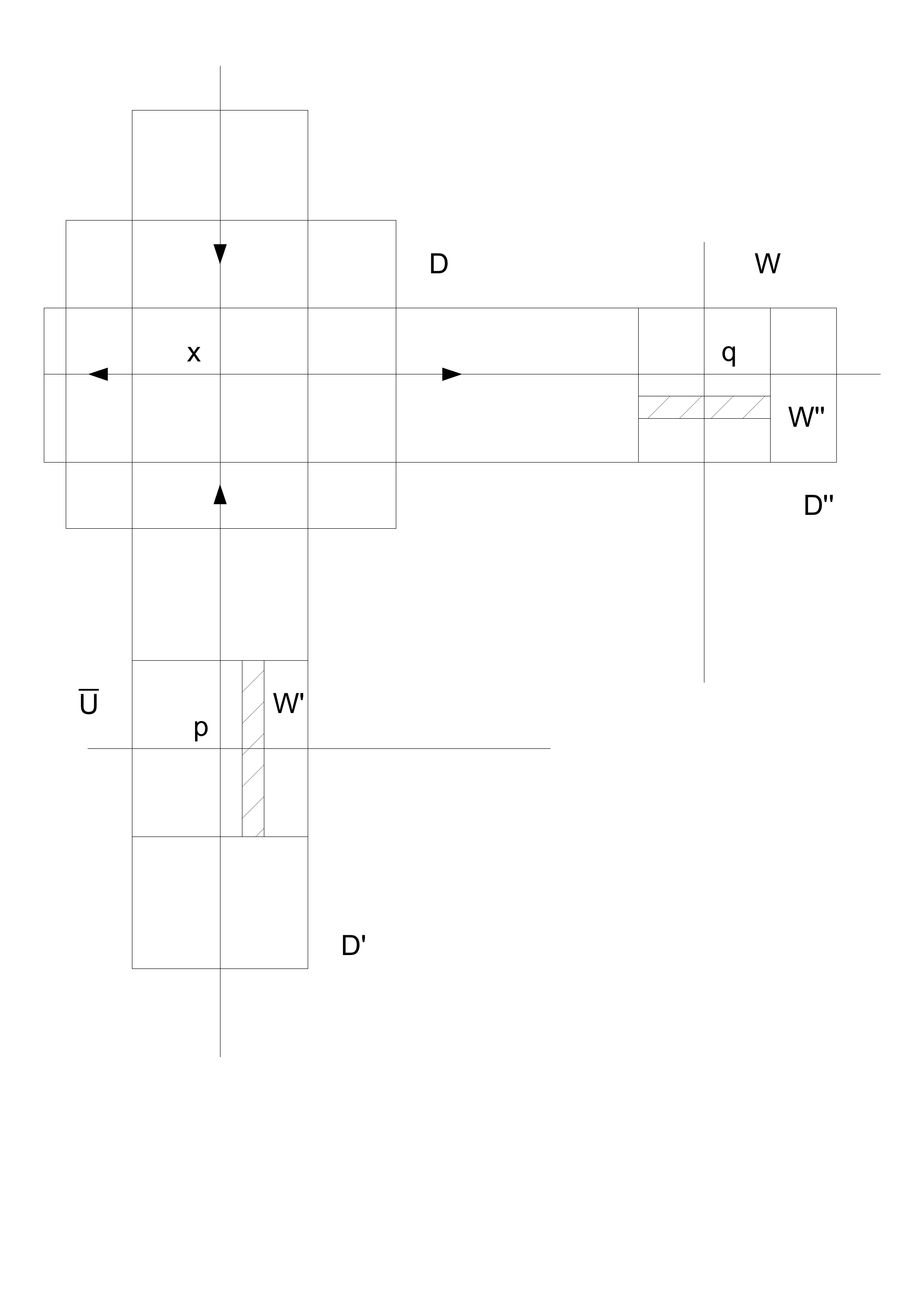}
\caption{Iterations of local sections.}
\label{default}
\end{center}
\end{figure}

We now replace $y$ by some $N$-iterate of itself so that $q$ lies in $\cF^{ss}(y).$  Then we have $p = g.p'$, where $p$ is a point in $\cF^{ss}(y) \cap \cF^u(x)$ and $g$ some element of $N.$
By repeating the argument above, we get a region $Z'$ in $W$, saturated by the foliation $\cG^s_W$, which is mapped by a positive iterate $\phi_{p\mathfrak b}$
to a region $Z''$ of $g^{-1}(\overline{U}).$

The closer $U$ is, the bigger are the integers $k$, $\ell$ and $p$. Hence one can assume that $g.\exp(p\mathfrak b)\exp((k+\ell)\mathfrak a)$ is the image under $\exp$ of some
element $\mathfrak c$ of $\cC^{\geq 1}.$
Since $W''$ is $\cG_W^u$-saturated, $Z'$ is $\cG_W^s$-saturated, and $W$ a product neighborhood, the intersection $W'' \cap Z'$ is non empty. It follows that any point
in $W' \cap \phi_{-(k+\ell)\mathfrak a}(W'' \cap Z')$ is mapped by $\phi_{\mathfrak c}$ to a pont in $gZ'' \subset \overline{U} \subset U$ (see Figure \ref{default2}).
The Proposition is proved.

\begin{figure}[htb]
\begin{center}
\includegraphics[scale=0.33]{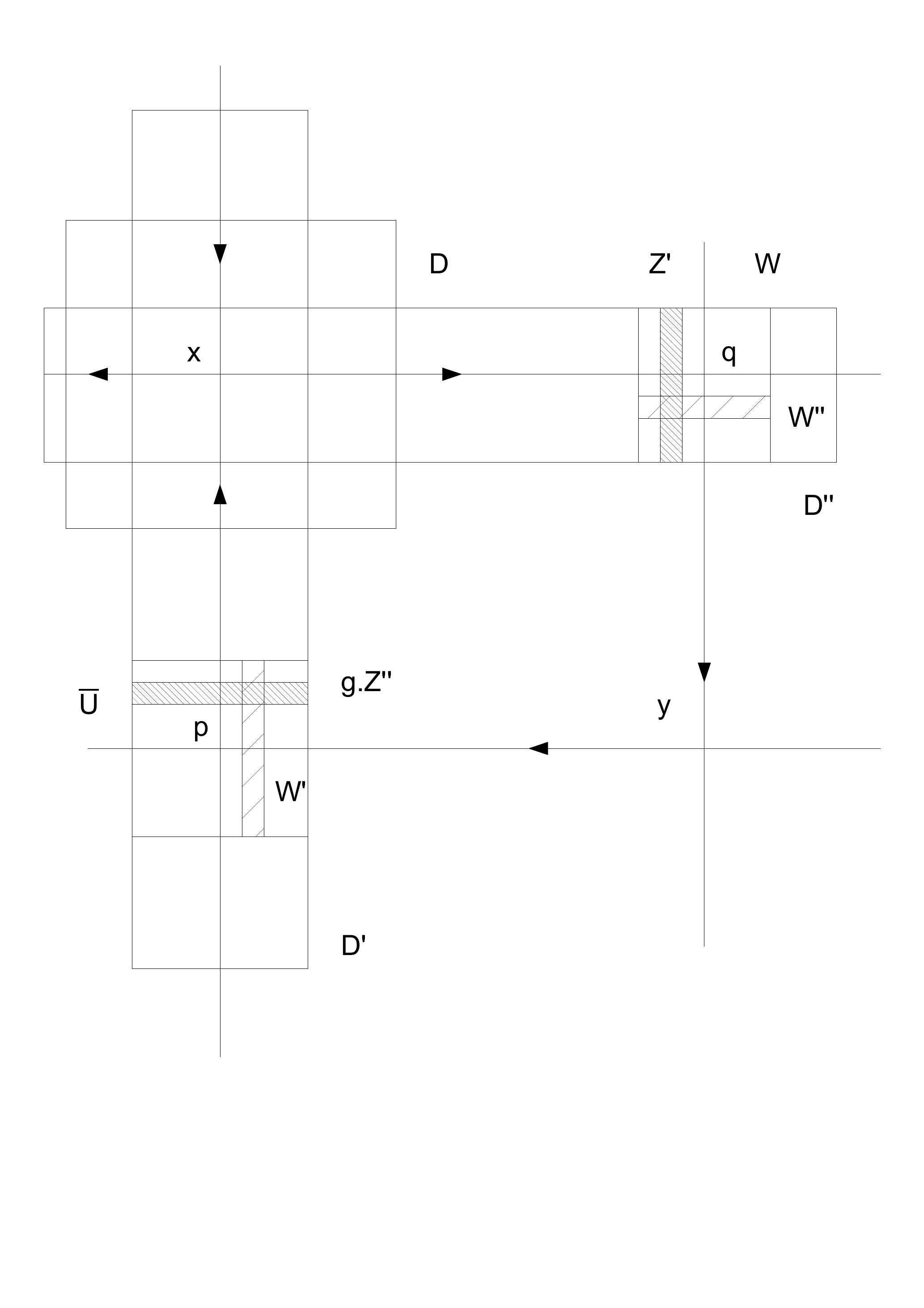}
\caption{A schematic view of the argument.}
\label{default2}
\end{center}
\end{figure}
\end{proof}

 \begin{theorem}[Spectral decomposition]
 \label{thm:espdecomp}
 Let $M$ be a closed smooth manifold and let $\phi$ be an Anosov action on $M$.
 The nonwandering set of $\phi$ can be partitioned into a finite number of
 $\phi$-invariant closed subsets, called basic blocks:
 $$
 \Omega = \bigcup_{i=1}^{\ell} \Lambda_i
 $$
 such that for every Anosov subcone $\cC$, every $\Lambda_i$ is $\cC$-transitive, i.e. for every open subsets $U$, $V$ in
 $M$ intersecting $\Lambda_i,$ there is an element $x$ of $U \cap \Lambda_i$ whose $\cC$-orbit in the meaning of Definition \ref{def:subcone} meets $V \cap \Lambda_i.$ In particular, there is an element of $\Lambda_i$ whose $\cC$-orbit is dense in $\Lambda_i.$
 \end{theorem}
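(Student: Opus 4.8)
The plan is to reproduce Smale's construction of the spectral decomposition, using compact orbits in the role of periodic points and the $G$-invariant weak foliations $\cF^s$, $\cF^u$ (Theorem \ref{thm:inv_foliation}) in the role of the stable/unstable manifolds. Since ${\rm Comp}(\phi)$ is dense in $\Omega$ (Proposition \ref{prop:closCompactO}, cf. Remark \ref{rk:omegaegal}), I would first define a relation on compact orbits and then take the basic blocks to be the closures of its equivalence classes. For two compact orbits $\cO$, $\cO'$, set $\cO \sim \cO'$ whenever $\cF^u(\cO) \cap \cF^s(\cO') \neq \emptyset$ and $\cF^s(\cO) \cap \cF^u(\cO') \neq \emptyset$; this makes sense because the weak leaves through a compact orbit are well defined and $G$-invariant. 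Reflexivity is immediate (each orbit lies in both of its own weak leaves) and symmetry is built into the definition.

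The main obstacle is \emph{transitivity} of $\sim$, which I would establish by an inclination ($\lambda$-lemma) argument. Suppose $\cO_1 \sim \cO_2$ and $\cO_2 \sim \cO_3$; pick $a \in \cF^u(\cO_1) \cap \cF^s(\cO_2)$ and $c \in \cF^u(\cO_2) \cap \cF^s(\cO_3)$. By Lemma \ref{le:compactcausal} choose an element $\mathfrak a$ of a strict Anosov subcone $\cC \subset \cA_0$ fixing a point of $\cO_2$; by Remark \ref{rk:periodiclocal} the associated first-return map on a transversal has a hyperbolic fixed point, with $\cG^s$, $\cG^u$ the traces of $\cF^s$, $\cF^u$. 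A small piece of $\cF^u(\cO_1)$ through $a$ is transverse to $\cF^s(\cO_2)$ (local product structure, Theorem \ref{thm:local product}); its forward iterates under $\phi^{\exp(t\mathfrak a)}$ therefore $C^1$-accumulate on $\cF^u(\cO_2)$, in particular near $c$. Since $c \in \cF^s(\cO_3)$ and $\cF^u$, $\cF^s$ meet in local product fashion, these iterates — which lie in $\cF^u(\cO_1)$ — must cross $\cF^s(\cO_3)$, giving $\cF^u(\cO_1) \cap \cF^s(\cO_3) \neq \emptyset$. The symmetric argument yields the other intersection, so $\cO_1 \sim \cO_3$. The delicate point here is the passage from the single-map $\lambda$-lemma to the normally hyperbolic orbit, which is exactly what the return-map description of Remark \ref{rk:periodiclocal} makes rigorous.

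Letting $C_1, C_2, \dots$ be the $\sim$-classes, I set $\Lambda_i := \overline{C_i}$; each is closed and, since $\sim$ is defined through $G$-invariant data, $\phi$-invariant. The key structural fact is that each $\Lambda_i$ is \emph{relatively open} in $\Omega$. Indeed, if $x \in \Lambda_i$ there is a compact orbit $\cO \in C_i$ near $x$; any $z \in \Omega$ close to $x$ is approximated (Proposition \ref{prop:closCompactO}) by compact orbits $\cO'$ close to $\cO$, and local product structure (Theorem \ref{thm:local product}) forces $\cF^u(\cO) \cap \cF^s(\cO') \neq \emptyset$ and $\cF^s(\cO) \cap \cF^u(\cO') \neq \emptyset$, so $\cO' \sim \cO$ and hence $\cO' \subset C_i$; thus $z \in \overline{C_i} = \Lambda_i$. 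The same argument shows the $\Lambda_i$ are pairwise disjoint (a common point would force the nearby compact orbits to lie in two classes at once). Consequently the $\Lambda_i$ form a partition of the compact set $\Omega$ into relatively open sets, so by compactness there are only finitely many, say $\Lambda_1,\dots,\Lambda_\ell$.

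It remains to verify $\cC$-transitivity of each $\Lambda_i$. Given open sets $U$, $V$ meeting $\Lambda_i$, density provides compact orbits $\cO \subset U$ and $\cO' \subset V$ with $\cO, \cO' \in C_i$, hence $\cO \sim \cO'$; choose $p \in \cF^u(\cO) \cap \cF^s(\cO')$, which lies in $\Omega$ by Proposition \ref{pro:lambdastable}, and in $\Lambda_i$ by the relative openness just proved. Fixing $\mathfrak c \in \cC$ in the chamber $\cA_0$, the point $x := \phi^{\exp(-T\mathfrak c)}(p)$ converges to $\cO \subset U$ as $T \to +\infty$ (since $p \in \cF^u(\cO)$), while $\phi^{\exp(S\mathfrak c)}(x) = \phi^{\exp((S-T)\mathfrak c)}(p)$ converges to $\cO' \subset V$ for $S - T$ large (since $p \in \cF^s(\cO')$). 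For suitable $T$ and $S$ this yields $x \in U \cap \Lambda_i$ and a single element $(S-T)\mathfrak c \in \cC$ carrying $x$ into $V \cap \Lambda_i$; as these points stay in the invariant relatively open set $\Lambda_i$, the $\cC$-orbit of $x$ meets $V \cap \Lambda_i$. Since $\cC$ was arbitrary, a standard Baire-category argument over a countable basis of $\Lambda_i$ then produces a point with dense $\cC$-orbit, completing the proof.
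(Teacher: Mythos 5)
Your construction of the decomposition itself is essentially the paper's: the relation $\cO \sim \cO'$ on compact orbits, transitivity of $\sim$ via an inclination-lemma argument near a compact orbit, and finiteness plus relative openness of the classes via the local product structure all track the paper's proof correctly (the transversality you worry about is in fact automatic, since $T\cF^u = T\cO \oplus E^{uu}$ and $T\cF^s = T\cO \oplus E^{ss}$ span $TM$ at any common point). The genuine gap is in the $\cC$-transitivity step. You assert that density of ${\rm Comp}(\phi)$ in $\Lambda_i$ ``provides compact orbits $\cO \subset U$ and $\cO' \subset V$'': it does not. Density gives compact orbits \emph{meeting} $U$ and $V$; a compact orbit is a $k$-dimensional nilmanifold $\Delta\backslash N$ whose volume is bounded below (local freeness), so it is never contained in a small open set. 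Consequently your two convergence claims fail: $\phi^{\exp(-T\mathfrak c)}(p)$ converges to the \emph{set} $\cO$, not to the point of $\cO \cap U$, and likewise for $\cO'$ and $V$. This cannot be patched by choosing $p$ better: the trajectory $\phi^{\exp(-T\mathfrak c)}(p)$ shadows the trajectory of a single point of $\cO$ under the one-parameter nilflow $t \mapsto \exp(-t\mathfrak c)$ restricted to $\cO$, and the closure of that trajectory is in general a proper sub-nilmanifold of $\cO$ which may miss $\cO \cap U$ entirely (take $N = \RR^2$, $\cO$ a $2$-torus, $\mathfrak c$ projecting to a rational direction); symmetrically for the forward trajectory and $\cO' \cap V$. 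For $k=1$ your argument is the classical flow argument and works because compact orbits are circles on which the flow is periodic; for $k \geq 2$ a single one-parameter subgroup is simply not enough.

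This is precisely why the paper's argument is more elaborate. It uses Lemma \ref{le:compactcausal} (via Remark \ref{rmk:lattice_anosov}) to produce an element $\mathfrak a \in \cC$ fixing the chosen point $x \in U$ \emph{itself} — the isotropy group of every point of a compact orbit is a uniform lattice whose logarithm meets the open cone $\cC$ — then an Anosov element fixing the chosen point $y \in V$ to push a heteroclinic point of $\cF^u(x)$ into $V$, and finally a Lie-product-formula argument to realize the connecting group element $\exp(-\mathfrak g)a^n$ inside the semigroup of products of $\exp(\cC)$-elements, which is exactly what membership in a $\cC$-orbit demands. (Your route could conceivably be salvaged by choosing $\mathfrak c \in \cC$ so that the induced nilflows on both $\cO$ and $\cO'$ are minimal, invoking Auslander--Green--Hahn theory, but that is substantial extra input absent from your write-up.) A secondary omission: you fix $\mathfrak c \in \cC$ ``in the chamber $\cA_0$'', so your argument at best treats subcones $\cC \subset \cA_0$; the theorem claims $\cC$-transitivity for \emph{every} Anosov subcone, and subcones in other chambers expand and contract different splittings. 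The paper closes this with a final argument showing the decomposition is independent of the chosen splitting and chamber, which you would also need to supply.
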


 \begin{proof}
 Let ${\rm Comp}(\phi)$ be the set of compact orbits of $\phi$. By Proposition \ref{prop:closCompactO}
 we have $\overline{{\rm Comp}(\phi)}=\Omega(\phi).$
  We define a relation on ${\rm Comp}(\phi)$ by: $x\sim y$ if and
  only if $\cF^u(x)\cap \cF^s(y)\neq \emptyset$ and $\cF^s(x)\cap \cF^u(y)\neq \emptyset$
  with both intersections transverse in at least one point. Note that this relation is preserved by $\phi.$ We want
  to show that this is an equivalence relation and obtain each
  $\Lambda_i$ as the closure of an equivalence class.

  Note that $\sim $ is trivially reflexive and symmetric. In order to check
  the transitivity suppose that $x,y,z\in {\rm Comp}(\phi)$ and
  $p\in \cF^u(x)\cap \cF^s(y), $ $q\in \cF^u(y)\cap \cF^s(z)$ are
  transverse intersection points. There exists $\mathfrak a \in \mathcal{A}_0$ such that
  $ax=x$, where $a=\exp(\mathfrak a)$. Since the iterations under $\phi^a$ of a ball around $p$ in
  $\cF^u(p)=\cF^u(x)=\phi^a(\cF^u(x))$ accumulate on $\cF^u(y)$, we
 obtain that $\cF^u(x)$ and $\cF^s(z)$ have a transverse
 intersection. Analogously, we obtain that $\cF^s(x)$ and $\cF^u(z)$ have a transverse
  intersection.

 By Theorem \ref{thm:local product} any two sufficiently near points
 are equivalent, so by compactness we have finitely many equivalence
 classes whose (pairwise disjoint) closures we denote by
 $\Lambda_1,\Lambda_2,\dots, \Lambda_{\ell}$.

 It remains to show that every $\Lambda_i$ is $\cC$-transitive for every Anosov subcone $\cC \subset \cA_0.$
It means that for any two open sets
 $U$ and $V$ in $\Lambda_i$ there exists $\mathfrak a \in \cC$ such
 that $\phi^{\exp(\mathfrak a)}(U)\cap V\neq \emptyset$ - the existence of a dense $\cC$-orbit then follows by a classical argument
 (see for example \cite[Lemma 1]{brintop}).

 The density of compact orbits
 in $\Lambda_i$ implies the existence of $x \in U$ and $\mathfrak a \in \cC$
 such that $\phi_{\mathfrak a}(x)=x$ (cf. Lemma~\ref{le:compactcausal}).

Let $y$ be an element in $V \cap \Lambda_i \cap $ Comp$(\phi).$ We have $x \sim y,$ hence the stable leaf $\cF^{ss}(y)$ intersects
$\cF^u(x)$. Iterating by $\phi_{\mathfrak b}$ where $\mathfrak b$ is an Anosov element in $\cA_0$ fixing $y$, we observe as above that $\cF^{u}(x)$ contains points arbitrarly
close to $y$; in particular, a point $p$ in $V$.

Now, according to Remark 3 item $(2)$, there is an element $\mathfrak g$ such that $\exp(\mathfrak g)p$ belongs to $\cF^{uu}(x)$. Then, the negative iterates $p_n:=a^{-n}\exp(\mathfrak g)p$ converge to $x$.

Let $n$ be big enough so that:

-- $n\mathfrak a-\mathfrak g$ belongs to $\cC,$

-- $p_n:=a^{-n}\exp(\mathfrak g)p$ belongs to $U.$

By the Lie product formula, $\exp(-\mathfrak g/k + n{\mathfrak a}/k)^{k}$ for $k$ going to $+\infty$ converges to $\exp(-\mathfrak g)a^n$, hence
maps the point $p_n$ of $U$ to an element in $V.$ Since every $-\mathfrak g/k + n{\mathfrak a}/k$ is an element of $\cC$, there is as required
an element of $\cC$ mapping an element of $U$ to an element of $V.$

In order to conclude, we must show that this decomposition does not depend on our initial choice of preferred splitting $\xi$ and the chamber $\cA_0.$ Let $\xi'$ be another splitting
for which $\cA_{\xi'}$ is non-empty. We obtain then a spectral decomposition $ \Omega = \bigcup_{i=1}^{\ell} \Lambda'_i$ such that for every Anosov subcone $\cC' \subset\cA_{\xi'}$
every $\Lambda'_i$ is $\cC'$-transitive. Let $x$ and $y$ be two elements of the same basic set $\Lambda_i$, and let $\Lambda'_j$, $\Lambda'_k$ be the basic sets for  the splitting $\xi'$
containing respectively $x$ and $y$. Then, there are sequences $(x_n)_{n \in \mathbb N}$ and $(y_n)_{n \in \NN}$ in $\Omega$, the first converging to $x$, the second to $y$, and
such that $y_n$ lies in the $\cC$-orbit of $x_n.$ Then, since $\Lambda_i$ is isolated in $\Omega$, every $x_n$ and $y_n$ lies in $\Lambda_i$ for $n$ sufficiently big.
But for the same reason, $x_n$ lies in $\Lambda'_j$ and $y_n$ lies in $\Lambda'_k$ for $n$ sufficiently big. Since $\Lambda'_j$ is $G$-invariant, we have $\Lambda'_j = \Lambda'_k$.
Therefore, every $\xi$-basic set $\Lambda_i$ is contained in one $\xi'$-basic set $\Lambda'_j.$ The reverse inclusion is obtained by the same argument, exchanging the roles of
$\xi$ and $\xi'.$ The Theorem follows.
 \end{proof}

\begin{definition}
{\rm
For every basic set $\Lambda_i$ we denote by $\cF^s(\Lambda_i)$ the union of the leaves $\cF^s(x)$ where $x$ describes $\Lambda_i.$
}
\end{definition}

\begin{lemma}
\label{le:saturelambda}
For every basic set $\Lambda_i$ and any Anosov element $\mathfrak a$ of $\cA_0$, an element $x$ of $M$ lies in $\cF^s(\Lambda_i)$ if
and only if the distance between $\exp(t\mathfrak a)x$ and $\Lambda_i$ goes to zero when $t \to +\infty.$
\end{lemma}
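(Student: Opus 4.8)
The plan is to first trade the weak object $\cF^s(\Lambda_i)$ for a strong one. By Theorem \ref{thm:inv_foliation} the foliation $\cF^{ss}$ is $G$-invariant, so $g\,\cF^{ss}(z)=\cF^{ss}(gz)$ for every $g\in G$; and by Remark \ref{rk.zero}(2) each weak stable leaf is the $\phi$-saturation of any strong stable leaf it contains, so $\cF^s(y)=\bigcup_{g\in G}\cF^{ss}(gy)$. Since $\Lambda_i$ is closed and $G$-invariant this yields the clean description
\begin{equation*}
\cF^s(\Lambda_i)=\bigcup_{y\in\Lambda_i}\cF^{ss}(y).
\end{equation*}
The direct implication is then immediate: if $x\in\cF^{ss}(y)$ with $y\in\Lambda_i$, then, since $\mathfrak a\in\cA_0=\cA_\xi$ uniformly contracts $E^{ss}$, the stable manifold theorem of \cite{hirpush} gives $d(\exp(t\mathfrak a)x,\exp(t\mathfrak a)y)\to 0$; as $\exp(t\mathfrak a)y\in\Lambda_i$ for all $t$, we get $\dist(\exp(t\mathfrak a)x,\Lambda_i)\to 0$.

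For the converse I would first replace the hypothesis by a structural one. As $M$ is compact, the $\omega$-limit set of $x$ under the flow $t\mapsto\exp(t\mathfrak a)x$ is a nonempty, compact, connected, invariant set contained in the nonwandering set $\Omega(\mathfrak a)=\Omega$ (Remark \ref{rk:omegaegal}); being connected and contained in the disjoint clopen union $\bigcup_j\Lambda_j$ of Theorem \ref{thm:espdecomp}, it lies in a single $\Lambda_j$, and the hypothesis $\dist(\exp(t\mathfrak a)x,\Lambda_i)\to 0$ forces $j=i$, i.e. $\omega(x)\subseteq\Lambda_i$. Next I would establish that $\Lambda_i$ is locally maximal: for two points $p,q\in\Lambda_i$ close enough the local product (bracket) points furnished by Theorem \ref{thm:local product} lie in $\cF^u(p)\cap\cF^s(q)$ and $\cF^s(p)\cap\cF^u(q)$, hence in $\Omega$ by Proposition \ref{pro:lambdastable}, and therefore in $\Lambda_i$ because $\Lambda_i$ is isolated in $\Omega$. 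Thus $\Lambda_i$ is a compact invariant set with local product structure, and so (see \cite[Ch.~8]{shubook}) admits an isolating neighbourhood $U_i$ such that any point whose entire flow-orbit remains in $U_i$ belongs to $\Lambda_i$.

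Granting local maximality, the core step is the classical ``local stable set equals forward-trapped set'' statement: there are $\varepsilon>0$ and a neighbourhood $U_i\supseteq\Lambda_i$ such that every $z$ with $\exp(t\mathfrak a)z\in U_i$ for all $t\ge 0$ satisfies $z\in\cF^{ss}(y)$ for some $y\in\Lambda_i$. Choosing $T$ so large that $\exp(t\mathfrak a)x\in U_i$ for all $t\ge T$ and applying this to $z=\exp(T\mathfrak a)x$, I obtain $y\in\Lambda_i$ with $\exp(T\mathfrak a)x\in\cF^{ss}(y)$; invariance of $\cF^{ss}$ and of $\Lambda_i$ then gives $x\in\cF^{ss}(\exp(-T\mathfrak a)y)\subseteq\cF^s(\Lambda_i)$. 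For the core step itself I would run the standard argument for locally maximal hyperbolic sets: using the local product structure near $\Lambda_i$ one projects $\exp(t\mathfrak a)z$ along the strong unstable leaves onto the local strong stable manifolds of $\Lambda_i$, and shows via the uniform expansion in $E^{uu}$ that a nonzero strong unstable displacement cannot remain trapped in $U_i$ under forward iteration, so the displacement must vanish and $z$ must sit on a strong stable leaf of $\Lambda_i$.

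The main obstacle, and the only genuine departure from the diffeomorphism case, is precisely this core step, because here the central direction is not trivial but the full $k$-dimensional orbit bundle $T\cO$: the projection onto local stable manifolds and the expansion estimate must be carried out transversally to $\cO$, matching forward orbit segments of $z$ with orbits in $\Lambda_i$ up to a small, controlled element of $G$, as in the flow-box description of Remark \ref{rk:periodiclocal}. I expect the delicate point to be exactly this bookkeeping — keeping the neutral $G$-displacement bounded while iterating — whereas everything else is a direct transcription of classical hyperbolic theory already available through Theorems \ref{thm:local product} and \ref{thm:inv_foliation}.
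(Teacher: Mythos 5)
Your forward implication coincides with the paper's: writing $\cF^s(\Lambda_i)=\bigcup_{y\in\Lambda_i}\cF^{ss}(y)$ (using $\phi$-invariance of $\Lambda_i$ and Remark \ref{rk.zero}) and applying the contraction along $\cF^{ss}$ is exactly what the paper does. The converse, however, is where your proposal has a genuine gap. You route it through $\omega$-limit sets, local maximality of $\Lambda_i$, and the classical ``forward-trapped set equals local stable set'' theorem, citing \cite[Ch.~8]{shubook} -- but that reference, and the shadowing-type arguments behind it, are for diffeomorphisms. In the present setting there is a $k$-dimensional neutral direction $T\cO$, and the step you call the ``core step'' -- matching the forward orbit of $z$ with an orbit in $\Lambda_i$ while keeping the neutral $G$-displacement under control -- is precisely the entire content of the lemma. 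You explicitly defer it rather than prove it, so the proposal as written is incomplete exactly where the difficulty sits. (Note that the paper deliberately avoids pseudo-orbit machinery even in Proposition \ref{pro:lambdastable}, for this very reason.)

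The paper's converse shows that all of this scaffolding is unnecessary, and it also dissolves the bookkeeping problem you flag. Suppose $x\notin\cF^s(\Lambda_i)$ while $\dist(\exp(t\mathfrak a)x,\Lambda_i)\to 0$. For $t$ large, $x_t:=\exp(t\mathfrak a)x$ lies in a product neighborhood of $\Lambda_i$, so by Theorem \ref{thm:local product} the distance $r_t$, measured \emph{inside the leaf} $\cF^{uu}(x_t)$, from $x_t$ to the set $\cF^s(\Lambda_i)$ satisfies $r_t\le\delta$, while $r_t\neq 0$ because $x_t\notin\cF^s(\Lambda_i)$. The key observation is that $\cF^s(\Lambda_i)$ is $\exp(\mathfrak a)$-invariant and is saturated by the whole weak stable foliation, orbit directions included; hence there is no orbit to match and no neutral displacement to track: the expansion on $E^{uu}$ forces $r_t$ to grow exponentially, contradicting the uniform bound $\delta$. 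Measuring the unstable displacement to the invariant saturated set $\cF^s(\Lambda_i)$, rather than to the strong stable manifold of a shadow point of $\Lambda_i$, is the one idea your proposal is missing; with it, neither local maximality nor the $\omega$-limit reduction is needed (the latter is anyway implicit in Corollary \ref{cor:Mlambda}, whose proof uses this very lemma).
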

\begin{proof}
If $x$ lies in $\cF^s(\Lambda_i)$, since $\Lambda_i$ is $\phi$-invariant,  $x$ lies in a strong stable leaf $\cF^{ss}(p)$ of some element in
$\Lambda_i.$ Then the distance $d(\exp(t\mathfrak a)x, \exp(t\mathfrak a)p)$ goes to $0$ when $t \to +\infty.$ It proves one implication of the statement.

Inversely, assume that $d(\Lambda_i, \exp(t\mathfrak a)x)$ goes to zero. Assume by a way of contradiction that $x$ does not belong to $\cF^s(\Lambda_i)$.
Then, for $t$ sufficiently big, $x_t:=\exp(t\mathfrak a)x$ lies in a neighborhood of $\Lambda_i$ where the local product
property holds: there is a point $p_t$ in $\Lambda_i$ such that $\cF^{uu}_\delta(x_t)$ intersects $\cF^{s}_\delta(p_t).$ Since $x \notin \cF^s(\Lambda_i)$, we have
$x_t \notin \cF^{s}(p_t).$ Let $r_t$ be the distance in $\cF^{uu}(x_t)$ between $x_t$ and $\cF^s(\Lambda_i)$. At one hand we have the uniform bound $r_t \leq \delta$ for any $t$ sufficiently big.
On the other hand, since $\cF^s(\Lambda_i)$ is $\exp(\mathfrak a)$-invariant, this distance increases exponentially with $t$. Contradiction.
\end{proof}

\begin{corollary}
\label{cor:Mlambda}
Every point in $M$ lies in the stable leaf (respectively unstable leaf) of a non-wandering point. In other words:
$$M = \bigcup_{i =1}^\ell \cF^s(\Lambda_i) =  \bigcup_{i =1}^\ell \cF^u(\Lambda_i)$$
\end{corollary}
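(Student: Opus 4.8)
The plan is to derive the corollary directly from Lemma \ref{le:saturelambda}, using the fact that every forward orbit under an Anosov element must accumulate on the nonwandering set. Fix an Anosov element $\mathfrak a$ in the chamber $\cA_0$ and set $a = \exp(\mathfrak a)$. Given an arbitrary point $x$ of $M$, I would first show that the distance $d(\phi^{a^t}(x), \Omega)$ tends to zero as $t \to +\infty$. Once this is established, I would apply Lemma \ref{le:saturelambda} to conclude that $x$ lies in $\cF^s(\Lambda_i)$ for some $i$; running the same argument for negative times (equivalently, replacing $\mathfrak a$ by $-\mathfrak a$, whose chamber switches the roles of stable and unstable) yields the dual statement $M = \bigcup_i \cF^u(\Lambda_i)$.

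The main point to verify is therefore that the forward trajectory $\exp(t\mathfrak a)x$ approaches $\Omega$. I would argue by contradiction: if it did not, there would be a sequence $t_n \to +\infty$ and an $\varepsilon > 0$ such that $d(\exp(t_n\mathfrak a)x, \Omega) \geq \varepsilon$ for all $n$. Passing to the accumulation points of $\exp(t_n\mathfrak a)x$ in the compact manifold $M$, one obtains a point $z$ at distance at least $\varepsilon$ from $\Omega$, and by construction such an accumulation point is nonwandering for the flow $\phi_{\mathfrak a}$. But $\Omega(\mathfrak a)$ coincides with $\Omega$ by Remark \ref{rk:omegaegal}, so $z \in \Omega$, contradicting $d(z, \Omega) \geq \varepsilon$. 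This shows the forward orbit converges to $\Omega$, and since $\Omega = \bigcup_i \Lambda_i$ is a finite disjoint union of closed sets, for $t$ large the orbit lies near a single block $\Lambda_i$; thus $d(\exp(t\mathfrak a)x, \Lambda_i) \to 0$.

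The hard part is pinning down the correct notion of recurrence for the flow $\phi_{\mathfrak a}$ and ensuring the accumulation argument genuinely produces a nonwandering point. The cleanest route is to invoke the standard fact that for a flow on a compact manifold the $\omega$-limit set of any point is nonempty and contained in the nonwandering set of that flow; combined with the identification $\Omega(\mathfrak a) = \Omega$ from Remark \ref{rk:omegaegal}, this immediately gives that $\omega(x)$ meets $\Omega$. To upgrade ``the $\omega$-limit set meets $\Omega$'' to ``the whole forward orbit converges to a single block'' I would use that the blocks are isolated and pairwise-separated, together with the hyperbolic behaviour: once the trajectory enters a small product neighborhood of $\Lambda_i$ it cannot escape in forward time without its unstable component growing, which would drive it away from $\Omega$ entirely and contradict $\omega(x) \subset \Omega$. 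Once convergence to $\Lambda_i$ is secured, Lemma \ref{le:saturelambda} closes the argument with no further work, and the decomposition $M = \bigcup_i \cF^s(\Lambda_i)$ follows; the unstable decomposition is obtained verbatim by time-reversal.
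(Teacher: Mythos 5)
Your skeleton is the same as the paper's: show that the forward $\phi_{\mathfrak a}$-orbit of $x$ converges to a single basic set $\Lambda_i$, apply Lemma \ref{le:saturelambda}, and reverse the flow for the unstable statement. The first half is fine: $\omega$-limit points of a flow on a compact manifold are nonwandering for that flow, and Remark \ref{rk:omegaegal} identifies $\Omega(\mathfrak a)$ with $\Omega$, so $d(\exp(t\mathfrak a)x,\Omega)\to 0$. The problem is the step you yourself flag as ``the hard part'': upgrading $\omega(x)\subset\Omega$ to convergence to a \emph{single} block. Your proposed mechanism --- once the trajectory enters a product neighborhood of $\Lambda_i$ it cannot escape ``without its unstable component growing, which would drive it away from $\Omega$ entirely'' --- is precisely the assertion that needs proof, and you give no argument for it. Nothing you have said excludes the scenario in which the orbit enters a neighborhood of $\Lambda_i$, escapes along the unstable direction, and then approaches a different block $\Lambda_j$, so that $\omega(x)$ meets several blocks; transitions of exactly this kind along unstable leaves are a real phenomenon in this setting (the closure of $\cF^u(\Lambda_i)$ may contain other basic sets --- this is what the graph $\mathfrak G$ of Definition \ref{def:graph} records). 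To make your escape argument rigorous one would need to show (i) that an orbit leaving a product neighborhood of $\Lambda_i$ exits close to the compact piece of $\cF^u(\Lambda_i)$ lying at a definite distance from $\Lambda_i$, and (ii) that this compact piece is uniformly separated from $\Omega$, which in turn requires proving $\cF^u(\Lambda_i)\cap\Omega=\Lambda_i$ (backward orbits of points of $\cF^u(\Lambda_i)$ converge to $\Lambda_i$, and the blocks are disjoint, closed and invariant). None of this appears in your proposal.

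The gap is fixable, but the paper closes it with one standard fact that you never invoke: the $\omega$-limit set of a flow orbit on a compact manifold is \emph{connected}. Since $\Omega=\bigcup_{i}\Lambda_i$ is a finite union of pairwise disjoint closed sets, one chooses pairwise disjoint open neighborhoods $V_i$ of the $\Lambda_i$'s; the connected set $\omega(x)$, being covered by the disjoint $V_i$'s, lies in a single $V_i$, hence in $V_i\cap\Omega(\mathfrak a)=\Lambda_i$. This gives $d(\exp(t\mathfrak a)x,\Lambda_i)\to 0$ at once, and Lemma \ref{le:saturelambda} finishes the proof. You should either cite connectedness (the clean route, and the paper's) or supply the missing unstable-set analysis; as written, the pivotal claim of your argument is unsupported.
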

\begin{proof}
Let $\mathfrak a$ be an element of $\cA_0.$ Let $V_1$, $V_2$, ... , $V_\ell$ be two by two disjoint open neighborhoods of the $\Lambda_i$'s.
The $\omega$-limit set of the $\phi_{\mathfrak a}$-orbit of $x$ is contained in $\Omega(\mathfrak a)$ and thus is covered by  the disjoint union
of the $V_i'$. Moreover, it is connected, hence it is contained in one and only one open domain $V_i$, hence in $V_i \cap \Omega(\mathfrak a) = \Lambda_i$.
Therefore, the distance between $\exp(t\mathfrak a)$ and $\Lambda_i$ converges to $0$. Then, it follows from Lemma \ref{le:saturelambda} that $x$
lies in $\cF^s(\Lambda_i)$.

The similar statement for the unstable foliation is proved by reversing the flow.
\end{proof}

\begin{remark}
The decomposition $ \Omega = \bigcup_{i=1}^{\ell} \Lambda_i$ is independant from the choice of the splitting $\xi,$ but of course it is not the case for the decomposition
$M = \bigcup_{i =1}^\ell \cF^s(\Lambda_i) =  \bigcup_{i =1}^\ell \cF^u(\Lambda_i).$
\end{remark}

We will need later the follwing criteria:

\begin{proposition}
 \label{pro:transitivity}
 If every leaf of $\cF^{s}$ is dense in $M$, then $\Omega(\phi) = M.$
 \end{proposition}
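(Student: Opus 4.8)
The plan is to show that the hypothesis forces the spectral decomposition to consist of a single basic set, which must then be all of $M$. First I would invoke Corollary \ref{cor:Mlambda}, which gives the decomposition $M = \bigcup_{i=1}^\ell \cF^s(\Lambda_i)$. The key observation is that each $\cF^s(\Lambda_i)$ is a union of weak stable leaves $\cF^s(x)$ with $x \in \Lambda_i$: indeed, if $x \in \Lambda_i$ then the whole leaf $\cF^s(x)$ lies in $\cF^s(\Lambda_i)$, and conversely any point of $\cF^s(\Lambda_i)$ lies on such a leaf. Now suppose some leaf $\cF^s(p)$ is dense in $M$. The point $p$ belongs to one of the stable saturations, say $p \in \cF^s(\Lambda_{i_0})$, which means $\cF^s(p) = \cF^s(x_0)$ for some $x_0 \in \Lambda_{i_0}$.

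The heart of the argument is then that density of this single leaf forces $\cF^s(\Lambda_{i_0})$ to be dense, and hence by a closedness/openness dichotomy to be everything. Since $\cF^s(p) \subset \cF^s(\Lambda_{i_0})$ and $\cF^s(p)$ is dense, $\cF^s(\Lambda_{i_0})$ is dense in $M$. I would like to conclude that $\cF^s(\Lambda_{i_0})$ is also closed, so that it equals $M$; this would give $\Lambda_{i_0} = \Omega$, forcing $\ell = 1$ and $\Omega = M$. The natural way to see closedness is via Lemma \ref{le:saturelambda}: a point $x$ lies in $\cF^s(\Lambda_{i_0})$ if and only if $\dist(\exp(t\mathfrak a)x, \Lambda_{i_0}) \to 0$ as $t \to +\infty$, for any fixed Anosov element $\mathfrak a \in \cA_0$. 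Combined with Corollary \ref{cor:Mlambda}, which says every point is attracted to \emph{some} $\Lambda_i$, and the fact that the $\Lambda_i$ are pairwise disjoint with disjoint neighborhoods, each $\cF^s(\Lambda_i)$ is exactly the set of points attracted to $\Lambda_i$; these sets are disjoint and cover $M$. A dense set among a finite disjoint cover, each piece of which is a union of leaves of a continuous foliation, should force the dense piece to be open (by the local product structure, Theorem \ref{thm:local product}) and its complement to be a union of the other closed pieces.

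Concretely, I would argue as follows. By the local product structure near any point $q \in \Lambda_{i_0}$, the set $\cF^s(\Lambda_{i_0})$ contains a full product neighborhood's worth of stable plaques, so a point near $\Lambda_{i_0}$ attracted to $\Lambda_{i_0}$ has an entire transverse neighborhood also attracted to $\Lambda_{i_0}$; this makes $\cF^s(\Lambda_{i_0})$ open in the decomposition. Since the decomposition $M = \bigsqcup_i \cF^s(\Lambda_i)$ partitions $M$ into finitely many pieces, each of which is open by the same reasoning, each piece is also closed (being the complement of the union of the others). As $M$ is connected and $\cF^s(\Lambda_{i_0})$ is nonempty, open, and closed, we get $\cF^s(\Lambda_{i_0}) = M$, whence $\ell = 1$ and $\Lambda_{i_0} = \Omega$. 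Finally, since $\Lambda_{i_0} \subset \Omega$ is closed and $\cF^s(\Lambda_{i_0}) = M$, and every point is attracted under $\exp(t\mathfrak a)$ to $\Lambda_{i_0}$, the set $\Omega$ must also contain the whole attracted-backward picture; reversing the flow and using $M = \bigcup \cF^u(\Lambda_i)$ symmetrically pins down $\Omega = M$.

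The main obstacle I anticipate is establishing rigorously that each $\cF^s(\Lambda_i)$ is \emph{open}, since a priori $\cF^s(\Lambda_i)$ is only a union of leaves and need not be open for a general Anosov system (stable manifolds of a basic set are typically only immersed). The clean resolution is that the $\Lambda_i$ are the \emph{attractor-repeller} pieces of the finite spectral decomposition: because they are pairwise disjoint with disjoint neighborhoods and every forward orbit converges to exactly one of them (Corollary \ref{cor:Mlambda}), the basins $\cF^s(\Lambda_i)$ are genuinely open as basins of attraction for the dynamics of $\exp(t\mathfrak a)$, and this is where I would spend the most care. Once openness of the basins is secured, the connectedness argument closes the proof immediately.
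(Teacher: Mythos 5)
Your argument breaks precisely at the step you yourself flagged as the main obstacle: the claim that each basin $\cF^s(\Lambda_i)$ is open. This is not a general fact about the spectral decomposition, and your proposed resolution --- finitely many pairwise disjoint basins covering $M$, every forward orbit converging to exactly one $\Lambda_i$, hence each basin open --- is a non sequitur. A non-transitive Anosov flow (e.g.\ the Franks--Williams example, which is an action of the nilpotent group $\RR$ and so lies in the paper's setting) has exactly two basic sets, an attractor $A$ and a repeller $R$; every orbit is forward-asymptotic to exactly one of them, the two basins are disjoint and cover $M$, yet $\cF^s(R)=R$ is closed and nowhere dense, not open (only the attractor's basin is open). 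Your local product structure argument fails for the same reason: a point $w$ near $q\in\Lambda_{i_0}$ lies on the stable plaque of some $z\in\cF^{uu}_\delta(q)$, and nothing forces such a $z$ to be attracted to $\Lambda_{i_0}$; when $\Lambda_{i_0}$ is a repeller, those $z$ escape to other basic sets. Note that your openness argument nowhere uses the density hypothesis, so if it were correct it would prove that \emph{every} such Anosov action has a single basic set, which is false; openness of all basins is essentially equivalent to the conclusion you are after, so the argument is circular in spirit.

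There is a second gap at the end: even granting $\ell=1$ and $M=\cF^s(\Lambda_1)=\cF^u(\Lambda_1)$, knowing that $p\in\cF^u(x)\cap\cF^s(y)$ with $x,y\in\Omega$ does not by itself put $p$ in $\Omega$ --- ``the whole attracted-backward picture'' is not an argument. The tool that closes this is Proposition \ref{pro:lambdastable}, which requires \emph{both} intersections $\cF^u(x)\cap\cF^s(y)$ and $\cF^s(x)\cap\cF^u(y)$ to be non-empty, and you never invoke it. In fact the paper's proof consists of exactly this and nothing else, with no detour through basins: by Corollary \ref{cor:Mlambda}, write $p\in\cF^u(x)\cap\cF^s(y)$ with $x,y\in\Omega$; since $\cF^s(x)$ is dense by hypothesis, the local product structure (Theorem \ref{thm:local product}) forces it to meet $\cF^u(y)$; then Proposition \ref{pro:lambdastable} gives $p\in\Omega$. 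The hypothesis is used exactly where it is indispensable --- to produce the second, homoclinic-type intersection --- and the question of whether the sets $\cF^s(\Lambda_i)$ are open never arises.
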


 \begin{proof}
By Theorem \ref{thm:espdecomp}, for any $p$ in $M$ there is $x, y \in \Omega(\phi)$ such that $p \in \cF^u(x) \cap \cF^{s}(y).$
The hypothesis implies that the stable leaf $\cF^s(x)$ intersects $\cF^u(y).$ Hence by Proposition \ref{pro:lambdastable} we have $p \in \Omega(\phi).$
 \end{proof}

\begin{definition}
\label{def:graph}
Let $\mathfrak G$ be the graph whose vertices are the basic sets $\Lambda_i,$ and such that there is an edge connecting $\Lambda_i$ and $\Lambda_j$ if and only if
the closure of  $\cF^u(\Lambda_i)$  contains $\Lambda_j$ (hence $\cF^{u}(\Lambda_j)$), or the closure of  $\cF^u(\Lambda_j)$ contains $\Lambda_i$ (hence $\cF^u(\Lambda_i)$).
\end{definition}

\begin{lemma}
\label{le:Gconnected}
$\mathfrak G$ is connected.
\end{lemma}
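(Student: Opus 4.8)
The plan is to show that the graph $\mathfrak{G}$ is connected by proving that the set of vertices cannot be partitioned into two nonempty subsets with no edges between them. Suppose for contradiction that $\mathfrak{G}$ is disconnected, so we can write the vertex set as a disjoint union $\{\Lambda_i\}_{i \in A} \sqcup \{\Lambda_j\}_{j \in B}$ with $A, B$ nonempty and no edge joining a vertex in $A$ to a vertex in $B$. The key geometric objects are the saturated sets $\cF^s(\Lambda_i)$ and $\cF^u(\Lambda_i)$, which by Corollary \ref{cor:Mlambda} cover all of $M$. I would set $M_A = \bigcup_{i \in A} \cF^u(\Lambda_i)$ and $M_B = \bigcup_{j \in B} \cF^u(\Lambda_j)$, so that $M = M_A \cup M_B$, and aim to show that under the disconnection hypothesis these two sets are disjoint and each closed, contradicting the connectedness of $M$.

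First I would establish that the closures behave well: the absence of an edge between $\Lambda_i$ (with $i \in A$) and $\Lambda_j$ (with $j \in B$) means, by Definition \ref{def:graph}, that $\Lambda_j \notin \overline{\cF^u(\Lambda_i)}$ and $\Lambda_i \notin \overline{\cF^u(\Lambda_j)}$. The central point is to understand the accumulation of $\cF^u(\Lambda_i)$. Using Lemma \ref{le:saturelambda} applied with reversed time (i.e.\ the unstable analogue, where $x \in \cF^u(\Lambda_i)$ iff $\dist(\exp(t\mathfrak{a})x, \Lambda_i) \to 0$ as $t \to -\infty$), together with the spectral decomposition, I expect that the closure $\overline{\cF^u(\Lambda_i)}$ is obtained by adjoining precisely those basic sets $\Lambda_k$ whose unstable sets limit onto, i.e. those connected to $\Lambda_i$ by an edge, and the full unstable sets they carry. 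Concretely, if a point $z$ lies in $\overline{\cF^u(\Lambda_i)}$, its backward $\phi_{\mathfrak a}$-orbit has $\omega$-limit (for negative time) inside some $\Lambda_k$; by Corollary \ref{cor:Mlambda} $z$ lies in some $\cF^u(\Lambda_k)$, and one checks that $\Lambda_k$ is then joined to $\Lambda_i$ by an edge.

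The main engine is therefore a closure computation:
\begin{equation*}
\overline{\cF^u(\Lambda_i)} = \bigcup_{k:\ \Lambda_k \subset \overline{\cF^u(\Lambda_i)}} \cF^u(\Lambda_k),
\end{equation*}
and I would prove that each $\Lambda_k$ appearing on the right is joined to $\Lambda_i$ by an edge of $\mathfrak{G}$. Granting this, $M_A = \bigcup_{i \in A}\overline{\cF^u(\Lambda_i)}$ is already closed, being a finite union of closed sets, and the no-edge hypothesis forces $M_A$ to contain no $\Lambda_j$ with $j \in B$; symmetrically $M_B$ is closed and contains no $\Lambda_i$ with $i \in A$. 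It remains to see $M_A \cap M_B = \emptyset$: a common point would lie in $\cF^u(\Lambda_i)$ and in $\overline{\cF^u(\Lambda_j)}$ (or vice versa), again forcing via the closure computation an edge across the partition. Since $M = M_A \sqcup M_B$ with both closed and nonempty, this contradicts the connectedness of $M$.

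The hard part will be the closure computation, i.e.\ verifying that every basic set in $\overline{\cF^u(\Lambda_i)}$ is actually edge-connected to $\Lambda_i$, and in particular that no point in $\cF^u(\Lambda_i)$ can limit onto a basic set that lies ``across'' the partition without creating an edge. This requires combining Lemma \ref{le:saturelambda}, the local product structure of Theorem \ref{thm:local product}, and the transitivity furnished by Theorem \ref{thm:espdecomp} to control how unstable leaves accumulate; the delicate issue is ruling out the possibility that $\overline{\cF^u(\Lambda_i)}$ picks up only a \emph{strong} unstable piece of some $\Lambda_k$ rather than the full $\cF^u(\Lambda_k)$, which is precisely why the edge relation in Definition \ref{def:graph} is phrased in terms of containment of entire weak unstable sets.
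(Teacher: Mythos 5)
Your proposal is correct and follows essentially the same route as the paper: the paper defines, for each connected component $C$ of $\mathfrak G$, the set $F(C)=\bigcup_{\Lambda_i \in C}\cF^u(\Lambda_i)$, shows each $F(C)$ is closed and that distinct components yield disjoint such sets, and concludes from Corollary \ref{cor:Mlambda} together with the connectedness of $M$ --- exactly your $M_A$, $M_B$ argument phrased without the contradiction framing. The ``closure computation'' you single out as the hard part is precisely the step the paper asserts rather than proves in detail (it is absorbed into the parenthetical ``hence'' of Definition \ref{def:graph} and the one-line claim that $x\in\overline{\cF^u(\Lambda_i)}\cap\cF^u(\Lambda_j)$ forces $\cF^u(\Lambda_j)\subset\overline{\cF^u(\Lambda_i)}$), so your sketch is at the same level of completeness as the published proof.
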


\begin{proof}
For every connected component $C$ of $\mathfrak C,$ let $F(C)$ be the union of $\cF^u(\Lambda_i)$ where $\Lambda_i$ describes the set of basic sets in $C.$
Let $x$ be an element in the closure of $F(C):$ since the number of basic sets is finite, $x$ is in the closure of $\cF^u(\Lambda_i)$
for some $\Lambda_i$ in $C.$ Let $\Lambda_j$ be the  basic set such that $x \in \cF^u(\Lambda_j)$ (cf. Corollary \ref{cor:Mlambda}).
Then $\cF^u(\Lambda_j)$ is contained in the closure of  $\cF^u(\Lambda_i)$, hence $\Lambda_i$ is in $C$.

It follows that $F(C)$ is closed.

Let now $C'$ be any other connected component of $\mathfrak G.$ Assume that $F(C) \cap F(C')$ is non-empty: let $x$ be an element in this intersection. Then $x$ lies in some $\cF^{u}(\Lambda_j),$ and one proves as above that $\Lambda_j$ lies in $C$ and in $C'.$ Hence $C=C'.$

In other words, $M$ is the disjoint union of the closed subsets $F(C).$ The Lemma follows from this fact and the connectedness of $M.$
\end{proof}

%\subsection{Pseudo orbits and plaque expansiveness}

% Let $a = \exp(\mathfrak a)$ be an Anosov element in the preferred chamber $\mathcal A_0$.

%\begin{definition}
%{\rm
%Let $\epsilon$ be a positive real number. An \textit{$\epsilon$-pseudo orbit} of $a$ is a bi-infinite sequence $\{ x_i \}_{i \in \ZZ} $ such that:
%$$\forall i \in \ZZ \;\;\; d(a.x_i, x_{i+1}) \leq \epsilon$$
%Let $W$ be a neighborhood of the identity in $N.$ The pseudo-orbit\textit{ $W$-respects} $\cO$ if furthermore:
%$$\forall i \in \ZZ \;\;\; x_{i+1} = b(a.x_i)$$
%}
%\end{definition}

%\begin{theorem}[Theorem (7.2) in \cite{hirpush}]
%The couple $(a, \mathcal O)$ is \textbf{plaque expansive,} i.e. for every neighborhood $W$ of the identity in $N$ there is $\epsilon>0$ and a small neighborhood such that
%for any pair $\{ x_i \}_{i \in \ZZ} $ and $\{ y_i \}_{i \in \ZZ} $ of $\epsilon$-pseudo orbits of $a$ which $W$-respect $\cO$ and such that
%$d(x_i, y_i) \leq \epsilon$ for every $i$, then there is a sequence $\{ g_i \}_{i \in \ZZ} $ of elements in $W$ such that:
%$$\forall i \in \ZZ \;\;\; y_i = g_i.x_i$$
%\end{theorem}

%?????????????????????????????????

\subsection{Injectivity of the holonomy}

Recall that in Remark \ref{rk.zero} item $(3)$ we have defined, for every point $x$ and any element $\mathfrak h$ of $\mathcal N$ satisfying $(\exp\mathfrak h)\cF^{ss}(x) = \cF^{ss}(x)$ a
germ at $x$ of homeomorphism $h_x^{\mathfrak h}$ of $\cF^{uu}(x)$,
representing the holonomy transverse to the foliation $\cF^s$ of a certain loop $\omega_x^{\mathfrak h}$ obtained by composing the orbit of
$x$ under $\phi_{\mathfrak h}$ during the time $[0,1]$ and
any path from $\exp(\mathfrak h)x$ to $x$ in $\cF^{ss}(x).$ Moreover, every loop in the leaf $\cF^s(x)$ is homotopic to a loop
$\omega_x^{\mathfrak h}$, hence every element of the holonomy group of the leaf $\cF^s(x)$ is represented by some $h_x^{\mathfrak h}.$
We call $h_x^{\mathfrak h}$ the \textit{holonomy of ${\mathfrak h}$ at $x.$} Observe that if $\mathfrak h$ belongs to
$\cA_0$, then $h_x^{\mathfrak h}$ is expanding, in particular, non-trivial.

\begin{definition}
\label{def:irreducible}
{\rm
The Anosov action $\phi$ has \textit{injective stable holonomy} if for every leaf $F$ of $\cF^s$ the holonomy representation $\pi_1(F,x) \to$
Homeo$_{loc}(\cF^{uu}(x))$ is injective. According to the discussion above, it is equivalent to the following requirement:
for every non-trivial element $\mathfrak h$ of $\mathcal G$ and every $x \in M$ such that
$\exp(\mathfrak h)\cF^{ss}(x) = \cF^{ss}(x)$ the holonomy $h_x^{\mathfrak h}$ at $x$ is non-trivial.

Similarly, one define injective unstable holonomy; the action \textit{has injective holonomy} if it has stable and unstable injective holonomy.
}
\end{definition}

Of course, Nil-extensions do not have injective holonomy: loops contained in the fibers are loops contained in orbits with trivial holonomy.
The following proposition shows that it is the only obstruction to injective holonomy:

\begin{proposition}
 \label{le:trivial}
Faithful Anosov actions of nilpotent Lie groups have injective holonomy.
 \end{proposition}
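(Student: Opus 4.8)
The plan is to argue by contradiction, working with the equivalent formulation given in Definition \ref{def:irreducible}. Suppose the stable holonomy is not injective: there is a nonzero $\mathfrak h \in \mathcal G$ and a point $x$ with $\exp(\mathfrak h)\cF^{ss}(x) = \cF^{ss}(x)$ whose holonomy germ $h_x^{\mathfrak h}$ is trivial. Since $G$ is simply connected and nilpotent, $\exp$ is a diffeomorphism, so $\exp(\mathfrak h) \neq e$; the whole point is to deduce that $\exp(\mathfrak h)$ nevertheless lies in the kernel of $\phi$, which contradicts faithfulness. The injectivity of the unstable holonomy is then obtained by the identical argument after exchanging $\cF^s$ and $\cF^u$, i.e. replacing the preferred chamber $\cA_0$ by $-\cA_0$.

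First I would transport the computation onto a compact orbit. By Corollary \ref{cor:Mlambda}, $x$ lies in $\cF^{ss}(q)$ for some $q$ in a basic set $\Lambda_i$, so the weak leaf $F = \cF^s(x) = \cF^s(q)$ already meets $\Lambda_i$. The holonomy germ of a loop depends only on its free homotopy class in $F$ and on the transverse structure described in Remark \ref{rk.zero}, item (3); using the density of $\mathrm{Comp}(\phi)$ in $\Omega$ (Proposition \ref{prop:closCompactO} and Remark \ref{rk:omegaegal}), I would move the base point of $\omega_x^{\mathfrak h}$ to a point $p$ on a compact orbit inside $F$ and represent the transported class by an element $\delta$ of the isotropy lattice $\Delta_p$, i.e. an element genuinely fixing $p$. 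The decisive simplification is that once the base point sits on a compact orbit and $\delta$ fixes $p$, the holonomy germ $h_p^{\delta}$ is exactly the germ at $p$ of $\phi^{\delta}$ acting on the leaf $\cF^{uu}(p)$: indeed $\phi^{\delta}(p)=p$, and by the $G$-invariance of the foliations (Theorem \ref{thm:inv_foliation}) the diffeomorphism $\phi^{\delta}$ preserves $\cF^{uu}(p)$. Triviality of the holonomy thus becomes the statement $\phi^{\delta} = \mathrm{id}$ on a neighbourhood of $p$ in $\cF^{uu}(p)$.

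Next I would upgrade this local triviality to $\phi^{\delta} = \mathrm{id}_M$. Choose an Anosov element $a = \exp(\mathfrak a)$ of $\cA_0$ fixing $p$ (available by Lemma \ref{le:compactcausal} and Theorem \ref{thm:compact_orbit}); then $\phi^a$ expands the plane $\cF^{uu}(p)\cong\RR^{\ell}$, so any of its points can be contracted by a suitable $\phi^{-na}$ into the neighbourhood on which $\phi^{\delta} = \mathrm{id}$, forcing that point to be fixed by the conjugate $\phi^{a^n\delta a^{-n}}$. To convert these facts about the conjugates $a^n\delta a^{-n}$ back into triviality of $\phi^{\delta}$ itself, and then to propagate it from the single leaf $\cF^{uu}(p)$ to a neighbourhood in $M$ and finally, through the saturation $M = \bigcup_i \cF^u(\Lambda_i)$ of Corollary \ref{cor:Mlambda} and the local product structure of Theorem \ref{thm:local product}, to all of $M$, I would run the ascending normalizer induction of Lemma \ref{le:ascendingchain}, exactly as in the proofs of Theorems \ref{thm:inv_foliation} and \ref{thm:compact_orbit}: starting from the one-parameter subgroup through $\mathfrak a$ and climbing $\mathcal H_{i+1} = N_{\mathcal N}(\mathcal H_i)$ up to $\mathcal N$, showing at each stage that the ``kernel'' directions (those acting trivially transversally) are stable under taking normalizers. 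The outcome is $\phi^{\delta} = \mathrm{id}_M$, whence the non-trivial element $\delta$ lies in $\ker\phi$, the desired contradiction.

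The main obstacle is precisely the germ-to-global bootstrap of the third paragraph: knowing that $\phi^{\delta}$ is the identity on a neighbourhood of $p$ in one expanding leaf is far weaker than $\phi^{\delta} = \mathrm{id}$ on $M$, because $\phi^{\delta}$ is only smooth (the splitting is merely H\"older transversally), so no analytic continuation is available, and in a non-abelian nilpotent group the conjugates $a^n\delta a^{-n}$ genuinely differ from $\delta$. The argument must therefore use simultaneously the contraction/expansion imposed by the Anosov splitting, the fact that $\ker\phi$ is a discrete (hence central) normal subgroup of the connected group $G$, and the normalizer condition for nilpotent Lie algebras; keeping the base point on compact orbits throughout is what makes the holonomies into honest return maps at saddle fixed points (Remark \ref{rk:periodiclocal}) and renders the discreteness of the isotropy lattices usable. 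A secondary subtlety, arising when $\omega_x^{\mathfrak h}$ is transported to a compact orbit, is to guarantee that the transported free homotopy class is realized by an element of $\Delta_p$ fixing $p$; this is exactly where the density of compact orbits \emph{inside} each basic set must be invoked.
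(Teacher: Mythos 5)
Your overall strategy coincides with the paper's (contraposition: a trivial stable holonomy germ must force a non-trivial element acting trivially on $M$, contradicting faithfulness, with the unstable case obtained by reversing the chamber), and you correctly locate the hard point; but two of your steps have genuine gaps, and the second is precisely where the paper's key idea lives. The first gap is the transport to a compact orbit. Corollary \ref{cor:Mlambda} gives that $F=\cF^s(x)$ meets some basic set $\Lambda_i$, but it does not give a compact orbit \emph{inside} $F$: density of ${\rm Comp}(\phi)$ in $\Lambda_i$ is density in the ambient topology, not leafwise, and a weak stable leaf meeting a basic set generically contains no compact orbit at all (already for Anosov flows a weak stable leaf carries at most one periodic orbit). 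Since a holonomy base-point change can only move within the leaf $F$, your element $\delta\in\Delta_p$ cannot be produced this way. The paper avoids transporting the specific pair $(x,\mathfrak h)$ altogether: it shows that the \emph{set} $\mathcal P$ of points admitting trivial holonomy for \emph{some} non-trivial element is saturated by $\cF^{ss}$, by the $N$-action, and by $\cF^{uu}$ (Lemmas \ref{le:fssinvariant} and \ref{le:fuuinvariant}) --- the witness element $\mathfrak h$ changes as one slides along unstable leaves --- hence $\mathcal P$ is clopen and equals $M$, so every compact orbit carries its own trivial-holonomy group $\Delta^0_x$.

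The second gap is the germ-to-global bootstrap. You name the obstruction (for non-abelian $N$ the conjugates $a^n\delta a^{-n}$ differ from $\delta$, so expansion only shows that \emph{conjugates} fix distant points), but you do not resolve it: ``ascending normalizer induction showing kernel directions are stable under normalizers'' is a plan, not an argument, and Lemma \ref{le:ascendingchain} alone does not supply one. The paper's resolution is an algebraic device absent from your proposal: at a compact orbit, $\exp(\Delta^0_x)$ is a \emph{normal} subgroup of the isotropy lattice $\Delta_x$; intersecting it with the deepest non-trivially-met term $N^{k(x)}$ of the lower central series yields a subgroup $\Delta^1_x$ whose elements commute with $\Delta_x$, hence lie in the center of $N$ (by Zariski density of lattices in nilpotent groups, Raghunathan). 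Centrality kills the conjugation problem and lets the expansion argument close up, giving that $\Delta^1_x$ fixes $\cF^u(x)$ pointwise (Lemma \ref{le:xkfixed}); note the element finally placed in the kernel is this central element, not your original $\exp(\mathfrak h)$. Finally, your concluding appeal to $M=\bigcup_i\cF^u(\Lambda_i)$ and the local product structure glosses over a third necessary step: the trivially-acting subgroup could a priori differ from one basic set to another, and the paper must prove that $\Delta^1$ is constant on each basic set and then invoke connectedness of the graph $\mathfrak G$ (Lemma \ref{le:Gconnected}) to make the triviality global.
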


All this section is devoted to the proof of Proposition \ref{le:trivial}. More precisely, we prove that the action has injective stable holonomy, the injectivity of the unstable holonomy is then obtained by simply replacing the Anosov element by its opposite. Actually, we will  prove the reverse statement, assuming that the action has no injective stable holonomy, and proving that then it is not faithful.

\begin{definition}\hspace{-0.3cm}.
\begin{enumerate}
\item {\rm
 Let $\mathfrak h$ be a non trivial element of $\mathcal N.$ The set of points $x$ satisfying $(\exp\mathfrak h)\cF^{ss}(x) = \cF^{ss}(x)$ and $h_x^{\mathfrak h}=$ id is
denoted by $\mathcal H(\mathfrak h).$
}
\item {\rm
For every $x$ in $M$, the elements $\mathfrak h$ of $\mathcal N$ satisfying $(\exp\mathfrak h)\cF^{ss}(x) = \cF^{ss}(x)$ and $h_x^{\mathfrak h}=$ id is denoted by $\Delta^0_x.$
}
\item {\rm
If $\Delta^0_x$ is non trivial, $x$ is said \textit{to admit trivial holonomy.}
}
\item {\rm the set of points of $M$ admitting trivial holonomy is denoted by $\mathcal P.$}
\end{enumerate}
\end{definition}

\begin{lemma}
\label{le:fssinvariant}
The set $\mathcal H(\mathfrak h)$ is $\cF^{ss}$-invariant.
\end{lemma}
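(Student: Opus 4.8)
The plan is to show that $\mathcal H(\mathfrak h)$ is a union of strong stable leaves, i.e. that if $x \in \mathcal H(\mathfrak h)$ and $y \in \cF^{ss}(x)$, then $y \in \mathcal H(\mathfrak h)$. First I would dispatch the condition $(\exp\mathfrak h)\cF^{ss}(y) = \cF^{ss}(y)$: since $y$ and $x$ lie on the same strong stable leaf, $\cF^{ss}(y) = \cF^{ss}(x) =: L$, and therefore $(\exp\mathfrak h)\cF^{ss}(y) = (\exp\mathfrak h)\cF^{ss}(x) = \cF^{ss}(x) = \cF^{ss}(y)$, using $x \in \mathcal H(\mathfrak h)$. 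In particular $\exp(\mathfrak h)y \in \cF^{ss}(y) = L$, so the loop $\omega_y^{\mathfrak h}$ from Remark \ref{rk.zero}(3) is well defined, and it remains to prove that its holonomy $h_y^{\mathfrak h}$ is the identity.

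The key step is to compare the two defining loops $\omega_x^{\mathfrak h}$ and $\omega_y^{\mathfrak h}$ inside the common weak stable leaf $F := \cF^s(x) = \cF^s(y)$. Recall from Remark \ref{rk.zero}(2) that $F$ is the $\phi$-saturation of $L$, hence $N$-invariant; thus the orbit segments $c_x(t) = \exp(t\mathfrak h)x$ and $c_y(t) = \exp(t\mathfrak h)y$ (for $t \in [0,1]$) and, more generally, the whole square $\Phi(s,t) = \exp(t\mathfrak h)\gamma(s)$ stay in $F$, where $\gamma$ is a path from $x$ to $y$ inside $L$. The map $\Phi$ is a homotopy in $F$ realizing $c_x * (\exp(\mathfrak h)\gamma) \simeq \gamma * c_y$ rel endpoints, where $\exp(\mathfrak h)\gamma$ is a path inside $(\exp\mathfrak h)L = L$. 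Substituting this into $\gamma^{-1} * \omega_x^{\mathfrak h} * \gamma = \gamma^{-1} * c_x * \delta * \gamma$ (with $\delta \subset L$ the return path of $\omega_x^{\mathfrak h}$) and cancelling $\gamma^{-1} * \gamma$, I obtain $\gamma^{-1} * \omega_x^{\mathfrak h} * \gamma \simeq c_y * \delta''$, where $\delta'' = (\exp(\mathfrak h)\gamma)^{-1} * \delta * \gamma$ is a path from $\exp(\mathfrak h)y$ to $y$ lying entirely in $L$. Since $L$ is contractible (Remark \ref{rk.zero}(1)), $\delta''$ is homotopic rel endpoints in $L$ to the return path of $\omega_y^{\mathfrak h}$, whence $\gamma^{-1} * \omega_x^{\mathfrak h} * \gamma \simeq \omega_y^{\mathfrak h}$ as loops based at $y$ in $F$.

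To conclude, I would invoke that holonomy transverse to $\cF^s$ (on transversals modelled on $\cF^{uu}$) is invariant under homotopy of the loop and turns conjugation of loops into conjugation of germs. Denoting by $h(\gamma)\colon (\cF^{uu}(x),x) \to (\cF^{uu}(y),y)$ the holonomy transport along $\gamma$, the homotopy above yields $h_y^{\mathfrak h} = h(\gamma)^{-1} \circ h_x^{\mathfrak h} \circ h(\gamma)$. Since $x \in \mathcal H(\mathfrak h)$ forces $h_x^{\mathfrak h} = \mathrm{id}$, this conjugate is again the identity regardless of the composition convention, so $h_y^{\mathfrak h} = \mathrm{id}$ and $y \in \mathcal H(\mathfrak h)$.

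I expect the main obstacle to be the bookkeeping in the homotopy: verifying that every path and the whole square $\Phi$ remain inside the single weak leaf $F$ (which rests on the $N$-invariance of $F$ coming from the saturation statement of Remark \ref{rk.zero}(2)), and checking that the contractibility of $L$ legitimately absorbs the ambiguity in the choice of the return paths $\delta$ and $\delta''$, so that $\omega_y^{\mathfrak h}$ and $\gamma^{-1} * \omega_x^{\mathfrak h} * \gamma$ are genuinely homotopic rel basepoint. Everything else is formal once these two points are secured.
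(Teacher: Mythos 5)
Your proof is correct, and it reaches the conclusion by a genuinely different route from the paper's. The paper argues locally plus dynamically: for $y \in \cF^{ss}(x)$ sufficiently close to $x$, the loop $\omega_y^{\mathfrak h}$ can be chosen close to $\omega_x^{\mathfrak h}$, so the two loops are freely homotopic in $\cF^s(x)$ and $h_y^{\mathfrak h}=h_x^{\mathfrak h}=\mathrm{id}$; the case of a general $y$ is then reduced to this one by pushing $y$ toward $x$ with the contraction $\exp(t\mathfrak a_0)$ along the strong stable leaf. You instead treat an arbitrary $y$ in a single, purely topological step: the explicit square homotopy $\Phi(s,t)=\exp(t\mathfrak h)\gamma(s)$, which stays in the weak leaf $F$ because $F$ is the $\phi$-saturation of $L$ (Remark \ref{rk.zero}, item (2)), exhibits $\omega_y^{\mathfrak h}$ as homotopic rel basepoint to the $\gamma$-conjugate of $\omega_x^{\mathfrak h}$, with the contractibility of $L$ (Remark \ref{rk.zero}, item (1)) absorbing the ambiguity in the return paths; conjugation-invariance of the trivial germ then finishes. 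What your route buys: no dynamics at all, and in particular you sidestep a bookkeeping point hidden in the paper's iteration step, namely that applying $g=\exp(t\mathfrak a_0)$ carries $\mathcal H(\mathfrak h)$ to $\mathcal H({\rm ad}(g)\mathfrak h)$ (as the paper itself notes in the proof of Lemma \ref{le:fuuinvariant}), since $\mathfrak a_0$ need not commute with $\mathfrak h$ in a nonabelian nilpotent group, so the reduction must be stated with the transformed element and pulled back. What the paper's route buys is brevity and the reuse of the same local-perturbation idea that reappears in Lemma \ref{le:fuuinvariant}; your global homotopy has the added merit of making the paper's assertion that the two loops are ``freely homotopic'' completely explicit rather than a proximity argument.
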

\begin{proof}
Let $x$ be an element of $\mathcal H(\mathfrak h)$ and $y$ an element in $\cF^{ss}(x).$ If $y$ is sufficiently close to $x,$ the loop
$\omega_y^{\mathfrak h}$ considered in Remark \ref{rk.zero} in order to define $h^{\mathfrak h}_y$ can be chosen close to the loop $\omega_x^{\mathfrak h}.$
These loops are then
freely homotopic one to the other in $\cF^s(x)$. Hence we have $h^{\mathfrak h}_y=h^{\mathfrak h}_x$=id  and $y \in \mathcal H(\mathfrak h).$

The case for general $y$ in $\cF^{ss}(x)$ is obtained by iterating under $\exp(t{\mathfrak a_0}).$
\end{proof}

\begin{lemma}
\label{le:fuuinvariant}
The subset $\mathcal P$ is either empty, or the entire $M.$
\end{lemma}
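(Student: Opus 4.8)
The plan is to show that both $\mathcal P$ and its complement $M\setminus\mathcal P$ are open and then invoke the connectedness of $M$. The openness will come from the local product structure (Theorem \ref{thm:local product}): a subset of $M$ that is saturated simultaneously by the weak stable foliation $\cF^s$ and by the strong unstable foliation $\cF^{uu}$ is automatically open, since through any of its points the chart $[\cdot,\cdot]^u$ sweeps out an entire neighborhood contained in the set ($[y,z]^u\in\cF^s(z)$ with $z\in\cF^{uu}(x)$). Once $\mathcal P$ is known to be a union of leaves of both foliations, its complement is too; hence both are open. So everything reduces to proving that $\mathcal P$ is invariant under $\cF^s$ and under $\cF^{uu}$.

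First I would record that $\mathcal P$ is invariant under the action of $G=N$. If $x\in\mathcal H(\mathfrak h)$ and $g\in N$, then, using the $G$-invariance of the foliations (Theorem \ref{thm:inv_foliation}) and setting $\mathfrak h^g:=\operatorname{Ad}(g)\mathfrak h\in\mathcal N$, the element $\exp(\mathfrak h^g)=g\exp(\mathfrak h)g^{-1}$ preserves $\cF^{ss}(gx)=g\,\cF^{ss}(x)$, and the loop $\omega_{gx}^{\mathfrak h^g}$ is the image of $\omega_x^{\mathfrak h}$ under $\phi^g$; its holonomy is therefore conjugate to $h_x^{\mathfrak h}=\operatorname{id}$, hence trivial, so $gx\in\mathcal H(\mathfrak h^g)\subseteq\mathcal P$. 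Combining this $N$-invariance with Lemma \ref{le:fssinvariant} (each $\mathcal H(\mathfrak h)$, hence $\mathcal P$, is $\cF^{ss}$-invariant) and with the fact, recalled in Remark \ref{rk.zero}, that a weak stable leaf is the $\phi$-saturation of any of its strong stable leaves, I obtain that $\mathcal P$ is $\cF^{s}$-invariant.

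It remains to prove $\cF^{uu}$-invariance, which is the heart of the matter. I would first establish it locally: if $x\in\mathcal H(\mathfrak h)$ with $\mathfrak h\neq 0$, then every $y\in\cF^{uu}(x)$ close enough to $x$ lies in $\mathcal P$. Writing $h_x^{\mathfrak h}$ as the composition of the transverse map $y\mapsto\exp(\mathfrak h)y$ (which carries $\cF^{uu}(x)$ to $\cF^{uu}(\exp(\mathfrak h)x)$) with the $\cF^s$-holonomy $P_\sigma$ along the return path $\sigma\subset\cF^{ss}(x)$, which by the local product structure is $w\mapsto\cF^s(w)\cap\cF^{uu}(x)$, the hypothesis $h_x^{\mathfrak h}=\operatorname{id}$ forces $\exp(\mathfrak h)y\in\cF^s(y)$ for all such $y$. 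Thus $y$ and $\exp(\mathfrak h)y$ lie in a common weak stable leaf, and the description of loops in item $(3)$ of Remark \ref{rk.zero} yields an element $\mathfrak h_y$ with $(\exp\mathfrak h_y)\cF^{ss}(y)=\cF^{ss}(y)$ representing the associated loop $\omega_y^{\mathfrak h_y}$; since $h_x^{\mathfrak h}$ is the identity germ, the configuration is a local product in which nearby leaves carry the same loop with trivial holonomy, giving $h_y^{\mathfrak h_y}=\operatorname{id}$ and $\mathfrak h_y\neq 0$ by continuity of the (locally constant) fundamental groups. To globalize, I would exploit that $\mathcal P$ is $N$-invariant while $\phi^{\exp(-t\mathfrak a_0)}$ uniformly contracts $\cF^{uu}$ as $t\to+\infty$: for an arbitrary $y\in\cF^{uu}(x)$ with $x\in\mathcal P$ and $t$ large, the points $\phi^{\exp(-t\mathfrak a_0)}x$ and $\phi^{\exp(-t\mathfrak a_0)}y$ are close in a common strong unstable leaf, the first is in $\mathcal P$ by $N$-invariance, the local step places the second in $\mathcal P$, and applying $\phi^{\exp(t\mathfrak a_0)}\in N$ returns $y\in\mathcal P$. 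This gives $\cF^{uu}(x)\subseteq\mathcal P$.

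I expect the main obstacle to be precisely the local step, and within it the passage from $\exp(\mathfrak h)y\in\cF^s(y)$ to the existence of a genuinely nonzero $\mathfrak h_y\in\Delta^0_y$: one must argue, using that the identity holonomy germ trivializes $\cF^s$ throughout a neighborhood, that the lifted return path stays strong stable and closes up at $y$, and that the corresponding homotopy class remains nonzero under the canonical identification of the nearby leaves' fundamental groups.
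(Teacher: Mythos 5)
Your proof is correct and follows essentially the same route as the paper's: $N$-invariance via $gx\in\mathcal H(\operatorname{Ad}(g)\mathfrak h)$, weak stable invariance from Lemma \ref{le:fssinvariant}, local strong unstable invariance by lifting the trivial-holonomy loop to nearby leaves, and globalization by contracting along $\phi^{\exp(-t\mathfrak a_0)}$. The only difference is cosmetic: you make explicit the final openness-plus-connectedness argument (via the local product structure) that the paper leaves implicit in ``the lemma follows.''
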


\begin{proof}
Let $x$ be an element of $\mathcal P$, i.e. an element of $\mathcal H(\mathfrak h)$ for some non-trivial element $\mathfrak h.$
For every $g$ in $N,$ we have $gx \in \mathcal H({\rm ad}(g)\mathfrak h)$ hence $\mathcal P$ is $N$-invariant. It follows, together with Lemma \ref{le:fssinvariant}, that $\mathcal P$ is $\cF^s$-invariant.

Let now $y$ be an element of $\cF^{uu}(x).$ Since the holonomy $h^{\mathfrak h}_x$ is trivial, if $y$ is sufficiently close to $x$
in $\cF^{uu}(x),$  the image $(\exp\mathfrak h)y$ lies
in the same stable leaf $\cF^s(y)$ than $y.$ Moreover $\phi_{\mathfrak h}$ maps $\cF^{ss}(y)$ close to itself (since it preserves
$\cF^{ss}(x)$) and therefore we have $\cF^{ss}(y) = (\exp\mathfrak h')\cF^{ss}(y)$ for some element $\mathfrak h'$ close to $\mathfrak h.$ The loop $\omega_y^{\mathfrak h'}$
can be chosen so that it remains close to $\omega_x^{\mathfrak h}$; hence to be the lifting in $\cF^s(y)$ of $\omega_x^{\mathfrak h}$. Since $h^{\mathfrak h}_x$ is trivial,
it follows that $h^{\mathfrak h'}_y$ is also trivial, hence $y \in \mathcal H(\mathfrak h) \subset \mathcal P.$

Now for a general $y$ in $\cF^{uu}(x),$ not necessarily close to $x,$ some iterate $\exp(t\mathfrak a_0)y$ will
be close to  $\exp(t\mathfrak a_0)x$ in $\cF^{uu}(\exp(t\mathfrak a_0)x),$ hence in $\mathcal P$ anyway.

The set $\mathcal P$ is therefore invariant by the foliations $\cF^s$ and $\cF^u.$ The lemma follows.
\end{proof}

From now on, we assume $\mathcal P = M.$

\begin{lemma}
Let $x_0$ be an element of a compact $\phi$-orbit $\cO_0$. Then  $\exp(\Delta_x^0)$ is a normal subgroup of the $N$-stabilizer $\Delta_x.$
\end{lemma}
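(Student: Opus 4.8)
The plan is to identify $\exp(\Delta^0_{x_0})$ with the kernel of the holonomy representation of the weak stable leaf through $x_0$, and then to show that for a point lying on a compact orbit this leaf has fundamental group exactly $\Delta_{x_0}$. First I would set $L := \cF^{ss}(x_0)$ and $F := \cF^s(x_0)$, and let $\Gamma_L \subset N$ be the stabilizer of $L$ introduced in Remark \ref{rk.zero}. By items $(2)$--$(3)$ of that remark, $\pi_1(F,x_0)$ is canonically identified with $\Gamma_L$, every loop being homotopic to some $\omega_{x_0}^{\mathfrak h}$ with $\exp(\mathfrak h) \in \Gamma_L$, and the holonomy representation $\rho \colon \Gamma_L \to \mathrm{Homeo}_{loc}(\cF^{uu}(x_0))$ sends $\exp(\mathfrak h)$ to $h_{x_0}^{\mathfrak h}$. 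Since $\exp$ is a diffeomorphism of the simply connected nilpotent group $N$, the very definition of $\Delta^0_{x_0}$ gives $\exp(\Delta^0_{x_0}) = \ker\rho$, which is automatically a normal subgroup of $\Gamma_L$.

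It therefore suffices to prove that $\Gamma_L = \Delta_{x_0}$. One inclusion is immediate: every element fixing $x_0$ preserves the $G$-invariant leaf $L = \cF^{ss}(x_0)$ (Theorem \ref{thm:inv_foliation}), so $\Delta_{x_0} \subseteq \Gamma_L$. The content is the reverse inclusion.

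For the reverse inclusion I would exploit an Anosov contraction. Since $\cO_0$ is compact, Remark \ref{rmk:lattice_anosov} provides an Anosov element $a = \exp(\mathfrak a) \in \Delta_{x_0}$ with $\mathfrak a \in \cA_0$; thus $\phi^a$ fixes $x_0$ and contracts the leaf $L$ towards $x_0$. Moreover, the orbit $\cO_0$ lies in $F$ (because $\cO \subseteq \cF^s$), and by Remark \ref{rk.zero}$(2)$ the restriction of $p_F$ to the compact orbit $\cO_0$ is a covering map; hence each of its fibres—in particular $\cO_0 \cap L$—is finite. Now fix $g \in \Gamma_L$; then $gx_0 \in L$, and the points
$$\phi^{a^n}(g x_0) = (a^n g a^{-n}) x_0$$
lie in $\cO_0 \cap L$ and converge to $x_0$ as $n \to +\infty$. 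As $\cO_0 \cap L$ is finite, $(a^n g a^{-n})x_0 = x_0$ for all large $n$, i.e. $a^n g a^{-n} \in \Delta_{x_0}$; since $a^n \in \Delta_{x_0}$, conjugating back yields $g \in \Delta_{x_0}$. Hence $\Gamma_L \subseteq \Delta_{x_0}$, and combined with the first paragraph $\exp(\Delta^0_{x_0}) = \ker\rho$ is normal in $\Gamma_L = \Delta_{x_0}$.

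The main obstacle is this second step, $\Gamma_L = \Delta_{x_0}$: the two facts that make it work are the finiteness of $\cO_0 \cap L$ (which rests on the fibration structure of the weak stable leaf over a compact quotient of $N$) and the availability of an Anosov element inside the isotropy group $\Delta_{x_0}$ fixing $x_0$ \emph{itself}, so that conjugation by its powers stays inside $\Delta_{x_0}$. I would also double-check the homomorphism property of $\rho$, so that $\ker\rho$ is genuinely normal; this is exactly where the identification $\pi_1(F,x_0) \cong \Gamma_L$ of Remark \ref{rk.zero} is essential.
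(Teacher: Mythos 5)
Your proof is correct, but it is organized differently from the paper's. The paper's argument hinges on the single geometric fact that $\cF^{ss}(x_0) \cap \cO_0 = \{x_0\}$, obtained from the local product structure (Theorem \ref{thm:local product}) and contraction by an Anosov element fixing $x_0$; this immediately gives $\exp(\Delta^0_{x_0}) \subset \Delta_{x_0}$, the subgroup property is checked by hand, and normality comes from the explicit equivariance of holonomy under conjugation: $h_{x_0}^{\operatorname{ad}(g)\mathfrak h}$ is the conjugate of $h_{x_0}^{\mathfrak h}$ by $\phi^g$, hence trivial when $h_{x_0}^{\mathfrak h}$ is. You instead realize $\exp(\Delta^0_{x_0})$ as the kernel of the holonomy representation of $\pi_1(\cF^s(x_0),x_0)\cong\Gamma_L$, so that both the subgroup property and normality in $\Gamma_L$ are automatic, and you reduce the lemma to the equality $\Gamma_L=\Delta_{x_0}$ --- which is in fact equivalent to the paper's key point, since any $y=gx_0\in \cF^{ss}(x_0)\cap\cO_0$ forces $\phi^g\cF^{ss}(x_0)=\cF^{ss}(x_0)$ by $G$-invariance of the foliation. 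Your proof of that equality also runs on a slightly different mechanism: finiteness of $\cO_0\cap\cF^{ss}(x_0)$ via the covering $p_F|_{\cO_0}$ of Remark \ref{rk.zero}, then contraction by an Anosov element $a\in\Delta_{x_0}$ inside this finite set and conjugating back, whereas the paper gets the singleton intersection directly from the local product structure before globalizing by the same contraction. What your route buys is that the group theory comes for free from ``kernel of a homomorphism''; what it costs is the reliance on the identification $\pi_1(\cF^s(x_0),x_0)\cong\Gamma_L$ of Remark \ref{rk.zero} being a group isomorphism compatible with $\exp(\mathfrak h)\mapsto h_{x_0}^{\mathfrak h}$ --- exactly the point you rightly flag as needing verification, and which the fibration structure does supply. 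Both arguments share the same dynamical core: an Anosov element inside the isotropy group of a compact orbit (Remark \ref{rmk:lattice_anosov}) contracting the strong stable leaf.
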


\begin{proof}
The key point is that the intersection between the strong stable leaf $\cF^{ss}(x_0)$ and $\cO_0$ is reduced to $x_0.$ Indeed, by the local product structure the intersection
between $\cO_0$ and $\cF^{ss}_\delta(x)$ is reduced to $x_0,$ and the general claim follows by iterating under (the inverse of) a $\xi$-Anosov element fixing $x_0.$

Therefore we have $\exp(\Delta_x^0) \subset \Delta_x.$ It is straightforward to check that $\exp(\Delta_x^0)$ is a subgroup.

The fact that $\exp(\Delta_x^0)$ is normal in the stabilizer follows easily from the fact that the holonomy $h^{{\rm ad}g(\mathfrak h)}_x$ is the conjugate under $\phi_g$ of
$h^{\mathfrak h}_x$, hence trivial if $h^{\mathfrak h}_x$ is trivial.
\end{proof}

Let $0 = N^{r+1} \lhd N^r \lhd ... \lhd N^1=[N, N] \lhd N^0=N$ the lower central serie for $N.$
For every $x$ in Comp$(\phi)$, let $k(x)$ be the biggest integer $k$ for which $\exp(\Delta_x^0) \cap N^k$ is not trivial. Observe that $k$ is $N$-invariant. We denote
by $\Delta_x^1$ the intersection $\exp(\Delta_x^0) \cap N^{k(x)}.$

%From now on, $x_0$ is a point in a compact orbit $\cC_0$ realizing the maximal value for $k$.

\begin{lemma}
\label{le:xkfixed}
For every $x$ in Comp$(\phi)$ the weak unstable leaf $\cF^u(x)$ is fixed pointwise by $\Delta_x^1.$
\end{lemma}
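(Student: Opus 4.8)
The plan is to fix $a \in \Delta_x^1$, write $a = \exp(\mathfrak h)$ with $\mathfrak h \in \Delta_x^0$ and $a \in N^{k(x)}$, and first reinterpret the triviality of the holonomy $h_x^{\mathfrak h}$ as a fixed-point statement for $a$ on $\cF^{uu}(x)$. Since $a$ fixes $x$ (recall $\Delta_x^1 \subset \exp(\Delta_x^0) \subset \Delta_x$), the defining loop $\omega_x^{\mathfrak h}$ of Remark \ref{rk.zero}(3) degenerates to the orbit loop $t \mapsto \exp(t\mathfrak h)x$, $t \in [0,1]$, the final path inside $\cF^{ss}(x)$ being constant. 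Lifting this loop by the foliation-preserving diffeomorphisms $\phi^{\exp(t\mathfrak h)}$ (which carry $\cF^{uu}(x)$ to $\cF^{uu}(\exp(t\mathfrak h)x)$) shows that the holonomy germ along it, read on the transversal $\cF^{uu}(x)$, is exactly the germ at $x$ of $y \mapsto \phi^a(y)$. Hence $h_x^{\mathfrak h} = \mathrm{id}$ means precisely that $a$ fixes a neighborhood of $x$ in $\cF^{uu}(x)$ pointwise.

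The crucial step is then to promote $a$ to a central element of $N$. First I would show that $a$ centralizes the whole stabilizer $\Delta_x$: for $g \in \Delta_x$ the commutator $[g,a]$ lies in $N^{k(x)+1}$, because $a \in N^{k(x)}$ and $[N, N^{k(x)}] = N^{k(x)+1}$; on the other hand $[g,a]$ lies in $\exp(\Delta_x^0)$, since $\exp(\Delta_x^0)$ is a subgroup normalized by $\Delta_x$ (the preceding lemma). By the maximality defining $k(x)$ we have $\exp(\Delta_x^0) \cap N^{k(x)+1} = \{e\}$, so $[g,a] = e$. Thus $a$ commutes with the lattice $\Delta_x$; since a lattice in the simply connected nilpotent group $N$ is Zariski dense, its centralizer in $N$ is the center $Z(N)$, and therefore $a \in Z(N)$.

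With centrality in hand, the remaining extensions are routine. To pass from the germ to all of $\cF^{uu}(x)$, choose by Remark \ref{rmk:lattice_anosov} an Anosov element $b \in \Delta_x$ with $\log b \in \cA_0$; it fixes $x$, and $\phi^{b^{-1}}$ contracts $\cF^{uu}(x)$ towards $x$. For $z \in \cF^{uu}(x)$ the points $b^{-n}z$ converge to $x$, so $a(b^{-n}z) = b^{-n}z$ for $n$ large; as $a$ commutes with $b$ this reads $b^{-n}(az) = b^{-n}z$, whence $az = z$. Finally, $\cF^u(x)$ is the $\phi$-saturation of $\cF^{uu}(x)$ (Remark \ref{rk.zero}(2)), so every $p \in \cF^u(x)$ has the form $p = g w$ with $g \in N$ and $w \in \cF^{uu}(x)$; using $a \in Z(N)$ and $aw = w$ we get $ap = g(aw) = gw = p$. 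This gives the claim.

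The delicate point is the middle paragraph, and more precisely the need for $a$ to be genuinely central rather than merely commuting with $\Delta_x$. Indeed $a$ acts on the base $P = N/\Gamma$ of the fibration $\cF^u(x) \to P$ by left translation, and a priori it could permute the strong unstable leaves inside the weak leaf, which would be incompatible with fixing $\cF^u(x)$ pointwise; it is exactly the centrality forced by the maximality of $k(x)$ that rules this out. This is why $\Delta_x^1$ is extracted from the deepest nontrivial term of the lower central series, and the two auxiliary facts requiring care are the identification of the holonomy germ with $a|_{\cF^{uu}(x)}$ and the Zariski density of lattices in simply connected nilpotent Lie groups.
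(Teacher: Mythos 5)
Your proof is correct and follows essentially the same route as the paper: the commutator argument combined with the maximality of $k(x)$ shows $\Delta_x^1$ centralizes $\Delta_x$, Zariski density of the lattice (Raghunathan) upgrades this to $\Delta_x^1 \subset Z(N)$, and then centrality plus triviality of the holonomy yields the pointwise fixing of $\cF^u(x)$. The only difference is one of exposition: you make explicit two steps the paper leaves implicit, namely the identification of the holonomy germ $h_x^{\mathfrak h}$ with the germ of $\phi^a$ on $\cF^{uu}(x)$, and the propagation from a fixed neighborhood to the whole strong unstable leaf via backward iteration by a commuting Anosov element of $\Delta_x$.
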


\begin{proof}
Then, for every $g$ in $\Delta_x$ and every $\gamma$ in $\Delta_x^1,$
the commutator $g\gamma g^{-1}\gamma^{-1}$ lies in $\exp(\Delta_x^0) \cap N^{k+1},$ hence is trivial. It follows that $\Delta^1_x$ lies in the centralizer of $\Delta_x.$ Since $N$ is nilpotent and $\Delta_x$ a lattice in it, $\Delta_x^1$ is contained in the center $Z$ of $N$ (indeed, $\Delta_x^1$ lies in the center of the Zariski closure of $\Delta_x,$ which is the entire $N$, see Theorem $2.10$ in \cite{raghu}).

It follows that for every $h = \exp(\mathfrak h)$ in $\Delta_x^1$, the entire orbit $\cO(x)$ is fixed pointwise by $h.$ Moreover, for every $y$ in $\cO(x),$ the unstable leaf $\cF^{uu}(y)$ is preserved, and
fixed pointwise since the holonomy $h_y^{\mathfrak h}$ is trivial. The lemma follows.
\end{proof}

%, and also constant on intersections of leaves of $\cF^ss$ with Comp$(\phi)$

\begin{lemma}
Let $x_1$ and $x_2$ be two points in Comp$(\phi).$ If $x_1$ and $x_2$ belongs to the same basic set $\Lambda_i$, then we have $k(x_1) = k(x_2)$ and $\Delta_{x_1}^1 = \Delta^1_{x_2}.$
\end{lemma}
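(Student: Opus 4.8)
The plan is to establish the inclusion $\Delta_{x_1}^1 \subseteq \Delta_{x_2}^1$; the reverse inclusion will follow by exchanging the roles of $x_1$ and $x_2$, and the equality $k(x_1)=k(x_2)$ will drop out along the way. First I would record two reductions. Since the closures of distinct equivalence classes used to build the basic sets are pairwise disjoint (proof of Theorem \ref{thm:espdecomp}), two compact orbits lying in the same $\Lambda_i$ lie in the same equivalence class; hence $x_1\sim x_2$, so there are transverse intersection points $p_0\in\cF^u(x_1)\cap\cF^s(x_2)$ and $q_0\in\cF^s(x_1)\cap\cF^u(x_2)$. Moreover, since $\Delta^0_{gx}={\rm ad}(g)\Delta^0_x$ and since $\Delta^1_x$ is contained in the center $Z$ of $N$ (proof of Lemma \ref{le:xkfixed}), both $k(x)$ and $\Delta^1_x$ are constant along $N$-orbits, so I may move $x_1$ and $x_2$ freely inside their orbits.

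Fix $\gamma=\exp(\mathfrak h)\in\Delta_{x_1}^1$. Replacing $x_2$ by a suitable point of its orbit I would arrange $p_0\in\cF^{ss}(x_2)$, and by Remark \ref{rmk:lattice_anosov} choose an Anosov element $\mathfrak a\in\cA_0$ fixing $x_2$. Now $\gamma$ is central, hence commutes with every $\exp(t\mathfrak a)$; since by Lemma \ref{le:xkfixed} it fixes $\cF^u(x_1)$, and hence $\cF^{uu}(p_0)\subset\cF^u(x_1)$, pointwise, centrality propagates this to $\gamma$ fixing $\cF^{uu}(\exp(t\mathfrak a)p_0)$ pointwise for every $t\ge 0$. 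Setting $p=\exp(t\mathfrak a)p_0$ with $t$ large, I obtain a point $p\in\cF^{ss}(x_2)$ as close to $x_2$ as I wish, with $\gamma$ fixing $\cF^{uu}(p)$ pointwise. Finally, since $\gamma$ fixes each point $\exp(t\mathfrak a)p_0$ and $\exp(t\mathfrak a)p_0\to x_2$, continuity gives $\gamma x_2=x_2$.

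It remains to show that the holonomy $h_{x_2}^{\mathfrak h}$ is trivial, which is the heart of the matter. Working in a local product chart at $x_2$ given by Theorem \ref{thm:local product}, with coordinates $(s,o,u)$ adapted to $\cF^{ss}$, $\cO$ and $\cF^{uu}$, I would argue as follows. For $w\in\cF^{uu}(x_2)$ near $x_2$ the intersection $w'=\cF^{ss}(w)\cap\cF^{uu}(p)$ is a single well-defined point (both leaves sit in the slice $\{o=0\}$), and $\gamma$ fixes it. Since $\gamma$ preserves $\cF^{ss}$ and fixes $w'\in\cF^{ss}(w)$, it preserves the leaf $\cF^{ss}(w)$, so $\gamma w\in\cF^{ss}(w)$; as also $\gamma w\in\cF^{uu}(x_2)$, the local product structure forces $\gamma w=w$. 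Thus $\gamma$ fixes $\cF^{uu}(x_2)$ pointwise near $x_2$, which, exactly as in the proof of Lemma \ref{le:fuuinvariant}, means $h_{x_2}^{\mathfrak h}=$ id (and $(\exp\mathfrak h)\cF^{ss}(x_2)=\cF^{ss}(x_2)$). Hence $\mathfrak h\in\Delta^0_{x_2}$, and since $\gamma\in N^{k(x_1)}$ we get $\gamma\in\exp(\Delta^0_{x_2})\cap N^{k(x_1)}$. As $\Delta^1_{x_1}$ is nontrivial, this shows $\exp(\Delta^0_{x_2})\cap N^{k(x_1)}$ is nontrivial, whence $k(x_2)\ge k(x_1)$; symmetry (using $q_0$) gives $k(x_1)=k(x_2)=:k$, and then $\Delta^1_{x_1}\subseteq\exp(\Delta^0_{x_2})\cap N^{k}=\Delta^1_{x_2}$, with equality by symmetry.

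The main obstacle I anticipate is precisely this holonomy transfer. Two points need care: keeping all the relevant leaves inside one product chart, which is why $p$ must be pushed close to $x_2$ and why the centrality of $\gamma$ is essential, since it is exactly what preserves the property ``$\gamma$ fixes $\cF^{uu}(p)$ pointwise'' under this pushing; and correctly exploiting the local product structure to upgrade $\gamma w\in\cF^{ss}(w)$ to $\gamma w=w$, noting that $\cF^{ss}(w)\cap\cF^{uu}(x_2)$ reduces to a single point only because both leaves lie in the same orbit-slice of the chart, the orbit directions being accounted for separately.
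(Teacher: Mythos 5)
Your overall architecture (reduce to $x_1\sim x_2$, move within orbits, fix $\gamma=\exp(\mathfrak h)\in\Delta^1_{x_1}$, prove $\mathfrak h\in\Delta^0_{x_2}$, then count) matches the paper's, but the step you yourself call the heart of the matter fails as written. You set $w'=\cF^{ss}(w)\cap\cF^{uu}(p)$ for $w\in\cF^{uu}(x_2)$, $p\in\cF^{ss}(x_2)$, and justify its existence by saying both leaves sit in a common slice $\{o=0\}$ of a chart adapted to $\cF^{ss}$, $\cO$ and $\cF^{uu}$. No such chart exists in general: the distribution $E^{ss}\oplus E^{uu}$ is typically non-integrable, so there is no local transversal to the orbits saturated simultaneously by strong stable and strong unstable plaques. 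Already for the geodesic flow of a closed hyperbolic surface (an Anosov action of the nilpotent group $\RR$, hence within the scope of this lemma) $E^{ss}\oplus E^{uu}$ is the contact plane field: writing $h^+_t$, $h^-_s$ for the two unipotent one-parameter subgroups of $\mathrm{PSL}(2,\RR)$, the strong leaves through the points displaced from $x_2$ by $h^-_{s_0}$ and by $h^+_{t_0}$ meet only if $h^-_{s_0}h^+_t=h^+_{t_0}h^-_s$ has a solution, and a direct matrix computation shows this forces $s_0t_0=0$. Note that Theorem \ref{thm:local product} only pairs a \emph{strong} leaf with a \emph{weak} leaf of the opposite type; it never produces strong--strong intersections. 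So your point $w'$ need not exist, and with it collapses the transfer of pointwise fixing from $\cF^{uu}(p)$ to $\cF^{uu}(x_2)$, hence the triviality of $h^{\mathfrak h}_{x_2}$.

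The gap is fixable inside your scheme: take instead $w'=\cF^{s}_{2\delta}(w)\cap\cF^{uu}_{2\delta}(p)$, which is exactly the point $[p,w]^u$ of Theorem \ref{thm:local product}, so it exists and is unique. Since $w'\in\cF^{uu}(p)$ it is fixed by $\gamma$; since $\gamma$ preserves the foliation $\cF^s$ (Theorem \ref{thm:inv_foliation}) and fixes $w'$, we get $\gamma\cF^s(w)=\cF^s(\gamma w')=\cF^s(w)$, so $\gamma w\in\cF^s(w)\cap\cF^{uu}(x_2)$ (using $\gamma x_2=x_2$, which you established), and the uniqueness in Theorem \ref{thm:local product} forces $\gamma w=w$ once $p$ and $w$ are close enough to $x_2$. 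But you should also see that the paper makes the pushing and the product-chart work unnecessary: once Lemma \ref{le:xkfixed} tells you that $\gamma$ fixes $\cF^u(x_1)$ pointwise, the point $p_0\in\cF^u(x_1)\cap\cF^{ss}(x_2)$ is fixed and its strong unstable leaf $\cF^{uu}(p_0)\subset\cF^u(x_1)$ is fixed pointwise, so $p_0\in\mathcal H(\mathfrak h)$; then Lemma \ref{le:fssinvariant} (the $\cF^{ss}$-invariance of $\mathcal H(\mathfrak h)$, proved by a homotopy-of-loops argument rather than by local product geometry) gives $x_2\in\mathcal H(\mathfrak h)$, i.e. $\mathfrak h\in\Delta^0_{x_2}$, in one line. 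Your concluding count ($k(x_2)\geq k(x_1)$, symmetry, and $\Delta^1_{x_1}\subseteq\exp(\Delta^0_{x_2})\cap N^{k}=\Delta^1_{x_2}$) is correct and agrees with the paper.
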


\begin{proof}
Let $y$ be a point in $\cF^u(x_1) \cap \cF^{ss}(x_2).$ Let $\mathfrak h$ be an element of $\Delta_{x_1}^0$ whose projection by $\exp$ is in $\Delta^1_{x_1}.$ According to Lemma \ref{le:xkfixed},
$y$ is a fixed point of $\exp(\mathfrak h)$, hence belongs to $\mathcal H(\mathfrak h).$ According to Lemma \ref{le:fssinvariant}, $x_2$ lies in $\mathcal H(\mathfrak h).$ Hence, $\mathfrak h$ lies in
$\Delta_{x_2}^0.$ Moreover, since $\exp(\mathfrak h) \in N^{k(x_1)}$, we have:
$$k(x_2) \geq k(x_1).$$

In a symetric way, one shows that every element in $\Delta_{x_2}^0$ with exponential in $\Delta^1_{x_2}$ is contained in $\Delta_{x_1}^0$ and that $k(x_1) \geq k(x_2).$ The lemma follows.
\end{proof}

It follows that for every basic set $\Lambda_i$, there is a non trivial discrete subgroup $\Delta_i$ of the center $Z$ of $N$ fixing $\Lambda_i$ pointwise. Actually, by Lemma \ref{le:xkfixed}, the entire $\cF^{u}(\Lambda_i)$ is contained in Fix$(\Delta_i).$

More precisely, there is an integer $k_i$ (the common value of $k$ on Comp$(\phi) \cap \Lambda_i$) such that:

-- no non trivial element of $N^{k_i+1}$ fixes pointwise $\cF^{u}(\Lambda_i)$ (indeed, such an element would be an element
of $\Delta_x^0 \cap N^{k(x)+1}$ for any $x$ in  Comp$(\phi) \cap \Lambda_i$),

-- $\Delta_i$ is exactly the subgroup of $N^{k_i}$ comprising the elements fixing  pointwise $\cF^{u}(\Lambda_i),$

Moreover, $\Delta_i$ is contained in the center $Z$ of $N.$

It follows from these properties that if $\Lambda_j$ is any basic set contained in the closure of $\cF^u(\Lambda_i)$ then
we have $k_j = k_i$ and $N^{k_i} = N^{k_j}.$ Since the graph $\mathfrak G$ is connected (Definition \ref{def:graph}, Lemma \ref{le:Gconnected}),
all the basic sets have the same integer $k_i$ and the same $N^{k_i}$.
It follows that elements of $N^{k_i}$ acts trivially on the entire $M.$ It is impossible if $\phi$ is faithful.

The proof of Proposition \ref{le:trivial} is complete.

\section{Actions of codimension one}
In this section, we keep considering an Anosov action $\phi: N \times M \to M$ of a simply connected nilpotent Lie group $N$, but we know add the additional hypothesis:
\medskip

\begin{center}
\textbf{$E^{uu}$ has dimension one.}
\end{center}

%The main goal of this section is to prove Theorem \ref{th:transitif}, i.e. that if $k \geq 3,$ then $\Omega(\phi) = M$ (Recall that $n$ is the dimension of $G=N,$ and $M$ is of dimension $n+k$). We will also establish several results on the orbit space of the action, and the leaf spaces.

%This section is essentially mimicking the content of section $4.3$ and $4$ of \cite{paper1} where the abelian case is proved. We follow step by step \cite{paper1},
%mentioning only which part of the arguments we have to adapt in the nilpotent case. When no comment is given, it means that the arguments in \cite{paper1} can be applied word-by-word.

\subsection{Affine structures along the strong unstable leaves}
Most of the content of this section is classical. We refer to \cite[section $4.1$]{paper1} for more details.
An affine structure of class $C^2$ on $\RR$ is equivalent to the data of differential $1$-form on $\RR,$ hence once fixed a differential $1$-form $\omega_0$ on $M$ which does not vanish on $E^{uu},$ any continuous function $f: M \to \RR$ induces an affine structure along leaves of $\cF^{uu}:$ the one induced by the restriction of $f\omega_0$ on the leaf. We say that such an affine structure is \textit{$\phi$-invariant} if for every $a \in N$ and every $x \in M$ the restriction of $\phi^a$ induces an affine diffeomorphism bewtween $\cF^{uu}(x)$ and $\cF^{uu}(\phi^a(x)).$

\begin{proposition}
\label{pro:affine}
There exists an unique affine structure along $\cF^{uu}$ depending continuously on the points and invariant by the action $\phi.$
\end{proposition}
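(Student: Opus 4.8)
The plan is to reduce the statement to a single twisted cohomological equation along $\cF^{uu}$ and then solve it by the same normalizer-chain mechanism used for Theorem \ref{thm:inv_foliation}. Fix $\omega_0$, let $U$ be the continuous section of $E^{uu}$ with $\omega_0(U)\equiv 1$, and write $D_u h:=dh(U)$ for the leafwise derivative. First I would introduce the positive multiplicative cocycle $J^a$ defined by $D\phi^a(U)=J^a\,(U\circ\phi^a)$, which satisfies the chain rule $J^{ab}=(J^a\circ\phi^b)\,J^b$. A continuous density $f$ equips each leaf with the affine structure carried by $f\omega_0$, and the chain-rule computation $D_u(\log f\circ\phi^a)=J^a\,((D_u\log f)\circ\phi^a)$ shows that $\phi^a\colon\cF^{uu}(x)\to\cF^{uu}(\phi^a x)$ is affine exactly when $\rho^a:=(f\circ\phi^a)\,J^a/f$ is constant along each leaf. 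Since the induced structure depends on $f$ only through $\psi:=D_u\log f$, producing a unique $\phi$-invariant affine structure continuous in the point is equivalent to producing a unique continuous $\psi$ solving
\begin{equation}
\psi \;=\; J^a\,(\psi\circ\phi^a)+\beta^a,\qquad \beta^a:=D_u\log J^a,
\tag{$\ast$}
\end{equation}
for every $a\in N$; the relation $\beta^{ab}=J^b\,(\beta^a\circ\phi^b)+\beta^b$ makes this system compatible under products. The leafwise smoothness needed to make $\beta^a$ a bounded continuous function I would treat as known (cf. \cite[Section 4.1]{paper1} and \cite{hirpush}).

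Let $a_-:=\exp(-\mathfrak a_0)$; since $\mathfrak a_0$ is $\xi$-Anosov, $\phi^{a_-}$ contracts $E^{uu}$ uniformly, so $J^{na_-}\to 0$ uniformly. For uniqueness, the difference $\eta$ of two solutions of $(\ast)$ for the single element $a_-$ satisfies $\eta=J^{a_-}\,(\eta\circ\phi^{a_-})$, hence $\eta=J^{na_-}\,(\eta\circ\phi^{na_-})$ for all $n$; as $\eta$ is bounded on the compact $M$, this forces $\eta\equiv 0$. For existence I would set
\[
\psi:=\sum_{k\ge 0} J^{ka_-}\,(\beta^{a_-}\circ\phi^{ka_-}),
\]
which converges uniformly, hence defines a continuous function, and which solves $(\ast)$ for $a=a_-$ by a telescoping identity.

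The main obstacle is to promote this solution of $(\ast)$ for the single element $a_-$ to a solution for all of $N$. I would measure the failure by $E^a:=\psi-J^a\,(\psi\circ\phi^a)-\beta^a$; the cocycle relations give $E^{ab}=E^b+J^b\,(E^a\circ\phi^b)$, so $S:=\{a\in N:E^a\equiv 0\}$ is a subgroup containing $a_-$. The key is a conjugation identity: let $H\subseteq S$ be a subgroup containing $a_-$ and let $g$ normalize $H$; then $b:=ga_-g^{-1}\in H\subseteq S$, and since the splitting is $G$-invariant (Theorem \ref{thm:inv_foliation}) the element $b$ still contracts $E^{uu}$. Expanding the error of the equal products $bg=ga_-$ in two ways, using $E^b=0$ and $E^{a_-}=0$, gives
\[
E^g=E^{bg}=E^{ga_-}=J^{a_-}\,(E^g\circ\phi^{a_-}),
\]
which is the homogeneous equation for the contracting element $a_-$; by the uniqueness argument $E^g\equiv 0$, so $g\in S$. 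Hence $S$ contains the normalizer of every subgroup it contains through $a_-$.

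To conclude, I would run the ascending normalizer chain exactly as in the proof of Theorem \ref{thm:inv_foliation}. The commuting case $b=a_-$ of the identity already yields that the centralizer of $a_-$, in particular the one-parameter subgroup $\exp(\RR\mathfrak a_0)$, lies in $S$; taking $\mathfrak h_0:=\RR\mathfrak a_0$ and $\mathfrak h_{i+1}:=N_{\mathcal N}(\mathfrak h_i)$ and applying the conjugation identity inductively gives $\exp(\mathfrak h_i)\subseteq S$ for every $i$. By Lemma \ref{le:ascendingchain} the chain reaches $\mathcal N$, so $S=N$ and $\psi$ solves $(\ast)$ for every $a\in N$. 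Integrating $\psi$ along the leaves of $\cF^{uu}$, which are lines and hence simply connected, then recovers the desired invariant affine structure, unique and continuous in the transverse point. I expect the genuinely delicate step to be precisely this passage from a single contracting element to the whole group; the remainder is linear cohomological bookkeeping powered by the uniform contraction of the Anosov element.
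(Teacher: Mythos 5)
Your proposal is correct and follows essentially the same route as the paper: existence and uniqueness of the structure invariant under the one-parameter group of the Anosov element via uniform contraction (your geometric series for the cohomological equation $(\ast)$ is precisely the contracting fixed-point argument the paper cites from \cite{barthese} and \cite[Proposition 3.4.1]{ghys3}), followed by the uniqueness-plus-normalizer mechanism and the ascending chain Lemma \ref{le:ascendingchain} to upgrade invariance from the one-parameter subgroup to all of $N$. Your conjugation identity $E^g=J^{a_-}\,(E^g\circ\phi^{a_-})$ is just an explicit, coboundary-level rendering of the paper's one-line step ``since it is unique, this affine structure is also $N(H_0)$-invariant.''
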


\begin{proof}
Let $\mathfrak a_0$ be an Anosov element of $\mathcal N$ expanding the leaves of $\cF^{uu}$. Then, a contracting fixed-point Theorem on a suitable Banach space ensures
the existence of a unique affine structure along $\cF^{uu}$
invariant by the one-parameter subgroup  $H_0$ generated by $\mathfrak a_0$ (see \cite{barthese} or  \cite[Proposition $3.4.1,$ Remarque $3.4.2$]{ghys3}).
Since it is unique, this affine structure is also $N(H_0)$-invariant. By an induction involving Lemma \ref{le:ascendingchain} we finally get that this affine structure is $N$-invariant.

For a proof of the completeness of the leaves, see \cite[\S $1.3$]{ghys3}.
\end{proof}

\begin{corollary}
\label{cor:linearizable}
Let $\mathfrak a$ be an element of $\mathcal N,$ and $x$ an element of $M$ such that $\phi_{\mathfrak a}(x) = x.$
Then the restriction of $\phi_{\mathfrak a}$ to $\cF^{uu}(x)$ is differentially linearizable. In particular, either $\cF^{uu}(x)$
pointwise fixed by $a$, or $x$ is an isolated fixed point of  $\phi_{\mathfrak a}$  in $\cF^{uu}(x).$\fin
\end{corollary}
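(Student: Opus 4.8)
The plan is to read off the statement directly from the $\phi$-invariant affine structure of Proposition \ref{pro:affine}, so the work is essentially bookkeeping. First I would check that the leaf $\cF^{uu}(x)$ is invariant under $g:=\phi^{\exp\mathfrak a}$. Indeed, since $\mathfrak a \in \mathcal N$ we have $g \in N$, and by Theorem \ref{thm:inv_foliation} the foliation $\cF^{uu}$ is $N$-invariant; hence $g$ carries $\cF^{uu}(x)$ onto $\cF^{uu}(gx) = \cF^{uu}(x)$ because $gx = x$. Thus $\phi_{\mathfrak a}$ restricts to a self-diffeomorphism of $\cF^{uu}(x)$ fixing the point $x$.

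Next I would invoke the invariance of the affine structure. By Proposition \ref{pro:affine} there is a $\phi$-invariant affine structure along $\cF^{uu}$, so the restriction of $\phi_{\mathfrak a}$ to $\cF^{uu}(x)$ is an \emph{affine} diffeomorphism of that leaf. Here the codimension-one hypothesis enters: $\cF^{uu}(x)$ is one-dimensional, and by Remark \ref{rk.zero} item $(1)$ it is diffeomorphic to $\RR$; combined with the completeness of the leaves recalled at the end of the proof of Proposition \ref{pro:affine}, the developing map identifies $\cF^{uu}(x)$, together with its affine structure, with the standard complete affine line $\RR$. An affine self-diffeomorphism of $\RR$ has the form $y \mapsto \lambda y + b$ with $\lambda$ a nonzero real number, and in the affine coordinate centered at the fixed point $x$ (so that $x$ corresponds to $0$) the condition $\phi_{\mathfrak a}(x)=x$ forces $b=0$. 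Hence in this coordinate the map is exactly $y \mapsto \lambda y$, which is the asserted differential linearization, with $\lambda = D\phi_{\mathfrak a}|_{E^{uu}_x}$.

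Finally the dichotomy is immediate by inspecting $\lambda$: if $\lambda = 1$ the restriction is the identity and $\cF^{uu}(x)$ is fixed pointwise by $a$; if $\lambda \neq 1$ the only solution of $\lambda y = y$ is $y = 0$, so $x$ is the unique, hence isolated, fixed point of $\phi_{\mathfrak a}$ in $\cF^{uu}(x)$. I do not anticipate any real obstacle, since the entire substance is already contained in Proposition \ref{pro:affine}; the only point deserving a word of care is the completeness of the one-dimensional affine leaf, which is what guarantees that the affine map is globally of the form $y \mapsto \lambda y$ on all of $\RR$ rather than merely on a proper affine interval, and thereby that $\lambda=1$ truly yields pointwise fixation of the whole leaf.
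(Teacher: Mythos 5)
Your proposal is correct and is essentially the argument the paper intends: the corollary is stated with no separate proof precisely because it follows immediately from Proposition \ref{pro:affine}, exactly as you spell out (leaf invariance via Theorem \ref{thm:inv_foliation}, the $\phi$-invariant affine structure, completeness identifying the one-dimensional leaf with the standard affine line, and the trivial dichotomy for $y \mapsto \lambda y$). Your attention to completeness of the affine leaves is the right point of care, and it is the same ingredient the paper cites from Ghys.
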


\subsection{Anosov actions of codimension one are abelian}
An important corollary of the previous section is:

\begin{theorem}\label{thm:abelian}
Every codimension one Anosov action of a nilpotent Lie group is a Nil-extension over
a codimension one Anosov action of $\RR^k.$
\end{theorem}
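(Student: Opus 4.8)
The plan is to show that if $N$ is a nilpotent Lie group admitting a faithful codimension one Anosov action, then $N$ must be abelian; the Nil-extension statement then follows by quotienting out the kernel of the action. By Proposition \ref{le:trivial}, a faithful action has injective holonomy, so the strategy is to exploit the codimension-one hypothesis together with injectivity to force the commutator subgroup $[N,N]$ to act trivially, contradicting faithfulness unless $[N,N]$ is trivial.

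First I would reduce, via Lemma \ref{le:centralextension}, to the case where the action is Nil-faithful and $N$ is simply connected, so it suffices to prove that a Nil-faithful codimension one Anosov action of a simply connected nilpotent $N$ forces $N$ abelian. The key leverage is Corollary \ref{cor:linearizable}: since $E^{uu}$ is one-dimensional, for any $\mathfrak a \in \mathcal N$ fixing a point $x$ with $\phi_{\mathfrak a}(x)=x$, the restriction of $\phi_{\mathfrak a}$ to the one-dimensional leaf $\cF^{uu}(x)$ is linearizable, so it is either the identity on that leaf or has $x$ as an isolated fixed point. This dichotomy is exactly what connects the holonomy data to the algebraic structure of $N$.

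The heart of the argument would run as follows. Take a compact orbit $\cO_0$ through a point $x_0$ (these exist and are dense in $\Omega$ by Theorem \ref{thm:compact_orbit} and Proposition \ref{prop:closCompactO}), with lattice stabilizer $\Delta_{x_0}$. I would examine the action of the commutator subgroup $[N,N]$, or more precisely the top nontrivial term $N^{k}$ of the lower central series intersected with the stabilizer, on the unstable leaves. The mechanism developed in Section \ref{sec:nilpotent} (Lemmas \ref{le:xkfixed} and the subsequent analysis leading to the integer $k_i$ and the central subgroup $\Delta_i$) shows that elements of $N^{k_i}$ fixing $\cF^u(\Lambda_i)$ pointwise are forced, by connectedness of the graph $\mathfrak G$ (Lemma \ref{le:Gconnected}), to be common across all basic sets and hence to act trivially on all of $M$. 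In the codimension-one case I would use the linearization dichotomy to show that any element of $[N,N]$ stabilizing a strong unstable leaf and fixing a point must fix the whole leaf pointwise (since a nontrivial commutator, lying deep in the lower central series, cannot expand or contract the leaf—its derivative along $E^{uu}$ must be trivial because the relevant Lyapunov-type weight vanishes on $[N,N]$), thereby landing in $\mathcal H(\mathfrak h)$ and triggering the trivial-holonomy machinery. By injectivity of holonomy (Proposition \ref{le:trivial}), this can only happen if $[N,N]$ is trivial on the dense set of compact orbits, hence everywhere.

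The main obstacle I anticipate is the first step of this last paragraph: rigorously showing that a nontrivial element of $[N,N]$ fixing a point on a compact orbit acts trivially on the transverse one-dimensional unstable direction. One must verify that the linear part of its action on $\cF^{uu}(x)$ is necessarily $1$, which should follow because the one-dimensional expansion rate defines a homomorphism $N \to \RR^{+}$ (an ``unstable weight'') whose kernel contains $[N,N]$; combined with Corollary \ref{cor:linearizable}, trivial linear part forces pointwise fixing. Making this weight homomorphism precise and $\phi$-equivariant along the affine structure of Proposition \ref{pro:affine}, and ensuring it genuinely kills all of $[N,N]$ rather than merely a subgroup, is where the care is needed; once that is in place, injectivity of holonomy and faithfulness close the argument, and the Nil-extension formulation is a formal consequence of passing to $N/[N,N] \cong \RR^k$.
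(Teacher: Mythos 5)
Your overall skeleton --- reduce to the (Nil-)faithful case via Lemma \ref{le:centralextension}, work at a compact orbit, and play Corollary \ref{cor:linearizable} against injectivity of holonomy (Proposition \ref{le:trivial}) --- is exactly the paper's. But the step you yourself flag as needing care is a genuine gap, and it cannot be repaired in the form you propose: there is no ``unstable weight'' homomorphism $N \to \RR^{+}$. The expansion rate along the one-dimensional bundle $E^{uu}$ is a \emph{cocycle} over the action, $c(gh,x)=c(g,hx)\,c(h,x)$; since a general element of $N$ moves the base point, $g \mapsto c(g,x)$ is not multiplicative, so ``its kernel contains $[N,N]$'' has no meaning, and the claim that a nontrivial element of $[N,N]$ fixing a point must have trivial derivative along $E^{uu}$ is unsupported as stated.

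The repair is to restrict to the isotropy lattice, and it simultaneously shows that your whole second act (spreading pointwise triviality of $[N,N]$ over $M$ via Lemma \ref{le:xkfixed}, the integers $k_i$ and the connectedness of the graph $\mathfrak G$) is unnecessary. Fix one compact orbit $\Lambda\backslash N$ with base point $x_0$, where $\Lambda=\Delta_{x_0}$. Every $\gamma\in\Lambda$ fixes $x_0$ and preserves the one-dimensional leaf $\cF^{uu}(x_0)$, and by Corollary \ref{cor:linearizable} it acts on that leaf by a map which is linearizable in the invariant affine structure of Proposition \ref{pro:affine}; taking the linear part gives a genuine homomorphism $\Lambda \to \RR^{*}$ (on a stabilizer the cocycle identity \emph{does} become multiplicativity). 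If $\gamma$ lies in its kernel, then $\gamma$ fixes $\cF^{uu}(x_0)$ pointwise, i.e.\ has trivial holonomy, and Proposition \ref{le:trivial} forces $\gamma=e$: the homomorphism is injective. Hence $\Lambda$ embeds in the abelian group $\RR^{*}$, so $\Lambda$ is abelian; since $\Lambda$ is a lattice in the simply connected nilpotent group $N$, its Zariski closure is all of $N$ (Raghunathan, Theorem $2.10$, already invoked in the proof of Lemma \ref{le:xkfixed}), so $N$ itself is abelian, i.e.\ $N\cong\RR^{k}$. This is precisely the paper's three-line proof: a single compact orbit suffices, with no density of compact orbits, no basic sets and no graph argument. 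Note also that the final reduction is not ``pass to $N/[N,N]$'': the Nil-extension is over the quotient by the kernel of the action (Lemma \ref{le:centralextension}), and it is the group of that faithful quotient action which is then proved abelian.
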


\begin{proof}
According to Lemma \ref{le:centralextension}, such an Anosov action is a Nil-extension over a faithful Anosov action.
Consider a compact orbit $\Lambda \backslash N:$ the holonomy of the stable leaf
containing this orbit is linearizable (Corollary \ref{cor:linearizable}): it provides a morphism from $\Lambda$ into $\RR^\ast.$
According to proposition \ref{le:trivial}, this morphism is injective. The theorem follows.
\end{proof}

It follows that many results in \cite{paper1}, insensitive to Nil-extensions, still hold for Anosov actions
of nilpotent Lie groups of codimension one, not necessarily abelian. More precisely:

Let $\pi:\widetilde{M}\to M$ be the universal covering map of $M$ and $\widetilde{\phi}$ be the
 lift of $\phi$ on $\widetilde{M}$. The foliations $\cF^{ss},\ \cF^{uu},\ \cF^{s}$ and
 $\cF^{u}$ lift to foliations $\widetilde{\cF}^{ss},\ \widetilde{\cF}^{uu},\ \widetilde{\cF}^{s}$ and
 $\widetilde{\cF}^{u}$ in $\widetilde{M}$. We denote by $Q^{\phi}$ be the
 orbit space of $\tilde{\phi}$ and $\pi^{\phi}:\widetilde{M}\to Q^{\phi}$ be the
 canonical projection. We have the following properties:

 \begin{itemize}
   \item The orbits of ${\phi}$ are incompressible: every loop in a $\phi$-orbit $\cO$ which
 is homotopically non-trivial in $\cO$ is homotopically non-trivial in $M.$
   \item The  foliations $\widetilde{\cF}^{uu}$, $\widetilde{\cF}^{ss}$, $\widetilde{\cF}^{u}$,
 $\widetilde{\cF}^{s}$ and the foliation defined by $\widetilde{\phi}$ are by closed planes.
  The intersection between a leaf of $\widetilde{\cF}^{u}$ and a leaf of
 $\widetilde{\cF}^{s}$ is at most an orbit of $\widetilde{\phi}$.
 Every orbit of $\widetilde{\phi}$ meets a leaf of $\widetilde{\cF}^{uu}$ or $\widetilde{\cF}^{ss}$
 at most once.
   \item The universal covering of $M$ is diffeomorphic to $\RR^{n+k},$ and the orbit space $Q^\phi$ is is homeomorphic to $\RR^{n}.$
 \end{itemize}

Last but not least:

\begin{theorem}
\label{th:transitif}
Let $\phi$ be a codimension one Anosov action of a nilpotent Lie group $N$ of dimension $k$ on a closed manifold $M$
 of dimension $n+k$ with $n \geq 3.$ Then, $\Omega(\phi) = M$.
\end{theorem}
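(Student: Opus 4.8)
The plan is to reduce everything to Proposition \ref{pro:transitivity}: it suffices to show that every leaf of the weak stable foliation $\cF^{s}$ is dense in $M$, i.e.\ that $\cF^{s}$ is a minimal foliation. First I would put the action in a convenient form. By Theorem \ref{thm:abelian}, together with the fact that the property $\Omega(\phi)=M$ is insensitive to Nil-extensions (the fibers of a Nil-extension are compact orbits, hence nonwandering), one may assume that $\phi$ is a faithful action, indeed of the abelian group $\RR^{k}$; Proposition \ref{le:trivial} then provides that $\phi$ has injective holonomy, which I expect to use to control the transverse dynamics. The standing hypothesis $\dim E^{uu}=1$ says exactly that $\cF^{s}$ is a codimension one foliation, transverse to the one dimensional foliation $\cF^{uu}$, so I would analyze $\cF^{s}$ through its minimal sets: a codimension one foliation of a closed manifold is minimal as soon as it possesses neither a compact leaf nor an exceptional (transversally Cantor) minimal set, since minimal sets always exist and the only remaining type is $M$ itself.

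The absence of compact leaves is immediate and uses only $n\ge 2$. By Remark \ref{rk.zero}, a weak stable leaf $F$ fibers over $P=N/\Gamma_L$ with fibers the strong stable leaves, each a properly embedded copy of $\RR^{n-1}$. If $F$ were compact, such a fiber would be a closed, hence compact, subset of $F$ diffeomorphic to the noncompact $\RR^{n-1}$, which is absurd. Thus $\cF^{s}$ has no compact leaf, and the only possible obstruction to minimality is an exceptional minimal set $\mathcal M$. I would also record here, for later use, that the holonomy of $\cF^{s}$ around a compact orbit, read in the transversal $\cF^{uu}$, is \emph{expanding} (the preferred Anosov element expands $\cF^{uu}$); in particular $\cF^{s}$ carries no holonomy invariant transverse measure.

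Excluding the exceptional minimal set $\mathcal M$ is the heart of the matter, and it is exactly the step where the hypothesis $n\ge 3$ must be used: for $n=2$ the conclusion is false (Franks--Williams), and the difference is precisely that $\dim\cF^{ss}=n-1$ jumps from $1$ to $\ge 2$. Dynamically, such an $\mathcal M$ corresponds to a proper basic set of the spectral decomposition (Theorem \ref{thm:espdecomp}), for instance an attractor, which is $\cF^{uu}$-saturated but meets a local strong stable leaf $\cF^{ss}_{loc}(x)\cong\RR^{n-1}$ in a Cantor set rather than in an open set. The plan is to rule out such a transverse Cantor set inside $\RR^{n-1}$ with $n-1\ge 2$ by combining three ingredients: the transverse affine structure along $\cF^{uu}$ (Proposition \ref{pro:affine}), which forces the orbit-direction holonomy to be affine and, via Corollary \ref{cor:linearizable} and injectivity of holonomy, makes all periodic holonomy linear and expanding; the identification of $\widetilde M$ with $\RR^{n+k}$ whose weak foliations are by closed planes; and a Verjovsky type separation argument, in which the boundary leaves of a complementary region of $\mathcal M$ are higher dimensional planes $\cF^{ss}\cong\RR^{n-1}$ whose connectedness, for $n-1\ge 2$, is incompatible with bounding a transversally Cantor family under expanding transverse holonomy. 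I expect this last conversion of ``$\dim\cF^{ss}\ge 2$'' into the impossibility of the transverse Cantor set to be the genuine obstacle of the whole argument; the rest is formal. Once $\mathcal M$ is excluded, $\cF^{s}$ has no compact leaf and no exceptional minimal set, hence is minimal, and Proposition \ref{pro:transitivity} gives $\Omega(\phi)=M$.
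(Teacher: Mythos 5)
Your skeleton agrees with the paper's (reduce via Theorem \ref{thm:abelian} and the insensitivity of $\Omega(\phi)=M$ to Nil-extensions, then prove minimality of $\cF^s$ and conclude by Proposition \ref{pro:transitivity}), and your exclusion of compact leaves is fine; but the core step, minimality, is not proved. Your plan is to rule out an exceptional minimal set of the codimension one foliation $\cF^s$ by a ``Verjovsky-type separation argument'' using $\dim\cF^{ss}=n-1\geq 2$, and that is precisely the hard content of the theorem: for rank one it \emph{is} Verjovsky's theorem, and after your own reduction the acting group can perfectly well be $\RR$ even when $n\geq 3$ (e.g.\ the original action may be a non-faithful Nil-extension, such as a product of a codimension one Anosov flow on an $(n+1)$-manifold with torus translations), so a complete execution of your plan would in particular have to reprove Verjovsky's theorem. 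The sketch you give (connectedness of boundary leaves of a complementary region versus a transversally Cantor family under expanding holonomy) is not an argument, and you yourself flag it as ``the genuine obstacle.'' As written, the proposal establishes only the absence of compact leaves; the theorem is not proved.

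The paper closes this gap by a much more elementary mechanism, one you actually set up (faithfulness, hence injective holonomy by Proposition \ref{le:trivial}) but never exploit, and by \emph{citing} rather than reproving the flow case. Namely: the rank-one case is delegated to \cite{verjovsky} -- this is the only place where $n\geq 3$ enters -- and when the reduced group is $\RR^k$ with $k\geq 2$, the isotropy lattice $\Lambda\cong\ZZ^k$ of a compact orbit $\theta_0$ maps injectively (Proposition \ref{le:trivial}) into $\RR^*$ acting linearly on $\cF^{uu}(\theta_0)\cong\RR$ (Corollary \ref{cor:linearizable}); a subgroup of $\RR$ is either cyclic or dense, and a rank $k\geq 2$ subgroup cannot be cyclic, so this holonomy group is dense. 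Consequently any stable leaf meeting $\cF^{uu}(\theta_0)\setminus\{\theta_0\}$ is contained in the interior of its own closure; by Corollary \ref{cor:Mlambda} every leaf of $\cF^s$ has this property, so closures of leaves are clopen, $\cF^s$ is minimal by connectedness of $M$, and Proposition \ref{pro:transitivity} concludes. Note that this works already for $n\geq 2$ once $k\geq 2$: the hypothesis $n\geq 3$ is tied to the rank of the reduced group, not, as your plan assumes, to $\dim\cF^{ss}\geq 2$. Correspondingly, the Franks--Williams-type counterexamples with $n=2$ and $k\geq 2$ are Nil-extensions over a non-transitive flow and are never faithful, so they do not obstruct the dense-holonomy argument; your intended dichotomy on $n$ points at the wrong mechanism.
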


\begin{proof}
This is a consequence of Theorem $1$ in \cite{paper1} and of the observation that a Nil-extension of
a topologically transitive action is still topologically transitive. However, we propose here another proof, assuming
the theorem to be known for flows (cf. \cite{verjovsky}) and proving it in the case $n \geq 2.$

According to Theorem \ref{thm:abelian} one can assume that the group $N$ is $\RR^k$ with $n \geq 2.$
Then, since any subgroup of $\RR$ is either cyclic or dense, it follows from Corollary \ref{cor:linearizable} that
any stable leaf $F$ intersecting an unstable leaf ${\cF}^{uu}(\theta_0)$ of a periodic orbit
$\theta_0$ but with $\theta_0 \notin F$ is locally dense;
more precisely, $F$ is contained in the interior of the closure $\overline{F}$ of $F.$ On the other hand, it
follows from Corollary \ref{cor:Mlambda} that every leaf of $\cF^s$ satisfies this property.

Let $F$ be any leaf of $\cF^s$, and $G$ be a leaf in the closure $\overline{F}:$ this leaf
is contained in the interior of its closure $\overline{G},$ which is contained in $\overline{F}.$
Hence $G$ is contained in the interior of $\overline{F}.$ Therefore, $\overline{F}$ is closed and open, i.e. the entire $M.$
The Theorem then follows from Proposition \ref{pro:transitivity}.
\end{proof}

\subsection{Anosov actions of codimension two}
In the previous section we have seen that if $n \geq 3$, Anosov actions of codimension one are topologically transitive. In this section, elaborated with R. Var\~ao, we consider the remaining case $n=2.$ We show:

\begin{theorem}\label{thm:3}
Let $\phi$ be a codimension one Anosov action of a nilpotent Lie group $N$ of dimension $k$ on a closed manifold $M$
 of dimension $k+2.$ Then $\phi$ is a Nil-extension over an Anosov flow on a $3$-manifold.
\end{theorem}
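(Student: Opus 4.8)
The plan is to strip away the nilpotent redundancy by two soft reduction steps and then to prove the genuinely rigid statement that a \emph{faithful} action of this type is already a flow. First, by Theorem \ref{thm:abelian}, $\phi$ is a Nil-extension over a codimension one Anosov action $\phi_1$ of $\RR^k$; since Nil-extensions compose (the successive fibers assemble into a closed connected subgroup of the original nilpotent group $N$, which is again nilpotent), it suffices to treat $\phi_1$. Applying Lemma \ref{le:centralextension} to $\phi_1$, I may further assume the action is faithful, of a possibly lower-dimensional simply connected abelian group $\RR^{k'}$ on a closed manifold of dimension $k'+2$, at the cost of one more Nil-extension. Everything thus reduces to the claim: a faithful codimension one Anosov action of $\RR^{k'}$ with $n=2$ forces $k'=1$, whereupon the manifold is $3$-dimensional and the action is the asserted Anosov flow.

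For the reduced action both $\cF^{uu}$ and $\cF^{ss}$ are one-dimensional, and by Proposition \ref{pro:affine} each carries a $\phi$-invariant affine structure (apply the proposition to $\mathfrak a_0$ and to $-\mathfrak a_0$). On a compact orbit $\cO\cong\TT^{k'}$ — such orbits exist and are dense in $\Omega(\phi)$ by Proposition \ref{prop:closCompactO} — the isotropy $\Delta\cong\ZZ^{k'}$ is a full-rank lattice, and by Corollary \ref{cor:linearizable} its transverse action is linearizable, producing characters $\rho,\sigma:\Delta\to\RR^{\ast}$ recording the derivatives along $\cF^{uu}$ and $\cF^{ss}$. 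Faithfulness yields injective holonomy (Proposition \ref{le:trivial}), so $\rho$, hence the pair $(\sigma,\rho)$, is injective. Equivalently, on the orbit space $Q^{\phi}\cong\RR^2$ of $\widetilde\phi$ — a bifoliated plane, exactly as for a three-dimensional Anosov flow — the group $\Delta$ fixes a point and acts on its germ by the diagonal matrices $\mathrm{diag}(\sigma,\rho)$.

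The heart of the matter is to prove $k'=1$. Assume $k'\ge 2$. Then $(\log|\sigma|,\log|\rho|)$ extends from $\Delta$ to a pair of linear functionals $(L^s,L^u)$ on $\RR^{k'}$ that is injective on $\Delta$; in particular its image is not contained in a line, so it meets the open quadrant $\{L^u>0,\;L^s>0\}$. This furnishes an element expanding both transverse directions, i.e.\ an Anosov element whose strong stable bundle is trivial and whose weak stable foliation is the orbit foliation itself; fed into Corollary \ref{cor:Mlambda} this forces $\Omega(\phi)=M$. The main obstacle is then to derive a contradiction from the coexistence, in the two-dimensional orbit space $Q^{\phi}\cong\RR^2$, of a point whose stabilizer is a rank-$\ge 2$ group of commuting diagonal maps with multiplicatively independent eigenvalues (a transversally expanded compact orbit torus) together with the proper discontinuity and cocompactness of the $\pi_1(M)$-action on $\widetilde M\cong\RR^{k'+2}$. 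This is precisely the rigidity underlying Arakawa's theorem: higher-rank transverse behaviour is a priori conceivable, and the only leverage available is that the orbit space is \emph{exactly} two-dimensional (the hypothesis $n=2$) and that compact orbits are dense. Once $k'=1$ is established, the reduced action is an Anosov flow on a closed $3$-manifold, and unwinding the two Nil-extensions of the first step exhibits $\phi$ as a Nil-extension over it, as required.
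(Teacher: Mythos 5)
Your reduction is sound and matches the paper's: Theorem \ref{thm:abelian} (together with Lemma \ref{le:centralextension}) reduces everything to a \emph{faithful} codimension one Anosov action of $\RR^{k'}$ on a closed manifold of dimension $k'+2$, and the goal is to rule out $k'\geq 2$. You also set up the same objects the paper uses: the lattice $\Delta\cong\ZZ^{k'}$ stabilizing a lift $\theta_0$ of a compact orbit, the linearization of its transverse action (Corollary \ref{cor:linearizable}), the characters $(\rho_1,\rho_2)=(\sigma,\rho)$, and their injectivity via Proposition \ref{le:trivial}. But from that point on the argument has a genuine gap, in fact two flawed steps followed by an explicit non-step. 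First, injectivity of $(\log|\sigma|,\log|\rho|)$ on $\Delta$ does \emph{not} imply that its image is not contained in a line: a rank $k'\geq 2$ subgroup of $\RR^2$ can perfectly well lie (densely) on a line, e.g.\ on $\{x+y=0\}$, which is exactly the transversally volume-preserving case $\rho_1(\gamma)\rho_2(\gamma)\equiv 1$; this case is fully compatible with injectivity, so your route to an element of the open quadrant $\{L^s>0,\,L^u>0\}$ breaks down. Second, even granting such an element of $\Delta$, expansion of both transverse directions \emph{at one compact orbit} does not make it an Anosov element of $\phi$ with trivial strong stable bundle: being Anosov is a global normal hyperbolicity condition, not a property of the linearization at a single orbit. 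Third, and decisively, you then state that ``the main obstacle is to derive a contradiction'' from the higher-rank stabilizer acting on $Q^\phi\cong\RR^2$ --- which is precisely the theorem --- and you do not derive it; the phrase ``this is precisely the rigidity underlying Arakawa's theorem'' is a placeholder, not a proof. (Note also that the intermediate conclusion you aim for, $\Omega(\phi)=M$, is not by itself contradictory.)

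For comparison, the paper closes exactly this gap with a short dichotomy that your setup was one step away from. Let $\rho(\Lambda)\subset\RR^2$ be the image of $\gamma\mapsto(\log\rho_1(\gamma),\log\rho_2(\gamma))$; by injectivity it has rank $k'\geq 2$, so it is either a lattice in $\RR^2$ or a non-closed subgroup. If it is a lattice, the linearized $\Delta$-orbit of every point $\theta'\neq\theta_0$ near $\theta_0$ accumulates on $\theta_0$ (take group elements with both coordinates tending to $-\infty$); if it is non-closed, the $\Delta$-orbit of every such $\theta'$ accumulates non-trivially on $\theta'$ itself. In either case no point of a punctured neighborhood of $\theta_0$ can be the lift of a compact orbit, so by density of compact orbits in basic sets (Proposition \ref{prop:closCompactO}, Theorem \ref{thm:espdecomp}) the basic set through the projection of $\theta_0$ is a single compact orbit. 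Since $\theta_0$ was an arbitrary lift of an arbitrary compact orbit, $\Omega(\phi)$ is a \emph{finite} union of compact orbits, and then Corollary \ref{cor:Mlambda} forces $\cF^s$ to have only finitely many leaves --- a contradiction. So the paper uses Corollary \ref{cor:Mlambda} in the opposite direction from yours: not to show $\Omega=M$, but to show that $\Omega$ cannot be too small. You should replace the quadrant argument by this dichotomy; everything before it in your proposal can be kept.
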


This Theorem is a direct consequence of Theorem \ref{thm:abelian} and the main result of V. Arakawa in his (unpublished) Ph. D Thesis (\cite{arakawa}).
However, we propose here a proof much shorter than the one appearing in \cite{arakawa}.

\begin{proof}
By a way of contradiction, assume that we have a codimension one Anosov action of a nilpotent Lie group $N$ of dimension $k$ on a closed manifold $M$
of dimension $k+2$ that is not a Nil-extension over an Anosov flow on a $3$-manifold. By Theorem \ref{thm:abelian} one can assume that $N$
is $\RR^k$ with $k \geq 2.$ Let $\theta_0$ be an element of the orbit space $Q^\phi$ which is a lift of a compact orbit: $\theta_0$ is fixed
by a subgroup $\Lambda$ of $\pi_1(M)$ isomorphic to a lattice in $\RR^k,$ i.e. isomorphic to $\ZZ^k.$ According to Corollary \ref{cor:linearizable}
and the local product structure near $\theta_0,$ the action of $\Lambda$ in $Q^{\phi}$ near $\theta_0$ is linearizable. More precisely:
there is a neighborhood $U$ of $\theta_0$ in $Q^{\phi},$ morphisms $\rho_{1,2}: \Lambda \to ]0, +\infty[$ and a homeomorphism $\psi: U \to V \subset \cG^s(\theta_0) \times \cG^u(\theta_0) \approx \RR^2$ such that:
$$ \forall \gamma \in \Lambda, \forall \theta \in U \;\; \psi(\gamma.\theta) = (e^{\rho_1(\gamma)}\psi_1(\theta), e^{\rho_2(\gamma)}\psi_2(\theta))$$
where $\psi(\theta) = (\psi_1(\theta), \psi_2(\theta)).$

We denote by $\rho(\Gamma)$ the subgroup of $\RR^2$ comprising elements of the form $(\rho_1(\gamma), \rho_2(\gamma))$ with $\gamma \in \Lambda.$
We have two cases to consider:

\begin{enumerate}
  \item Either $\rho(\Gamma)$ is discrete in $\RR^2$ (hence a lattice since $k \geq 2$),
  \item Or $\rho(\Gamma)$ is not closed in $\RR^2.$
\end{enumerate}

In the first case, $\theta_0$ lies in the closure of the orbit under $\Lambda$ of any point in $U \setminus \{ \theta_0 \}.$ In
the second case, the orbit under $\Lambda$ of any point $\theta'$ in $U \setminus \{ \theta_0 \}$ accumulates non-trivially to $\theta'.$

In both situations, we conclude that no element of $U \setminus \{ \theta_0 \}$ can be a lift of a compact orbit of $\phi$. It follows
that the basic set containing the projection of $\theta_0$ is reduced to one compact orbit.

Since $\theta_0$ was arbitrary, we conclude that the nonwandering set $\Omega(\phi)$ is a finite union of compact orbits. According
to Corollary \ref{cor:Mlambda} the foliation $\cF^s$ has only finitely many leaves: contradiction.
\end{proof}

%%%$\mathscr{A B C D E F G H I J K L M N O P Q R S T U V X Y W Z}$
%%%$\mathcal{F} \  F   $

\end{document}